\DeclareMathOperator{\diag}{diag}
\def\ba{\begin{array}}
	\def\ea{\end{array}}
\newcommand{\beq}{\begin{align}}
	\newcommand{\eeq}{\end{align}}
\newcommand{\bq}{\begin{eqnarray}}
	\newcommand{\eq}{\end{eqnarray}}
\newcommand{\bqn}{\begin{eqnarray*}}
	\newcommand{\eqn}{\end{eqnarray*}}
\newcommand{\bee}{\begin{enumerate}}
	\newcommand{\eee}{\end{enumerate}}
\newcommand{\bi}{\begin{itemize}}
	\newcommand{\ei}{\end{itemize}}
\newcommand{\btab}{\begin{tabular}}
	\newcommand{\etab}{\end{tabular}}
\newtheorem{theorem}{Theorem}
\newtheorem{definition}{Definition}
\newtheorem{lemma}{Lemma}
\newtheorem{corollary}{Corollary}
\newtheorem{remark}{Remark}
\newtheorem{assumption}{Assumption}
\newcommand{\hL}{\mathcal{L}}
\newcommand{\hN}{\mathcal{N}}
\newcommand{\cN}{{\cal N}}
\newcommand{\BEAS}{\begin{eqnarray*}}
\newcommand{\EEAS}{\end{eqnarray*}}
\newcommand{\BEA}{\begin{eqnarray}}
\newcommand{\EEA}{\end{eqnarray}}
\newcommand{\BEQ}{\begin{equation}}
\newcommand{\EEQ}{\end{equation}}
\newcommand{\BIT}{\begin{itemize}}
\newcommand{\EIT}{\end{itemize}}
\newcommand{\BNUM}{\begin{enumerate}}
\newcommand{\ENUM}{\end{enumerate}}
\begin{document}
	
	\title{\LARGE{Signal-Anticipat{ion} in Local Voltage Control in Distribution Systems}}
	
	\author{Zhiyuan Liu, Seungil You, Xinyang Zhou, Guohui Ding and Lijun Chen
	 \thanks{Z. Liu, G. Ding and L. Chen are with the College of Engineering and Applied Science, University of Colorado, Boulder, CO 80309, USA (emails: \{zhiyuan.liu, guohui.ding, lijun.chen\}@colorado.edu).}
		\thanks{S. You is with Kakao Mobility (email: euclid85@gmail.com).}
		\thanks{X. Zhou is with National Renewable Energy Laboratory, Golden, CO 80401, USA (email: xinyang.zhou@nrel.gov).}
		\thanks{Preliminary result of this paper has been presented at IEEE International Conference on Smart Grid Communications, Venice, Italy, 2014 \cite{Chen-2014-SA}.}
		\vspace{-2mm}
}
	\maketitle
	\pagestyle{plain}
	
	\begin{abstract}
    	
    	We consider the signal-anticipating behavior in local Volt/Var control for distribution systems. {Such a behavior makes interaction among the nodes a game.} We characterize Nash equilibrium of the game as the optimum of a global optimization problem and establish its asymptotic global stability. { We also show that the signal-anticipating voltage control has less restrictive convergence condition than the signal-taking control.} We then introduce the notion of Price of Signal-Anticipat{ion} (PoSA) to characterize the impact of signal-anticipating control, and use the gap in cost between network equilibrium in the signal-taking control and Nash equilibrium in the signal-anticipating control as the metric for PoSA. We characterize how the PoSA scales with the size, topology, and heterogeneity of the {distribution} network for a few network settings. 
Our results show that the PoSA is upper bounded by a constant and the average PoSA per node will go to zero as the size of the network {increases.} This is desirable as it means that the PoSA will not be arbitrarily large, no matter what the size of the network is, and no mechanism is needed to mitigate the signal-anticipating behavior. {We further carry out numerical experiments with a real-world distribution circuit to complement the theoretical analysis.}
 \end{abstract}
    
    \begin{keywords}
Signal-anticipating, efficiency loss, voltage control game, distributed control, voltage regulation, power networks. 
\end{keywords}


	\section{Introduction}\label{sect:intro}
	
	Motivated by the proposed IEEE 1547.8 standard \cite{New1547}, we have studied in \cite{Farivar13,zhou2018reverse} a class of inverter-based local Volt/Var control schemes in distribution systems that set the reactive power at the output of an inverter based only on the local voltage deviation from  its nominal value at a node/bus. 
	 We have shown that the resulting  dynamical system with such local voltage control  
	has a unique equilibrium point, characterized it as the unique 
	optimum of a well-defined global optimization problem, and established its global stability. See Section \ref{sect:model} for a brief review of the related results.  
	

	These Volt/Var control schemes  take the feedback signal, i.e., the voltage, as given, and therefore the nodes/buses can be seen as being {\em signal-taking}. However,  a node 
	may be able to learn or infer the impact of its own decision on the feedback signal, and take it into
	consideration when making the control decision on reactive power, which we call the {\em signal-anticipating} voltage control. In this paper, we study such a 
	signal-anticipating behavior. Specifically, the signal-anticipating control makes the interaction
	among the nodes a game, which we call the voltage control game. We show that the signal-anticipating voltage control is the best response algorithm of the voltage control game, 
	and its fixed point is the Nash equilibrium of the voltage control game and vice versa. We further show that the voltage control game has a unique 
	Nash equilibrium, characterize it as the optimum of a global optimization problem, and establish its asymptotic global stability under the signal-anticipating voltage control. {We also show that the signal-anticipating voltage control has less restrictive convergence condition than the signal-taking control.}

	The signal-taking and signal-anticipating behaviors in local voltage control are analogous to the price-taking and price-anticipating behaviors in economics \cite{MWG,nisan2007algorithmic}. It is well-known that in a competitive market with price-taking customers the system achieves an efficient equilibrium and in an oligopolistic market with price-anticipating customers the system usually incurs efficiency loss. Similarly, we consider that the signal-taking behavior leads to an efficient equilibrium while the signal-anticipating behavior may result in efficiency loss.
	 Specifically, we introduce the notion of the {\em price of signal-anticipat{ion}} (PoSA) to characterize the impact of the signal-anticipat{ion} in local voltage control in particular and in distributed control in general. We use the gap in cost between the network equilibrium in the signal-taking voltage control and the Nash equilibrium in the signal-anticipating voltage control as the metric for PoSA, and  characterize how it scales with the size, topology, and heterogeneity of the {distribution} network. 
	 
	 In particular, we give an upper bound on the PoSA, and show that it is bounded by a constant that is independent of the size of the network. We also show numerically that the upper bound is very tight for the linear networks and randomly generated tree networks when the size of the network is large. Further, for the linear networks, we give a refined characterization of the upper bound. This refinement is based on an analytical form for the inverse of the reactance matrix (or voltage to reactive power sensitivity matrix) we discover.\footnote{{Although it is not directly relevant to the goals of this work,  the discovery of the analytical form for the inverse of the reactance matrix is itself a notable contribution; see the Appendix. In \cite{zhou2018reverse} we use it  to understand the structural properties of the local voltage control in distribution systems.}} {We then carry out numerical experiments with a 42-bus distribution feeder of Southern California Edison (SCE) to examine the efficiency loss as well as the convergence of the signal-anticipating voltage control. All the numerical results confirm the afore-mentioned theoretical analysis.}

%
	
	To our knowledge, this paper is the first to study the signal-anticipating behavior and introduce the notion of the price of signal-anticipat{ion}. {The signal-anticipating behavior arises in the setting of distributed control, where network nodes or users have only limited and partial information about the network due to various practical constraints and expect to make the best use of the information in order to, e.g., improve the stability of the system or optimize individual objectives. For example, one motivation for the signal-anticipating local voltage control is that adapting to the expected voltage rather than the current voltage may result in better convergence property, as confirmed by Theorem~\ref{theorem:5}. Besides this engineering perspective, there is a complementary economic perspective: The signal-anticipating behavior arises from the network nodes or users being self-interested and strategic, the same way as the price-anticipating behavior arising in an oligopolistic market. In this paper, we take more of an engineering perspective, even though we use an individual objective function to guide a node in taking into consideration the impact of its own decision (see Equation \eqref{eq:sa}). } 
	
	{An understanding and characterization of the signal-anticipating behavior} will be insightful in designing mechanisms to mitigate its impact if it is not desired. Regarding the signal-anticipating behavior in local voltage control in distribution systems, our results show that the PoSA is upper bounded by a constant, independent of the size of the network, and the ``average'' PoSA per node will go to zero as the size of the network {increases}. This is desirable as it means that the PoSA will not be arbitrarily large, no matter what the size of the network is, and no mechanism is needed to mitigate the signal-anticipating behavior. 
	
	
	{\em Related work}: Voltage control is a research area with a huge literature. Traditional approach to voltage control in distribution systems is via capacitor banks and under load tap changers; see, e.g., \cite{Baran1989a, Baran1989b, ChiangBaran1990}. 
	The new inverter-based approach that can control reactive power much faster and in a much finer granularity has been proposed and studied in, e.g., 
		\cite{Turitsyn11, Smith2011, simpson2017voltage, zhu2016fast} with local control,	\cite{vsulc2014optimal, zhou2017incentive, zhou2017stochastic, li2014real, zhou2018hierarchical} with distributed control, and \cite{Farivar2011-VAR-SGC, dall2014optimal, kekatos2015stochastic} with centralized control.
	 The local voltage control based on real-time voltage measurement has also been proposed for transmission systems; see, e.g., \cite{Ilic1988}. 
    	
	The price-anticipating behavior has been studied for engineering systems too; see, e.g.,  \cite{feldman2005price,johari2006scalable}  for network resource allocation and \cite{chen2010two,samadi2012advanced,li2015demand,fan2012distributed} for demand management in smart grids. More generally, efficiency loss arising from strategic or self-interested behaviors has ben studied using different metrics such as the price of anarchy \cite{nisan2007algorithmic} for various systems including power systems and communication networks; see, e.g., \cite{johari2004efficiency,wu2006price,haviv2007price, chen2009impact,johari2011parameterized,altman2011load,chen2015interaction,li2015demand}, to just name a few. 
	
	{
	A comparison with our preliminary work \cite{Chen-2014-SA} is in place.  This paper has significantly extended and improved upon \cite{Chen-2014-SA} as follows.  First, the current proof of convergence of the signal-anticipating voltage control uses the Contraction Mapping Theorem instead of the Lyapunov Stability Theorem, and can handle a more general problem setting with non-smooth cost functions.  
Further, we have also showed that the convergence condition for the signal-anticipating voltage control is less restrictive than that of the  signal-taking control. 
Second, in  \cite{Chen-2014-SA} the analytical characterization of PoSA is very simple and for the extreme cases with very large provisional cost or very large voltage deviation cost, and the numerical examples are for the simple two-link network and linear network with all power lines having the same reactance. In contrast, our current analytical characterization of PoSA is much more complicated and thorough:  1) derive an upper bound on PoSA for arbitrary tree networks and investigate its tightness in view of a lower bound, and 2) derive a refined and closed form for the upper bound for general linear networks with arbitrary line reactances. Third, in addition to many numerical examples that are used to complement the analytical results, in this paper we have also carried out numerical experiments with a real-world distribution feeder. 
	}

	\begin{table}[htbp]
		\begin{small}
			\caption{Notations}
			\label{table: symbol}
			\begin{center}
				\begin{tabular}{ll}
					\hline
					$\hN$ & \!\!\!\!Set of nodes, excluding node 0, indexed as \{1,2,$\cdots$, \!n\}\\
					$\hL$ & \!\!\!\!Set of all links representing power lines\\
					$\hL_i$ & \!\!\!\!Set of links on the path from node 0 to node i \\ 
                    $p_i^c, p_i^g$ &\!\!\!\!Real power consumption and generation at node $i$\\
					$q_i^c, q_i^g$& \!\!\!\!Reactive power consumption and generation at node $i$\\
					$r_{ij}, x_{ij}$ & \!\!\!\!Resistance and reactance of line $(i,j)$\\
					$P_{ij}, Q_{ij}$ & \!\!\!\!Real and reactive power flows from $i$ to $j$\\
					$v_i$ & \!\!\!\!Magnitude of complex voltage at node $i$ \\
					$\ell_{ij}$ &\!\!\!\!Squared magnitude of complex current on $(i,j)$ \\
					$\mathbb{L}$  &\!\!\!\!Weighted Laplacian matrix of undirected graph\\
					$\lambdaup,\lambdaup_{\max},\lambdaup_{\min}$ &\!\!\!\!Any, maximum, minimum eigenvalues of matrix \\
					$\sigma_{\max}$ &\!\!\!\!Maximum singular value of matrix \\
					\hline
				\end{tabular}
			\end{center}
		\end{small}
	\end{table}

		\vspace{-2mm}
	\section{Network Model and Signal-Taking Voltage Control}\label{sect:model}


	Consider a tree graph $\mathcal{T}=\{\hN \cup\{0\}, \hL\}$ that represents 
	a radial distribution network consisting of $n+1$ nodes and a set $\hL$ of lines between these nodes. Node 0 is the substation node and assumed to have a fixed voltage.  
	For each node $i\in \hN$, denote by $\hL_i \subseteq \hL$ the set of lines on the unique path
	from node $0$ to node $i$, $p_i^c$ and $p_i^g$ the real power consumption and generation,
	and  $q_i^c$ and $q_i^g$  the reactive power consumption and generation, respectively. 
	Let $v_i$ be the magnitude of the complex voltage at node $i$.	For each line $(i, j)\in \hL$, denote by $r_{ij}$ and $x_{ij}$ its resistance and reactance,  and $P_{ij}$ and $Q_{ij}$  the real and reactive power from node $i$ to node $j$, respectively. 
	Let   $\ell_{ij}$ denote the squared magnitude of the complex branch current from node $i$ to node $j$.
	These notations are summarized in Table \ref{table: symbol}. A quantity without subscript is usually a vector with appropriate components defined earlier; 
	e.g., $v := [v_i, i\in\hN]^{\top}, q^g := [q_i^g, i\in \hN]^{\top}$.

	\subsection{Linearized Branch Flow Model} 
	

	We adopt the following branch flow model  introduced in \cite{Baran1989a, Baran1989b} to model a {radial} distribution system:
	\begin{subequations}\label{eq:bfm}
		\begin{eqnarray}
		\hspace{-3mm}P_{ij} &=& p_j^c - p_j^g + \sum_{k: (j,k)\in \hL} P_{jk}+  r_{ij}  \ell_{ij}  \label{p_balance}, \\
		\hspace{-3mm}Q_{ij} &=&  q_j^c-q_j^g + \sum_{k: (j,k)\in \hL} Q_{jk} + x_{ij} \ell_{ij} \label{q_balance},\\
		\hspace{-3mm}v_j^2 &=&  v_i^2 - 2 \left(r_{ij} P_{ij} + x_{ij} Q_{ij}\right) + \left(r_{ij}^2+x_{ij}^2\right) \ell_{ij} \label{v_drop},\\
		\hspace{-3mm}\ell_{ij}v_i^2 &=&   P_{ij}^2 + Q_{ij}^2  \label{currents}.
		\end{eqnarray}
	\end{subequations}
	
	\begin{figure}[ht!]
		\centering
		\includegraphics[scale = 0.2]{./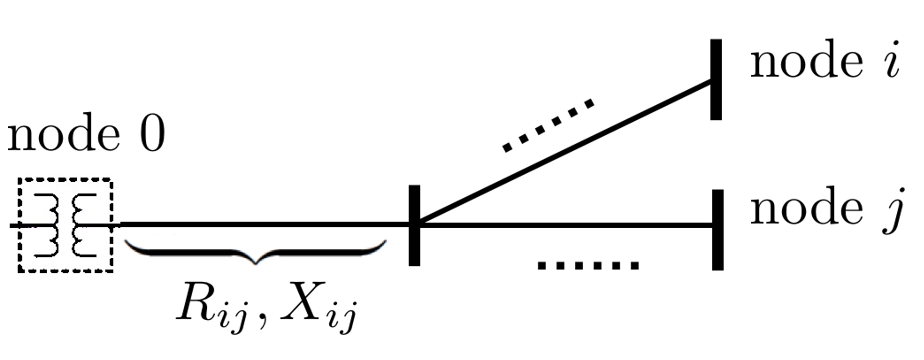}
		\caption{$\hL_{i} \cap \hL_{j}$ for two nodes $i,~j$ and the corresponding mutual voltage-to-power-injection sensitivity factors $R_{ij}$ and $X_{ij}$.}
			\label{fig:RX}
	\end{figure}

	Following \cite{Baran1989c}, we have introduced in \cite{Farivar13,zhou2018reverse} a resistance matrix $R=[R_{ij}]_{n\times n}$ with $R_{ij}:=  \sum_{(h,k)\in \hL_i \cap \hL_j}r_{hk}$ and a reactance matrix $X=[X_{ij}]_{n\times n}$ with $X_{ij}:=   \sum_{(h,k)\in \hL_i \cap \hL_j}x_{hk}$ (see illustration in Fig. \ref{fig:RX}), 
		and derived from \eqref{eq:bfm} a linearized branch flow model: 
	\begin{equation*}  
	v = \overline{v}_0 + R (p^g - p^c) + X(q^g - q^c),
	\end{equation*}
	where $\overline{v}_0 = [v_0, \dots, v_0]^{\top}$ is an $n$-dimensional vector. We assume that $\overline{v}_0, p^c, p^g, q^c$ are given constants, and voltage magnitudes $v := [v_1, \dots, v_n]^{\top}$ and reactive powers $q^g := [q^g_1, \dots, q^g_n]^{\top}$ are the only variables.
	Let  $\tilde{v} = \overline{v}_0 + R (p^g - p^c) - X q^c$, a constant vector.
	For notational simplicity, in the rest of the paper we will ignore the superscript in $q^g$ and write $q$ instead. Then the linearized branch flow model reduces to the following:
	\begin{eqnarray}  \label{model_2}
	v = Xq + \tilde{v}.
	\end{eqnarray}
	We have shown in \cite{Farivar13,zhou2018reverse} that $X$ is positive definite.
	\subsection{The Signal-Taking Volt/Var Control} \label{sect:st}
	The goal of Volt/Var control on a distribution network is to provision reactive power injections $q := [q_1, \dots, q_n]^{\top}$ in order to maintain the node voltages $v := [v_1, \dots, v_n]^{\top}$ to within a tight range around their nominal values $v^{\text{nom}}_i, ~i\in\hN$. In \cite{Farivar13,zhou2018reverse}, we have considered one type of local Volt/Var controls where each node $i$
	makes an individual decision  $q_i(t+1)$ based only on its own voltage 
	$v_i(t)$: 
	\begin{eqnarray} \label{VVC_function}
	q_i(t+1)  \ = \ \left[f_i (v_i(t) - v_i^{\text{nom}} )\right]_{\Omega_i},\quad \forall i\in \hN,
	\end{eqnarray}
	where  $f_i: \mathbb{R}\rightarrow  \mathbb{R}$, 
	$\Omega_i = \left\{q_i  \ | \ {q_i}^{\min}  \leq q_i \leq {q_i}^{\max}\right\}$ the set of feasible reactive power injections at node $i$, and $\left[\ \right]_{\Omega_i}$ denotes the projection onto the set $\Omega_i$. This leads to the following feedback dynamical system for the distribution network:
	\begin{subequations}
		\begin{align}
			v(t) & =  Xq(t) + \tilde{v},
			\label{eq:dynamic1}
			\\
			q(t+1) & =  \left[ f(v(t) - v^{\text{nom}})\right]_\Omega,
			\label{eq:dynamic2}
		\end{align}
	\end{subequations}
where $f : \mathbb{R}^{n} \rightarrow  \mathbb{R}^{n} $ denotes the collection of $[f_i, i\in\hN]^{\top}$, with $\Omega =  \times_{i\in\hN} \Omega_i$. A fixed point of the above dynamical system represents an equilibrium operating point of the network. 
	\begin{definition}~{\em (Definition 2 in \cite{Farivar13,zhou2018reverse})}~
		$\left({v}^*, q^* \right)$ is called an {equilibrium point}, or a {network 
			equilibrium}, if it is a fixed point of \eqref{eq:dynamic1}--\eqref{eq:dynamic2}, i.e., 
		\bqn
		v^* & = &  Xq^* + \tilde{v},
		\\
		q^* & = & \left[ f(v^* - v^{\text{nom}})\right]_{\Omega}.
		\eqn
	\end{definition}
	
	Consider that for each node $i\in\hN$ there is a symmetric deadband 
	$[-\delta_i/2, \delta_i/2]$ around the origin with $\delta_i\geq 0$ in the  control function $f_i$, i.e., $f_i(u_i)=0$ for $u_i\in [-\delta_i/2, \delta_i/2]$. 
	We make the following assumptions:
	\begin{assumption} \label{A1}
		The local Volt/Var control functions $f_i$ are nonincreasing over $\mathbb{ R}$, strictly decreasing, and differentiable in $(-\infty,  - \delta_i/2) \cup (\delta_i/2, \infty)$.
	\end{assumption}
	\begin{assumption}\label{A2}
		The derivative of each control function $f_i$ is bounded, i.e., there exists a finite $\alpha_i$ such that $|f_i'(u_i)| \leq \alpha_i$ for all $u_i$ in the appropriate domain, for all $i\in \hN$.
	\end{assumption}

	
	Define a cost function for each node $i\in\hN$:
	\bqn
	C_i(q_i)  & := & - \int_{0}^{q_i} f_i^{-1}(q) \, dq.
	\eqn
	{Function $f_i$ is decreasing by Assumption \ref{A1}, so is its inverse $f_i^{-1}$. Therefore, $C_i$ is convex by the second order condition of convex functions.
	} Given any $v_i(t)$,  $q_i(t+1)$ in \eqref{eq:dynamic2} is the unique
	solution to an individual decision problem of node $i\in\hN$:
	\bq
	q_i({t+1})  =  \arg\ \ \!\!\! \min_{ q_i \in \Omega_i} 
	\, C_i(q_i) + q_i \left( v_i(t) - v_i^{\text{nom}} \right).
	\label{eq:dynamic2b}
	\eq
	Notice that, in the decision problem \eqref{eq:dynamic2b}, each node $i$ takes the feedback signal $v_i(t)$ as given, and is therefore considered as being {\em signal-taking}. We thus call \eqref{eq:dynamic2} and \eqref{eq:dynamic2b} the {\em signal-taking} voltage control. 

	We have the following results regarding the equilibrium and dynamic properties of the signal-taking voltage control. 
	\begin{theorem}~{\em (Theorem 1 in \cite{Farivar13,zhou2018reverse})}~
		\label{thm:eq}
		Suppose Assumption \ref{A1} holds. Then there exists a unique equilibrium point.
		Moreover, a point $(v^*, q^*)$ is an equilibrium if and only if $q^*$ is the unique
		optimal solution of the following global optimization problem:
		\bq
		\min_{q\in \Omega} & & F(q) = \sum_{i\in\hN}C_i(q_i) + \frac{1}{2}q^{\top} X q + q^{\top} \Delta \tilde{v}
		\label{eq:defminF}
		\eq
		with $\Delta \tilde{v} := \tilde{v} - v^{\text{nom}}$ and $v^* = Xq^* + \tilde{v}$.\hfill$\Box$
	\end{theorem}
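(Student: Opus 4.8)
The plan is to exploit the convexity structure already assembled just before the statement, in three moves.

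\textbf{Step 1: $F$ has a unique minimizer over $\Omega$.} Each $C_i$ is convex (as noted right before the theorem), the quadratic term $\tfrac12 q^\top X q$ is strictly convex because $X$ is positive definite, and $q^\top\Delta\tilde v$ is affine; hence $F$ is strictly convex on $\mathbb{R}^n$. Since $\Omega=\times_{i\in\hN}\Omega_i$ is closed and convex and $F$ is coercive (again using that $X$ is positive definite, each $C_i$ being bounded below by an affine minorant), $F$ attains its infimum over $\Omega$ at exactly one point $q^*$. Set $v^*:=Xq^*+\tilde v$.

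\textbf{Step 2: decouple the first-order optimality condition.} By convexity, $q^*$ minimizes $F$ over $\Omega$ if and only if $0\in\partial F(q^*)+N_\Omega(q^*)$, where $N_\Omega$ is the normal cone. Because $\Omega$ is a Cartesian product of the intervals $\Omega_i$ and the only nonsmoothness of $F$ sits in the separable term $\sum_i C_i$ while the coupling through $X$ is smooth, this condition decouples coordinatewise: for every $i\in\hN$, $0\in\partial C_i(q_i^*)+(Xq^*)_i+\Delta\tilde v_i+N_{\Omega_i}(q_i^*)$, i.e.\ $0\in\partial C_i(q_i^*)+(v_i^*-v_i^{\text{nom}})+N_{\Omega_i}(q_i^*)$. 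This is precisely the optimality condition for $q_i^*$ to be the minimizer over $\Omega_i$ of the single-node objective $C_i(q_i)+q_i(v_i^*-v_i^{\text{nom}})$ appearing in \eqref{eq:dynamic2b}.

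\textbf{Step 3: match to the equilibrium map and conclude both directions.} The text preceding the theorem records that the unique solution of \eqref{eq:dynamic2b} at voltage level $v_i$ is $[f_i(v_i-v_i^{\text{nom}})]_{\Omega_i}$. Applying this with $v_i=v_i^*$, Step 2 yields $q_i^*=[f_i(v_i^*-v_i^{\text{nom}})]_{\Omega_i}$ for all $i$, i.e.\ $q^*=[f(v^*-v^{\text{nom}})]_\Omega$ together with $v^*=Xq^*+\tilde v$; by the definition of equilibrium point, $(v^*,q^*)$ is an equilibrium. Conversely, if $(\bar v,\bar q)$ is any equilibrium then $\bar q_i=[f_i(\bar v_i-v_i^{\text{nom}})]_{\Omega_i}$ with $\bar v=X\bar q+\tilde v$, so $\bar q_i$ solves \eqref{eq:dynamic2b} at $\bar v_i$; running the equivalences of Step 2 backward shows $\bar q$ satisfies $0\in\partial F(\bar q)+N_\Omega(\bar q)$, which for the convex $F$ is sufficient for global optimality, so $\bar q=q^*$ and $\bar v=v^*$. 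Hence an equilibrium exists, is unique, and its reactive-power component is the unique optimizer of \eqref{eq:defminF}.

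I expect Step 2 to demand the most care: since $C_i$ has a kink on the deadband $[-\delta_i/2,\delta_i/2]$, the optimality condition must be phrased with subdifferentials and normal cones rather than gradients, and one must check that the product structure $N_\Omega=\times_i N_{\Omega_i}$ and $\partial(\sum_i C_i)=\times_i\partial C_i$ lines up exactly with the per-node problem \eqref{eq:dynamic2b}, with the coupling entering only through the linear term $(Xq^*)_i$. The rest — coercivity, strict convexity, and the two directions of the ``if and only if'' — is routine once this correspondence is established, and Assumption~\ref{A2} is not needed.
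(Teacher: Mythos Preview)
The paper does not supply its own proof of this theorem; it is quoted verbatim as ``Theorem~1 in \cite{Farivar13,zhou2018reverse}'' and closed with a $\Box$ immediately after the statement. So there is nothing in the present paper to compare your argument against.

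That said, your proposal is correct and is exactly the standard route used in the cited sources: strict convexity of $F$ (from $X\succ 0$) gives a unique minimizer; the separable-plus-smooth structure lets the first-order optimality condition decouple into the per-node conditions of \eqref{eq:dynamic2b}; and the pre-stated equivalence between \eqref{eq:dynamic2b} and the control map $[f_i(\cdot)]_{\Omega_i}$ closes the ``if and only if''. Two small remarks: (i) coercivity is not needed here because each $\Omega_i=[q_i^{\min},q_i^{\max}]$ is compact, so existence follows from continuity alone; (ii) your care with subdifferentials/normal cones in Step~2 is the right way to handle the deadband kink in $C_i$, and is indeed where the only nontrivial bookkeeping sits.
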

	{
		
	With $v= Xq + \tilde{v}$, the objective function $F(q)$ can be equivalently written as:
	\begin{equation*}
		F(q) = \sum_{i\in\hN}C_i(q_i) + \frac{1}{2}(v-v^{\text{nom}})^{\top} X^{-1} (v-v^{\text{nom}}) -\frac{1}{2}\Delta\tilde{v}^{\top}X^{-1}\Delta\tilde{v}, 
	\end{equation*}
	where the last term is a constant. Therefore, the signal-taking local voltage control \eqref{eq:dynamic1}--\eqref{eq:dynamic2} seeks an optimal trade-off between minimizing the cost of reactive power provisioning $\sum_{i\in\hN}C_i(q_i)$ and minimizing the cost of voltage deviation $\frac{1}{2}(v-v^{\text{nom}})^{\top} X^{-1} (v-v^{\text{nom}})$.
	}

	\begin{theorem}~{\em (Corollary 1 in \cite{zhou2018reverse})}~
	\label{thm:con}
	Suppose Assumptions \ref{A1} and \ref{A2} hold. If
	\begin{equation} \label{eq:con}
	\begin{aligned}
		\sigma_{\max}(\mathcal{A}X) < 1,
	\end{aligned}
	\end{equation}
	where $\mathcal{A}\!:=\! \diag \{\alpha_i\}_{i\in\hN}$, then the signal-taking Volt/Var control \eqref{eq:dynamic1}--\eqref{eq:dynamic2} converges to the 
	unique equilibrium point $(v^*, q^*)$. Moreover, it converges exponentially fast to the equilibrium.\hfill$\Box$
\end{theorem}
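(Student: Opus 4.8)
The plan is to recognize the feedback iteration \eqref{eq:dynamic1}--\eqref{eq:dynamic2} as a fixed-point iteration $q(t+1) = T(q(t))$ for the self-map
\[
T(q) \;:=\; \bigl[\, f\bigl(Xq + \Delta\tilde{v}\bigr) \,\bigr]_{\Omega}, \qquad \Delta\tilde{v} := \tilde{v} - v^{\text{nom}},
\]
on the box $\Omega = \times_{i\in\hN}\Omega_i$, which is closed and convex, hence a complete metric space in the Euclidean norm. A point $(v^*,q^*)$ with $v^* = Xq^* + \tilde{v}$ is an equilibrium in the sense of the Definition precisely when $q^*$ is a fixed point of $T$, so it suffices to show that $T$ is a contraction, invoke the Banach (Contraction Mapping) Theorem, and note that the resulting unique fixed point coincides with the unique $q^*$ of Theorem~\ref{thm:eq}; the contraction modulus then supplies the exponential rate, for both $q(t)$ and $v(t)$.

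First I would establish that $f$ is globally Lipschitz component-wise with constants $\alpha_i$. By Assumption~\ref{A1} each $f_i$ is continuous and nonincreasing, identically $0$ on the deadband $[-\delta_i/2,\delta_i/2]$, and differentiable with $|f_i'|\le\alpha_i$ outside the deadband by Assumption~\ref{A2}; integrating $f_i'$ over the two outer pieces and matching the values continuously across the deadband endpoints gives $|f_i(a)-f_i(b)|\le\alpha_i|a-b|$ for all $a,b\in\mathbb{R}$. Hence, for any $u,u'\in\mathbb{R}^n$,
\[
\|f(u)-f(u')\|_2^2 \;=\; \sum_{i\in\hN}\bigl(f_i(u_i)-f_i(u_i')\bigr)^2 \;\le\; \sum_{i\in\hN}\alpha_i^2\,(u_i-u_i')^2 \;=\; \|\mathcal{A}(u-u')\|_2^2,
\]
where the final equality uses that $\mathcal{A}=\diag\{\alpha_i\}$ is diagonal (this diagonal structure is exactly what converts the component-wise bounds into a spectral-norm bound). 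Combining this with the nonexpansiveness of the Euclidean projection onto the convex set $\Omega$, and with $u-u' = X(q-q')$ when $u = Xq+\Delta\tilde{v}$, yields
\[
\|T(q)-T(q')\|_2 \;\le\; \bigl\|f(Xq+\Delta\tilde{v}) - f(Xq'+\Delta\tilde{v})\bigr\|_2 \;\le\; \|\mathcal{A}X(q-q')\|_2 \;\le\; \sigma_{\max}(\mathcal{A}X)\,\|q-q'\|_2 .
\]

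Under the hypothesis \eqref{eq:con}, $\rho := \sigma_{\max}(\mathcal{A}X) < 1$, so $T$ is a contraction on the complete space $\Omega$. The Banach fixed-point theorem then gives a unique fixed point $q^*\in\Omega$ and the estimate $\|q(t)-q^*\|_2 \le \rho^{\,t}\,\|q(0)-q^*\|_2$ for every initialization, i.e.\ geometric (exponential) convergence of $q(t)$. Since $v(t)=Xq(t)+\tilde{v}$ is an affine image of $q(t)$ and $v^* = Xq^* + \tilde{v}$, one also gets $\|v(t)-v^*\|_2 \le \sigma_{\max}(X)\,\rho^{\,t}\,\|q(0)-q^*\|_2$, and $(v^*,q^*)$ is the unique equilibrium identified in Theorem~\ref{thm:eq}. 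The only genuinely delicate step is the global Lipschitz claim for $f$ across the deadband, where each $f_i$ is merely continuous rather than differentiable; everything else (projection nonexpansiveness, the diagonal structure of $\mathcal{A}$, and the Banach theorem) is routine. A minor caveat worth checking is that the argument is unaffected when some $\alpha_i$ is attained only as a supremum of $|f_i'|$, since the integrated Lipschitz inequality still holds with that value.
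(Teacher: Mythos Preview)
Your proof is correct and follows the same contraction-mapping strategy the paper employs: the paper does not reprove this theorem (it is cited from \cite{zhou2018reverse}), but the proof it gives for the analogous signal-anticipating result (Theorem~\ref{anticipating_converge}) proceeds identically---establishing the coordinate-wise Lipschitz bound for the control function across the deadband by cases, invoking nonexpansiveness of the projection, and concluding via the Contraction Mapping Theorem. Your treatment of the deadband Lipschitz step is slightly more compressed than the paper's explicit case analysis but equally valid.
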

	
	Theorems \ref{thm:eq}-\ref{thm:con} imply that the dynamical system \eqref{eq:dynamic1}-\eqref{eq:dynamic2}  can be seen as a distributed algorithm for solving the optimization problem \eqref{eq:defminF}. 
	
	\section{The Signal-Anticipating Voltage Control}\label{sect:posa}
	
	As discussed in Section \ref{sect:model}, in the equivalent decision problem \eqref{eq:dynamic2b}, each node takes the feedback signal $v_i(t)$ as given, making the local Volt/Var control \eqref{eq:dynamic2} a {\em signal-taking} control. However, a node $i$ may be able to learn or infer the impact of its own decision $q_i$ on the feedback signal $v_i$, i.e., node $i$ knows $v_i$ as a function of $q_i$ (see equation \eqref{model_2}), and takes it into consideration when making the control decision on reactive power, which we call the {\em signal-anticipating} voltage control. With the signal-anticipating control, node $i\in\hN$ will decide its reactive power output according to the following optimization problem: 
	\begin{eqnarray}
\hspace{-2.5mm}q_i(t+1) 	\hspace{-2mm}&=& \hspace{-2mm}\arg\  \min_{q_i\in\Omega_i}  C_i(q_i)+q_i (X_{ii} q_i +\hspace{-2mm} \sum_{j\in\hN\backslash \{i\}} \hspace{-2mm}X_{ij} q_j(t) + \Delta \tilde{v}_i). 
	\label{eq:sa}
	\end{eqnarray}
	To see the difference from the signal-taking control, notice that \eqref{eq:dynamic2b} can be written as:
	\begin{eqnarray}
	q_i(t+1) \hspace{-2mm}&=& \hspace{-2mm}\arg\  \min_{q_i\in\Omega_i}  C_i(q_i)+q_i \Big(\sum_{j\in\hN} X_{ij} q_j(t) + \Delta \tilde{v}_i\Big). 
	\label{eq:st}
	\end{eqnarray}
	The signal-anticipating voltage control makes the interaction among the nodes  a game. 
	
	\begin{definition}\label{def:vcg}
		A {\em voltage control game} is defined as a triple $\mathcal{G}=\{ \hN, ( \Omega_i)_{i\in\hN},  (h_i)_{i\in\hN}\}$, with a set $\hN$ of players (nodes), node $i\in\hN$ strategy space $\Omega_i$, and cost function $h_i(q)= C_i(q_i)+q_i (\sum_{j\in\hN} X_{ij} q_j + \Delta \tilde{v}_i)$. 
	\end{definition}
	
	Let $q_{-i}=[q_1, \cdots, q_{i-1}, q_{i+1}, \cdots, q_N]^{\top}$ denote the reactive powers at all nodes other than $i$, and represent $q$ as $[q_i, q_{-i}^{\top}]^{\top}$. The signal-anticipating voltage control \eqref{eq:sa} can be written as 
	\begin{eqnarray}
	q_i(t+1) = \arg\  \min_{q_i\in\Omega_i} ~h_i(q_i, q_{-i}(t)), ~i\in\hN,
	\label{eq:br}
	\end{eqnarray}
	and is therefore the best response algorithm for the voltage control game $\mathcal{G}$ \cite{Fud91}.
	
	\begin{remark}
	The signal-anticipating control \eqref{eq:sa} can be rewritten as $$q_i(t+1) = \arg\  \min_{q_i\in\Omega_i} ~C_i(q_i)+q_i (X_{ii} q_i  - X_{ii} q_i (t) + v_i(t)  - {v}_i^{nom}),$$
	which can be computed using the {current} reactive power provisioning $q_i(t)$ and {the locally measured} voltage $v_i(t)$  at node $i$ {if $X_{ii}$ is known.} {Note that the voltage-to-power injection sensitivity factor $X_{ii}=\frac{\partial v_i}{\partial q_i}$, so it is reasonable to assume node $i$ can estimate it. Moreover, $X_{ij}$'s are physical parameters of the distribution network. So, it is also reasonable for the distribution system operator to estimate them and ``inform'' the nodes.} Therefore, the signal-anticipating control is a local control like the signal-taking control. 
		\end{remark}
	
	\subsection{Equilibrium}
	We now analyze the Nash equilibrium of the voltage control game. A vector $q^a$ of reactive powers is a Nash equilibrium if, for all nodes $i\in\hN$,  $h_i(q_i^a, q_{-i}^a)\leq h_i(q_i, q_{-i}^a)$ for all $q_i\in\Omega_i$.  We see that the Nash equilibrium is a set of reactive powers for which no node has the incentive to change unilaterally. 
	
	\begin{lemma}
		A Nash equilibrium $q^a$ of the voltage control game $\mathcal{G}$ is a fixed point (or equilibrium) of the signal-anticipating voltage control \eqref{eq:sa}, and vice versa.
	\end{lemma}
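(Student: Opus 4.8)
The plan is to unwind the two definitions and observe that they coincide once we check that each player's best response is a well-defined single point. First I would note that, for fixed $q_{-i}$, the cost $h_i(q_i,q_{-i}) = C_i(q_i) + X_{ii}q_i^2 + q_i\big(\sum_{j\in\hN\backslash\{i\}} X_{ij}q_j + \Delta\tilde{v}_i\big)$ is \emph{strictly} convex in $q_i$: $C_i$ is convex (as established below Assumption~\ref{A2}), and $X_{ii} > 0$ since $X$ is positive definite. Hence minimizing over the nonempty closed convex interval $\Omega_i$ yields a \emph{unique} minimizer, so the update \eqref{eq:sa} defines a single-valued best-response map $B_i(q_{-i}) := \arg\min_{q_i\in\Omega_i} h_i(q_i,q_{-i})$, and $q^a$ is a fixed point of \eqref{eq:sa} precisely when $q_i^a = B_i(q_{-i}^a)$ for every $i\in\hN$.

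For the direction ``fixed point $\Rightarrow$ Nash equilibrium'': suppose $q^a$ is a fixed point, so $q_i^a = B_i(q_{-i}^a)$ for all $i$. By the very definition of $B_i$ as a minimizer, $h_i(q_i^a,q_{-i}^a)\le h_i(q_i,q_{-i}^a)$ for all $q_i\in\Omega_i$ and all $i$, which is exactly the Nash equilibrium condition. For the converse ``Nash equilibrium $\Rightarrow$ fixed point'': if $q^a$ is a Nash equilibrium, then for each $i$ the value $q_i^a$ attains $\min_{q_i\in\Omega_i} h_i(q_i,q_{-i}^a)$; since this minimizer is unique by the strict convexity above, $q_i^a = B_i(q_{-i}^a)$, i.e., $q^a$ satisfies \eqref{eq:sa} and is a fixed point. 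Combining the two implications gives the equivalence.

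The argument is essentially immediate; the only step requiring care — which I would single out as the (minor) obstacle — is justifying that the $\arg\min$ in \eqref{eq:sa} is single-valued, so that ``fixed point of \eqref{eq:sa}'' and ``best response holds with equality'' literally coincide. Without strict convexity one would have to phrase the best response as a set-valued map and state the result as $q_i^a\in B_i(q_{-i}^a)$; it is the positive definiteness of $X$ (ensuring $X_{ii}>0$), together with convexity of $C_i$, that lets us avoid this and conclude cleanly.
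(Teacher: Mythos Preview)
Your proof is correct and follows essentially the same approach as the paper: both recognize that \eqref{eq:sa} is the best-response map of the game $\mathcal{G}$, so that fixed points and Nash equilibria coincide. The paper's proof is a single sentence invoking this identification; you have additionally spelled out the strict convexity argument (via $X_{ii}>0$) that makes the $\arg\min$ single-valued, which is a worthwhile clarification the paper leaves implicit.
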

	
	\begin{proof}
		The result follows from the fact that the signal-anticipating voltage control is the best response algorithm for the voltage control game $\mathcal{G}$; see equation \eqref{eq:br}. 
	\end{proof}
	
	Considering the function $W: \Omega \rightarrow \mathbb R$:
	\bqn
	W(q) =\sum_{i\in\hN}\left(C_i(q_i) + \frac{1}{2}X_{ii}q_i^2\right)+ \frac{1}{2}q^{\top} X q + q^{\top} \Delta \tilde{v}
	\eqn
	and the global optimization problem:
	\bq
	\min_{q\in \Omega} & & W(q). 
	\label{eq:defminW}
	\eq
	We have the following result { on the relation between the Nash equilibrium of voltage control game and the optimum of problem $\eqref{eq:defminW}$.}
	\begin{theorem}
		\label{thm:neq}
		Suppose Assumption \ref{A1} holds.  Then there exists a unique Nash equilibrium for the voltage control game $\mathcal{G}$.
		Moreover a point $q^a$ is a Nash equilibrium if and only if it is the unique
		optimum of \eqref{eq:defminW}.\hfill$\Box$
	\end{theorem}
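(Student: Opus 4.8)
The plan is to recognize $W$ as an \emph{exact potential function} for the game $\mathcal{G}$ and then exploit strict convexity, mirroring the argument behind Theorem~\ref{thm:eq}. First I would settle well-posedness. Since $X$ is positive definite, every diagonal entry satisfies $X_{ii}>0$; combined with convexity of $C_i$, the map $q_i\mapsto h_i(q_i,q_{-i})=C_i(q_i)+X_{ii}q_i^2+q_i\bigl(\sum_{j\neq i}X_{ij}q_j+\Delta\tilde{v}_i\bigr)$ is strictly convex, so the best response \eqref{eq:sa} is single-valued and the game is well defined. Likewise, $W$ is the sum of the convex $\sum_{i}C_i(q_i)$, the convex $\sum_i\frac12 X_{ii}q_i^2$, the strictly convex $\frac12 q^{\top}Xq$, and a linear term, hence strictly convex on $\Omega$; it is also coercive because the quadratic term dominates, so $W$ attains a unique minimum $q^a$ over the closed convex set $\Omega$.

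Next I would make the potential structure explicit. Fixing $i$ and freezing $q_{-i}$, the terms of $W(q)$ that involve $q_i$ are: $C_i(q_i)$; the term $\frac12 X_{ii}q_i^2$ coming from $\sum_j\frac12 X_{jj}q_j^2$; the terms $\frac12 X_{ii}q_i^2+\sum_{j\neq i}X_{ij}q_iq_j$ coming from $\frac12 q^{\top}Xq$ (using $X=X^{\top}$); and $q_i\Delta\tilde{v}_i$. Collecting these gives the identity $W(q_i,q_{-i})=h_i(q_i,q_{-i})+\phi_i(q_{-i})$, where $\phi_i$ does not depend on $q_i$; in words, changing one node's decision changes $W$ by exactly the change in that node's cost. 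In particular $\arg\min_{q_i\in\Omega_i}W(q_i,q_{-i}^a)=\arg\min_{q_i\in\Omega_i}h_i(q_i,q_{-i}^a)$ for every $i$.

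Finally I would invoke the block-coordinate characterization of minimizers of a convex function over a product set: because $\Omega=\prod_i\Omega_i$ and $W$ is convex, $q^a$ minimizes $W$ over $\Omega$ if and only if $q_i^a$ minimizes $W(\cdot,q_{-i}^a)$ over $\Omega_i$ for every $i$ — the forward direction is immediate, and the converse follows by adding up the per-block first-order (variational) inequalities. Combining with the potential identity, $q^a$ minimizes $W$ over $\Omega$ if and only if each $q_i^a$ is a best response to $q_{-i}^a$, i.e., if and only if $q^a$ is a Nash equilibrium of $\mathcal{G}$. Together with the uniqueness of the minimizer from Step~1, this yields existence and uniqueness of the Nash equilibrium and the claimed characterization, and by Lemma~1 it is also the unique fixed point of \eqref{eq:sa}.

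The one genuinely delicate point is the nonsmoothness of $C_i$ at the origin when the deadband $\delta_i>0$ (there $C_i'(0^+)-C_i'(0^-)=\delta_i$), which blocks a naive gradient argument in Step~3. I would phrase the per-block optimality conditions via subdifferentials rather than gradients: since the nonsmooth part $\sum_i C_i$ enters $W$ additively and separably, $\partial W(q)=\prod_i\partial C_i(q_i)+\nabla g(q)$ with $g$ the smooth quadratic part, so block-wise optimality $0\in\partial_{q_i}W(q_i^a,q_{-i}^a)+N_{\Omega_i}(q_i^a)$ aggregates cleanly into $0\in\partial W(q^a)+N_{\Omega}(q^a)$. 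Everything else is routine convex analysis.
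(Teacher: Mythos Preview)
Your proposal is correct and follows essentially the same route as the paper: the paper's proof simply notes that \eqref{eq:defminW} is strictly convex (hence has a unique optimum) and that its first-order optimality condition coincides with the fixed-point condition of the best-response map \eqref{eq:br}, which is exactly your potential-function identity $W(q_i,q_{-i})=h_i(q_i,q_{-i})+\phi_i(q_{-i})$ in compressed form. Your treatment is in fact more careful than the paper's---the explicit exact-potential framing, the block-coordinate characterization over the product set, and especially the subdifferential handling of the deadband kink are details the paper glosses over; the only superfluous step is the coercivity argument, since $\Omega$ is compact in the general setting of the theorem.
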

	
	\begin{proof}
		First, notice that the problem  \eqref{eq:defminW} is strictly convex. So, the first order optimality condition for  \eqref{eq:defminW} is both sufficient and necessary; and moreover,  \eqref{eq:defminW} has a unique optimum. Second, notice that the first order optimality condition is just the fixed point condition of the best response algorithm \eqref{eq:br}. The existence and uniqueness of the optimum of \eqref{eq:defminW} then implies that of the Nash equilibrium $q^a$. 
	\end{proof}
	
	\subsection{Dynamics}
	We now study the dynamic properties of the signal-anticipating voltage control \eqref{eq:sa}, i.e., the best response algorithm \eqref{eq:br}. 
			\begin{theorem} \label{anticipating_converge}
			Suppose Assumptions \ref{A1} and \ref{A2} hold. If 
			\begin{equation} \label{condtion1}
			\begin{aligned}
			\sigma_{\max}(\mathcal{B}\Bar{X}) < 1,
			\end{aligned}
			\end{equation}
			where $\mathcal{B} := \diag \{\beta_i\}_{i\in\hN}$ with $\beta_i = \frac{1}{1/\alpha_i + 2X_{ii}}$ and $\Bar{X}$ is the resulting matrix by setting diagonal elements of $X$ to 0,
			then the signal-anticipating voltage control \eqref{eq:sa} converges to the unique Nash equilibrium of the voltage control game $\mathcal{G}$. Moreover, it converges exponentially fast to the equilibrium.\hfill$\Box$
		\end{theorem}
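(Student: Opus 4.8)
The plan is to show that the best-response map $T:\Omega\to\Omega$ defined componentwise by
\[
T_i(q) = \arg\min_{q_i\in\Omega_i} C_i(q_i) + q_i\Big(X_{ii}q_i + \sum_{j\neq i} X_{ij}q_j + \Delta\tilde v_i\Big)
\]
is a contraction in a suitable norm under condition \eqref{condtion1}, and then invoke the Contraction Mapping Theorem to conclude existence/uniqueness of a fixed point (already known to coincide with the Nash equilibrium by Theorem~\ref{thm:neq}) and exponential convergence of the iteration $q(t+1)=T(q(t))$.

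First I would analyze the single-variable map $q_i \mapsto T_i(q)$ for fixed $q_{-i}$. The inner objective is $C_i(q_i) + X_{ii}q_i^2 + q_i\,s_i$ where $s_i := \sum_{j\neq i}X_{ij}q_j + \Delta\tilde v_i$ is the only quantity depending on $q_{-i}$. Since $C_i$ is convex with $C_i' = -f_i^{-1}$ (so $C_i'$ has ``slope'' at least $1/\alpha_i$ wherever $f_i$ is differentiable, by Assumption~\ref{A2} and the inverse-function relation $|(f_i^{-1})'|\ge 1/\alpha_i$), the map $q_i\mapsto C_i'(q_i) + 2X_{ii}q_i$ is monotone with a lower bound $1/\alpha_i + 2X_{ii}$ on its rate of increase. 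Hence the unconstrained minimizer, as a function of $s_i$, is Lipschitz with constant $\beta_i = 1/(1/\alpha_i + 2X_{ii})$, and projection onto the interval $\Omega_i$ is nonexpansive, so $|T_i(q) - T_i(q')| \le \beta_i\,|s_i - s_i'| \le \beta_i \sum_{j\neq i}|X_{ij}|\,|q_j - q_j'|$. The main care needed here is the non-smoothness/deadband in $f_i$: I would phrase the Lipschitz bound via monotonicity of subdifferentials rather than derivatives, using that $\partial C_i + 2X_{ii}\,\mathrm{Id}$ is strongly monotone with modulus $1/\beta_i$ so its inverse is $\beta_i$-Lipschitz — this is exactly the generalization to non-smooth $C_i$ the introduction advertises.

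Stacking these bounds gives $|T(q) - T(q')| \le \mathcal{B}\,\bar X\,|q - q'|$ componentwise (absolute values, $\bar X$ having entries $|X_{ij}|$ off-diagonal and $0$ on the diagonal; note $X\succ0$ and tree structure make the relevant entries well-signed, but taking absolute values is harmless). Therefore $\|T(q) - T(q')\|_2 \le \sigma_{\max}(\mathcal{B}\bar X)\,\|q - q'\|_2$, and \eqref{condtion1} makes $T$ a strict contraction on the closed set $\Omega$. Banach's fixed point theorem then yields a unique fixed point $q^a$ and geometric convergence $\|q(t) - q^a\|_2 \le \sigma_{\max}(\mathcal{B}\bar X)^t\,\|q(0) - q^a\|_2$; by the earlier lemma and Theorem~\ref{thm:neq} this $q^a$ is the unique Nash equilibrium, completing the proof.

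The step I expect to be the main obstacle is making the $\beta_i$-Lipschitz estimate for $T_i$ fully rigorous when $C_i$ is merely convex (with a deadband) rather than differentiable: one must argue via strong monotonicity of the subdifferential of $q_i\mapsto C_i(q_i) + X_{ii}q_i^2 + q_i s_i$ and the fact that the resolvent/minimizer of a $\mu$-strongly convex function is $1/\mu$-Lipschitz in the linear perturbation $s_i$, and then confirm that the interval projection does not spoil this. Everything after that — assembling the componentwise matrix inequality, passing to the spectral norm via $\sigma_{\max}$, and applying the Contraction Mapping Theorem — is routine.
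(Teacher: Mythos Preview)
Your proposal is correct and follows essentially the same contraction-mapping strategy as the paper: rewrite the best response so that node $i$'s update depends on $q_{-i}$ only through $s_i=\sum_{j\neq i}X_{ij}q_j+\Delta\tilde v_i$, show the coordinate map is $\beta_i$-Lipschitz in $s_i$, aggregate to obtain the Lipschitz constant $\sigma_{\max}(\mathcal{B}\bar X)$, and apply Banach's fixed point theorem.

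The only notable difference is how the $\beta_i$-Lipschitz bound is established. The paper constructs the explicit map $g_i = (f_i^{-1}-2X_{ii}\,\mathrm{Id})^{-1}$, bounds $|g_i'|\le\beta_i$ on the smooth pieces, and then handles the deadband by an elementary case analysis on the three intervals $(-\infty,-\delta_i/2]$, $[-\delta_i/2,\delta_i/2]$, $[\delta_i/2,\infty)$. Your route via strong monotonicity of $\partial C_i + 2X_{ii}\,\mathrm{Id}$ and the $1/\mu$-Lipschitz property of the minimizer of a $\mu$-strongly convex function under linear perturbation is cleaner and more general (it covers any convex $C_i$ whose subdifferential has slope at least $1/\alpha_i$, not just the specific deadband form), at the cost of invoking a slightly higher-level convex-analysis fact. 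Either way the projection onto $\Omega_i$ is handled by nonexpansiveness, and the passage from the componentwise bound to the $\ell_2$ contraction uses that $\bar X$ (and hence $\mathcal{B}\bar X$) has nonnegative entries, which holds here since $X_{ij}\ge 0$.
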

		\begin{proof}
			Notice that the solution to \eqref{eq:sa} for each $i \in \hN$ satisfies the following equations:
			\begin{equation*}
			\begin{aligned}					\tilde{q_{i}}(t+1) &= f_{i}\Big(2X_{ii}\tilde{q_{i}}(t+1) + \sum_{j \in \hN \backslash \{i\}}X_{ij}q_{j}(t) + \Delta \tilde{v}_i \Big),\\
			q_i(t+1) &= [\tilde{q_i}(t+1)]_{\Omega_i},
			\end{aligned}
			\end{equation*}
			where the first equation is equivalent  to 
			\begin{equation*}
			\begin{aligned}
			\underbrace{f_i^{-1}(\tilde{q}_i(t+1)) - 2X_{ii}\tilde{q}_i(t+1)}_{g_i^{-1}(\tilde{q}_i(t+1))} = \sum_{j \in \hN \backslash \{i\}}X_{ij}q_{j}(t) + \Delta \tilde{v}_i. 
			\end{aligned}
			\end{equation*}
			Here we define $g_i^{-1}(x) := f_i^{-1}(x) - 2X_{ii}x.$	
			Since $g_i^{-1}$ is strictly decreasing,  its inverse $g_i$ exists. 
			So, the signal-anticipating control \eqref{eq:sa} can be written as: 
			\begin{equation}\label{new_control}
			q_i(t+1) = \Bigg[g_i\Big(\sum_{j \in \hN \backslash \{i\}}X_{ij}q_{j}(t) + \Delta \tilde{v}_i\Big)\Bigg]_{\Omega_i}. 
			\end{equation}
			Based on the definition of $\Bar{X}$, we rewrite \eqref{new_control} in the vector form of 
			\begin{equation}
			\begin{aligned} \label{vector_new_control}
			q(t+1) = \left[g(\Bar{X}q(t) + \Delta \tilde{v})\right]_{\Omega},
			\end{aligned}
			\end{equation}
			where $g(x) := [g_1(x_1),\cdots,g_n(x_n)]^{\top }$.
			
			Under Assumption 2, $f_i^{'} \in [-\alpha_i, 0]$ in the appropriate domain. Thus, $(f_i^{-1})^{'} \leq -\frac{1}{\alpha_i}$ and $(g_i^{-1})^{'} \leq -\frac{1}{\alpha_i} - 2X_{ii}$.  Therefore,  $g_i$ has bounded gradient: $g_i^{'} \in [-\frac{1}{1/\alpha_i + 2X_{ii}},0] = [-\beta_i,0]$, by definition of $\beta_i$. 
			
			Although $g$ is non-smooth at some points, we can show that it is Lipschitz continuous, i.e., there exists a constant $L$ such that $\|g(x) - g(y)\|_2 \leq L\|x-y\|_2, \forall x,y$. To see this, 	without loss of generality, we assume $x_i \geq y_i$. If both $x_i$ and $y_i$ are in $(-\infty, -\frac{\delta_i}{2}]$ or in $[\frac{\delta_i}{2}, + \infty),$ by the mean value theorem we have $|g_i(x_i) - g_i(y_i)| \leq \beta_i|x_i-y_i|.$ If both are in $[-\frac{\delta_i}{2}, \frac{\delta_i}{2}]$, we have $0 = |g_i(x_i) - g_i(y_i)| \leq \beta_i |x_i-y_i|.$ If $x_i \in [\frac{\delta_i}{2}, + \infty)$ and $y_i \in [-\frac{\delta_i}{2}, \frac{\delta_i}{2}],$ $|g_i(x_i) - g_i(y_i)| = |g_i(x_i) - g_i(\frac{\delta_i}{2})| \leq \beta_i|x_i-\frac{\delta_i}{2}| \leq \beta_i |x_i - y_i|$, where the first inequality follows from the mean value theorem. Similarly, we can show that $|g_i(x_i) - g_i(y_i)| \leq \beta_i |x_i - y_i|$ holds under other situations too. Therefore,
			\begin{equation*}
			\begin{aligned}
			\|g(x)-g(y)\|_2 \leq \|\mathcal{B}(x-y)\|_2 \leq \sigma_{\max}(\mathcal{B})\|x-y\|_2.    
			\end{aligned}
			\end{equation*}
			
			Based on \eqref{vector_new_control} and the non-expansiveness property of projection operator, given any feasible $q,q'$, we can conclude
			\begin{equation*}
			\begin{aligned}
			&~~~\left\| \left[g(\Bar{X}q + \Delta \tilde{v})\right]_{\Omega} - \left[g(\Bar{X}q' + \Delta \tilde{v})\right]_{\Omega} \right\|_2\\
			& \leq \|g(\Bar{X}q + \Delta \tilde{v}) - g(\Bar{X}q' + \Delta \tilde{v})\|_2\\
			&\leq \|\mathcal{B}\Bar{X}(q-q')\|_2 \leq \sigma_{\max}(\mathcal{B}\Bar{X}) \| q- q'\|_2. 
			\end{aligned}
			\end{equation*}
			If condition \eqref{condtion1} is satisfied, the signal-anticipating control \eqref{vector_new_control} is a contraction mapping. This implies that $q(t)$ (and thus $v(t)$) converges exponentially fast to the unique Nash equilibrium of the voltage control game $\mathcal{G}$.
		\end{proof}
	
	{
	Recall that the signal-anticipating voltage control  \eqref{eq:sa} is the best response algorithm of the voltage control game. In general, the best response is not a converging strategy, but that is not case in our problem. On the other hand, the convergence condition \eqref{condtion1} is also expected:  if the control is cautious enough in adjusting reactive power (i.e., small enough $\alpha_i$), it will converge. It also shows that if $X_{ii}$ is large enough, the signal-anticipating control will converge too. This is consistent with our intuition that the signal-anticipating behavior will help with convergence, as each node adapts to the expected voltage rather than the current voltage. Indeed, as will be seen in the next subsection, the convergence condition \eqref{condtion1} is less restrictive than that for the signal-taking control. 
	}

	{Note that the convergence condition \eqref{condtion1} requires calculating the maximum singular value of a matrix, which may not be easy when the size of the network is large.}  
	We thus develop a sufficient condition for \eqref{condtion1}, which is easier to verify in practice. 
	\begin{corollary}
		Suppose Assumptions \ref{A1} and \ref{A2} hold. If
		\begin{equation}\label{sufficient}
			\max_{i \in \hN} \beta_i\cdot\max_{i\in \hN} \sum_{j \in \hN} \bar{X}_{ij}  < 1,
		\end{equation}
		then the signal-anticipating voltage control \eqref{eq:sa} converges exponentially fast to the unique Nash equilibrium of the voltage control game $\mathcal{G}$. 
	\end{corollary}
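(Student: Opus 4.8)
The plan is to show that the easily-checkable condition \eqref{sufficient} implies the spectral-norm condition \eqref{condtion1} of Theorem~\ref{anticipating_converge}, after which the conclusion (exponential convergence to the unique Nash equilibrium) follows immediately. So the whole corollary reduces to the norm inequality $\sigma_{\max}(\mathcal{B}\bar X) \le \max_{i\in\hN}\beta_i\cdot\max_{i\in\hN}\sum_{j\in\hN}\bar X_{ij}$.

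First I would peel off the diagonal factor. By submultiplicativity of the spectral norm, $\sigma_{\max}(\mathcal{B}\bar X)\le \sigma_{\max}(\mathcal{B})\,\sigma_{\max}(\bar X)$, and since $\mathcal{B}=\diag\{\beta_i\}$ with every $\beta_i=1/(1/\alpha_i+2X_{ii})>0$ (using $\alpha_i<\infty$ and $X_{ii}>0$), we have $\sigma_{\max}(\mathcal{B})=\max_{i\in\hN}\beta_i$. Next I would bound $\sigma_{\max}(\bar X)$: because $X$ is symmetric, $\bar X$ (its off-diagonal part) is symmetric, so $\sigma_{\max}(\bar X)=\rho(\bar X)$, the spectral radius. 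The spectral radius is dominated by any induced matrix norm; taking the $\infty$-norm (maximum absolute row sum) and noting that $\bar X_{ij}=X_{ij}\ge 0$ for $i\ne j$ — each $X_{ij}$ being a sum of nonnegative line reactances — while $\bar X_{ii}=0$, gives
\begin{equation*}
\sigma_{\max}(\bar X)=\rho(\bar X)\le \|\bar X\|_\infty=\max_{i\in\hN}\sum_{j\in\hN}|\bar X_{ij}|=\max_{i\in\hN}\sum_{j\in\hN}\bar X_{ij}.
\end{equation*}
(Equivalently, one can use $\|\bar X\|_2\le\sqrt{\|\bar X\|_1\|\bar X\|_\infty}$ together with $\|\bar X\|_1=\|\bar X\|_\infty$ by symmetry.) Multiplying the two bounds yields $\sigma_{\max}(\mathcal{B}\bar X)\le \max_{i}\beta_i\cdot\max_{i}\sum_{j}\bar X_{ij}<1$ under \eqref{sufficient}, so condition \eqref{condtion1} holds and Theorem~\ref{anticipating_converge} applies.

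The only step requiring any care is the bound on $\sigma_{\max}(\bar X)$: it uses two structural facts — symmetry of $\bar X$ (so singular values coincide with absolute eigenvalues) and entrywise nonnegativity (so the $\infty$-norm collapses to the plain row sum appearing in \eqref{sufficient}). Everything else is submultiplicativity of the spectral norm and the diagonal structure of $\mathcal{B}$, which are routine.
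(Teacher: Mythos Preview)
Your proof is correct and essentially the same as the paper's: both show that \eqref{sufficient} implies \eqref{condtion1} via a short matrix-norm estimate, after which Theorem~\ref{anticipating_converge} applies. The only cosmetic difference is that the paper applies the inequality $\|\mathcal{B}\bar X\|_2\le\sqrt{\|\mathcal{B}\bar X\|_1\|\mathcal{B}\bar X\|_\infty}$ directly to the product $\mathcal{B}\bar X$, whereas you first peel off $\mathcal{B}$ by submultiplicativity and then bound $\sigma_{\max}(\bar X)$ using its symmetry; both routes rely on the same structural facts (symmetry and nonnegativity of $\bar X$, diagonality of $\mathcal{B}$) and yield the identical bound.
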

	\begin{proof}
		Based on H\"{o}lder's inequality for matrix norms, we have 
		\begin{equation} \label{holder}
			\sigma_{\max}(\mathcal{B}\bar{X}) = \|\mathcal{B}\bar{X}\|_2 \leq \sqrt{\|\mathcal{B}\bar{X}\|_1\|\mathcal{B}\bar{X}\|_{\infty}} .
		\end{equation} 
		A sufficient condition for \eqref{condtion1} is given by 
			$\|\mathcal{B}\bar{X}\|_1 < 1$ and $\|\mathcal{B}\bar{X}\|_{\infty} < 1$, 
		which are satisfied if the condition \eqref{sufficient} holds.
		\end{proof}
	{
	
\subsection{Comparison of Convergence Conditions}\label{sect:con-comp}

By taking into account the impact of its own decision, the signal-anticipating voltage control adapts to the expected voltage rather than the current voltage (that results from the current reactive power provisioning), which expects to result in better convergence property. This is indeed the case as can be seen from the following result. 

		
		\begin{theorem} \label{theorem:5}
		The convergence condition \eqref{condtion1} for the signal-anticipating voltage control is less restrictive than the condition \eqref{eq:con} for the signal-taking voltage control, i.e.,
			\begin{equation}\label{equ:17}
				\sigma_{\max}(\mathcal{B}\Bar{X}) < \sigma_{\max}(\mathcal{A}X). 
			\end{equation}
		\end{theorem}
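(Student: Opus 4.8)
The plan is to reduce the spectral comparison to a coordinatewise inequality and then use the fact that the reactance matrix $X$ has nonnegative entries (indeed $X_{ij}=\sum_{(h,k)\in\hL_i\cap\hL_j}x_{hk}\ge0$), with positive diagonal since $X\succ0$. I would write $D:=\diag(X_{11},\dots,X_{nn})\succ 0$, so that $\bar X=X-D$ and $\mathcal{B}^{-1}=\mathcal{A}^{-1}+2D$, and set $s:=\sigma_{\max}(\mathcal{A}X)=\sigma_{\max}(X\mathcal{A})$; the latter gives $\|Xv\|\le s\,\|\mathcal{A}^{-1}v\|$ for every $v$ (put $v=\mathcal{A}u$ in $\|X\mathcal{A}u\|\le s\|u\|$). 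Because $\mathcal{B}\bar X$ and $\bar X\mathcal{B}$ are transposes of one another, $\sigma_{\max}(\mathcal{B}\bar X)=\sigma_{\max}(\bar X\mathcal{B})=\max_{u\ne0}\|\bar X\mathcal{B}u\|/\|u\|=\max_{v\ne0}\|\bar X v\|/\|\mathcal{B}^{-1}v\|$, so \eqref{equ:17} will follow once I show $\|\bar X v\|<s\,\|\mathcal{B}^{-1}v\|$ for every $v\ne0$ (the maximum being attained on the compact unit sphere).

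First I would prove this for $v\ge0$. For such $v$, nonnegativity of $X$ yields $[Xv]_i=X_{ii}v_i+\sum_{j\ne i}X_{ij}v_j\ge X_{ii}v_i=[Dv]_i\ge0$ for each $i$, hence $[\bar X v]_i^2=([Xv]_i-[Dv]_i)^2=[Xv]_i^2-2[Xv]_i[Dv]_i+[Dv]_i^2\le[Xv]_i^2-[Dv]_i^2$; summing over $i$ gives the key estimate $\|\bar X v\|^2\le\|Xv\|^2-\|Dv\|^2$. Since $\mathcal{B}^{-1}$ is diagonal with entries no smaller than those of $\mathcal{A}^{-1}$, one has $\|\mathcal{B}^{-1}v\|\ge\|\mathcal{A}^{-1}v\|$, and combining this with $\|Xv\|^2\le s^2\|\mathcal{A}^{-1}v\|^2$,
\[
s^2\|\mathcal{B}^{-1}v\|^2-\|\bar X v\|^2\ \ge\ \big(s^2\|\mathcal{A}^{-1}v\|^2-\|Xv\|^2\big)+\|Dv\|^2\ \ge\ \|Dv\|^2\ >\ 0
\]
for every nonzero $v\ge0$ (using $D\succ 0$). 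For an arbitrary $v\ne0$, nonnegativity of $\bar X$ gives $|[\bar X v]_i|\le\sum_j\bar X_{ij}|v_j|=[\bar X|v|]_i$, so $\|\bar X v\|\le\|\bar X|v|\|$, while $\|\mathcal{B}^{-1}v\|=\|\mathcal{B}^{-1}|v|\|$ since $\mathcal{B}^{-1}$ is diagonal; applying the nonnegative case to $|v|$ then gives $\|\bar X v\|\le\|\bar X|v|\|<s\,\|\mathcal{B}^{-1}|v|\|=s\,\|\mathcal{B}^{-1}v\|$, which finishes the argument and yields $\sigma_{\max}(\mathcal{B}\bar X)<s=\sigma_{\max}(\mathcal{A}X)$.

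The step I expect to be the main obstacle is the key estimate $\|\bar X v\|^2\le\|Xv\|^2-\|Dv\|^2$: the naive pointwise bound $\|\bar X v\|\le\|Xv\|$ is simply false, and an attempt to compare the two singular values by splitting $\mathcal{A}X=\mathcal{A}D+\mathcal{A}\bar X$ and bounding the pieces separately does not succeed either, since the diagonal contribution $\mathcal{A}D$ and the ``gain'' relating $\mathcal{A}^{-1}$ to $\mathcal{B}^{-1}=\mathcal{A}^{-1}+2D$ must be exploited jointly. What makes the estimate hold with a one-line coordinatewise computation is precisely the entrywise nonnegativity of the reactance matrix, which forces $X_{ii}$ to dominate inside each coordinate $[Xv]_i$ when $v\ge0$; the extra factor $2X_{ii}$ appearing in $\beta_i$ then only sharpens the bound. (A reformulation I would keep in mind as a sanity check is that $\|\bar X v\|<s\,\|\mathcal{B}^{-1}v\|$ for all $v\ne0$ is equivalent to $s^2\mathcal{B}^{-2}-\bar X^2\succ 0$, which expands as $(s^2\mathcal{A}^{-2}-X^2)+\big[XD+DX+(4s^2-1)D^2+4s^2\mathcal{A}^{-1}D\big]\succ 0$; the first term is PSD by the definition of $s$, but the bracket need not be, so the two contributions genuinely have to be combined — which is exactly what the nonnegativity-based computation does.)
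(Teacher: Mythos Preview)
Your proof is correct and takes a genuinely different route from the paper's. The paper proceeds in two steps: first it writes $\mathcal{B}=\mathcal{C}\mathcal{A}$ with $\mathcal{C}=\diag\{1/(1+2X_{ii}\alpha_i)\}$ and uses $\sigma_{\max}(\mathcal{C})<1$ together with submultiplicativity to get $\sigma_{\max}(\mathcal{B}X)<\sigma_{\max}(\mathcal{A}X)$; second, it observes the entrywise inequality $(\mathcal{B}\bar X)^{\top}(\mathcal{B}\bar X)\prec (\mathcal{B}X)^{\top}(\mathcal{B}X)$ between nonnegative matrices and invokes Gelfand's formula on Frobenius norms of powers to conclude $\sigma_{\max}(\mathcal{B}\bar X)<\sigma_{\max}(\mathcal{B}X)$. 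Your argument instead goes in one shot: the coordinatewise estimate $(a-b)^2\le a^2-b^2$ for $a\ge b\ge 0$ yields $\|\bar Xv\|^2\le\|Xv\|^2-\|Dv\|^2$ for $v\ge 0$, which combines with $\|Xv\|\le s\|\mathcal{A}^{-1}v\|$ and $\|\mathcal{B}^{-1}v\|\ge\|\mathcal{A}^{-1}v\|$ to give a strict pointwise bound, and the general case reduces to $|v|$ by nonnegativity of $\bar X$. The paper's decomposition is modular (each step is a standalone fact), but your approach is more elementary---no Gelfand formula---and it handles the \emph{strict} inequality more cleanly: in the paper's second step, passing the strict entrywise inequality through the limit $\lim_m\|\cdot^m\|_F^{1/m}$ only yields $\le$ without an additional Perron--Frobenius type argument, whereas your compactness-plus-pointwise-strictness step is airtight.
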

		
		\begin{proof}
			We prove Equation \eqref{equ:17} in two steps. We first show that $\sigma_{\max}(\mathcal{B}X) < \sigma_{\max}(\mathcal{A}X)$. Recall that $\mathcal{A} := \diag \{\alpha_i\}_{i\in\hN}$ and  $\mathcal{B} := \diag \{\beta_i\}_{i\in\hN}$ with $\beta_i = \frac{1}{1/\alpha_i + 2X_{ii}}$.  Define a  matrix $\mathcal{C} = \diag \{e_i\}_{i\in\hN}$ with $e_i = \frac{1}{1 + 2X_{ii}\alpha_{i}}$, we have $\mathcal{C}\mathcal{A} = \mathcal{B}$. Notice that $0< e_i <1$, so $\sigma_{\max}(\mathcal{C}) <1$. Therefore, 			
			\begin{equation*}
				\sigma_{\max}(\mathcal{B}X) = \sigma_{\max}(\mathcal{C}\mathcal{A}X) \leq \sigma_{\max}(\mathcal{C}) \sigma_{\max}(\mathcal{A}X) < \sigma_{\max}(\mathcal{A}X). 
			\end{equation*}
			
			We next show that $\sigma_{\max}(\mathcal{B\bar{X}}) < \sigma_{\max}(\mathcal{B}X).$
			Notice that $0 \prec (\mathcal{B\bar{X}})^{\top}\mathcal{B\bar{X}} \prec (\mathcal{B}X)^{\top}\mathcal{B}X$, where ``$\prec$'' denotes the element-wise inequality. Thus, Frobenius norms $\|((\mathcal{B\bar{X}})^{\top}\mathcal{B\bar{X}})^m\|_F < \|((\mathcal{B}X)^{\top}\mathcal{B}X)^m\|_F$ for $m>0$, from which we have $\|((\mathcal{B\bar{X}})^{\top}\mathcal{B\bar{X}})^m\|_F^{1/m} < \|((\mathcal{B}X)^{\top}\mathcal{B}X)^m\|_F^{1/m}.$ By Gelfand's formula \cite{reed2012methods}, 
\begin{eqnarray}
\nonumber \lambdaup_{\max}((\mathcal{B\bar{X}})^{\top}\mathcal{B\bar{X}}) &=&  \lim_{m \rightarrow \infty} \|((\mathcal{B\bar{X}})^{\top}\mathcal{B\bar{X}})^m\|_F^{1/m}  \\
\nonumber &<&  \lim_{m \rightarrow \infty} \|((\mathcal{B}X)^{\top}\mathcal{B}X)^m\|_F^{1/m}\\
\nonumber &=& \lambdaup_{\max}((\mathcal{B}X)^{\top}\mathcal{B}X), 
\end{eqnarray}
i.e., $\sigma_{\max}(\mathcal{B\bar{X}}) < \sigma_{\max}(\mathcal{B}X)$. 

Combining the two steps above, we obtain $\sigma_{\max}(\mathcal{B}\Bar{X}) < \sigma_{\max}(\mathcal{A}X)$. 
		\end{proof}
	
	By \eqref{equ:17}, if the condition \eqref{eq:con} holds, then the condition \eqref{condtion1} holds too. So, the signal-anticipating voltage control entails better convergence property. For example, given the same distribution network, it can converge under control functions with larger slopes than the signal-taking control.	
}

	\section{The Price of Signal-Anticipat{ion}}\label{pose}
		
The signal-taking and signal-anticipating behaviors in local voltage control are analogous to the price-taking and price-anticipating behaviors in economics. It is well-known that in a competitive market with price-taking customers the system achieves an efficient equilibrium and in an oligopolistic market with price-anticipating (or strategic) customers the system usually incurs efficiency loss; see, e.g., \cite{MWG,nisan2007algorithmic}. Similarly, the signal-taking behavior leads to an efficient equilibrium while the signal-anticipating behavior may result in efficiency loss, in term of the global cost function $F(q)$. In this section, we study such an efficiency loss. 
	
	\subsection{Price of Signal Anticipat{ion}}
	
	We define \textbf{price of signal anticipat{ion}} (PoSA) to characterize the impact of the signal anticipating behavior in local voltage control in particular and in distributed control in general.
	\begin{definition}
		The PoSA is defined as the gap in cost (or efficiency loss) between the network equilibrium $q^{*}$ of the dynamics \eqref{eq:dynamic2b} and the Nash equilibrium $q^{a}$ of the dynamics  \eqref{eq:sa}, i.e.,
		\begin{equation}
		\text{PoSA} = F(q^{a}) - F(q^{*}). \label{PoSA}
		\end{equation}
	\end{definition}

	We aim to investigate how PoSA scales with the size, topology, and heterogeneity of the distribution network. Such results will be insightful to {understanding the signal-anticipating behavior} and designing mechanisms to mitigate its impact if necessary.

	In this paper, we will focus on a special case where each node $i \in \mathcal{N}$ has a quadratic cost functions $C_{i}(q_{i}) = \frac{1}{2}y_{i}q_{i}^{2}$ with $y_{i} > 0.$ Quadratic cost functions are widely used in market models for the power system. We further assume for simplicity that there is no constraint in reactive power, i.e., $q_{i}^{\min} = - \infty, q_{i}^{\max} = \infty, i \in \mathcal{N}$. But the results are expected to extend to more general settings.
	
	Define the cost matrix $Y = \diag\{y_{i}\}_{i\in\hN}$ and denote by $D$ the diagonal part of $X$. The network equilibrium $q^{*}$ arising from the signal-taking behavior solves
	\begin{eqnarray}
	\min_{q} ~~~F(q) = \frac{1}{2}q^{\top}(X+Y)q + q^{\top}\Delta \tilde{v},
	\end{eqnarray} 
	i.e., $q^{*} = -(X+Y)^{-1}\Delta\tilde{v}$; whereas the Nash equilibrium $q^{a}$ arising from the signal-anticipating behavior solves
	\begin{eqnarray}
	\min_{q} ~~~W(q) = \frac{1}{2}q^{\top}(X+D+Y)q + q^{\top}\Delta \tilde{v}, 
	\end{eqnarray}
	i.e., $q^{a} = -(X+D+Y)^{-1}\Delta \tilde{v}$.
	 
	
	\begin{lemma}\label{lemma: posa}
	    	Given the reactance matrix $X$ and the cost matrix $Y$, the PoSA can be reformulated in a compact form as
	\begin{eqnarray}
	\text{PoSA} =F(q^{a}) - F(q^{*})= \frac{1}{2}\Delta \tilde{v}^{\top}\Pi\Delta \tilde{v}, \label{final_posa}
	\end{eqnarray}
	where $\Pi = (X+D+Y)^{-1}D(X+Y)^{-1}D(X+D+Y)^{-1}$.
	\end{lemma}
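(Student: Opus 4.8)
The plan is to exploit the fact that $F$ is a quadratic form whose unique minimizer is exactly $q^*$, so that the cost gap collapses to a quadratic in the displacement $q^a - q^*$, and then to evaluate that displacement via a resolvent identity.

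First I would record that $F(q) = \tfrac{1}{2}q^{\top}(X+Y)q + q^{\top}\Delta\tilde{v}$ has gradient $(X+Y)q + \Delta\tilde{v}$, which vanishes at $q^{*} = -(X+Y)^{-1}\Delta\tilde{v}$. Writing $q^{a} = q^{*} + \delta$ with $\delta := q^{a} - q^{*}$ and expanding $F$ about $q^{*}$, the first-order term $\delta^{\top}\big[(X+Y)q^{*} + \Delta\tilde{v}\big]$ vanishes, so the expansion is exact and
\[
\text{PoSA} = F(q^{a}) - F(q^{*}) = \tfrac{1}{2}\,\delta^{\top}(X+Y)\,\delta .
\]
(Equivalently, one could substitute $q^{*} = -(X+Y)^{-1}\Delta\tilde{v}$ and $q^{a} = -(X+D+Y)^{-1}\Delta\tilde{v}$ directly into $F$ and collect terms; using the minimizer just saves the algebra.)

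Next I would compute $\delta$. Since $\delta = \big[(X+Y)^{-1} - (X+D+Y)^{-1}\big]\Delta\tilde{v}$, I apply the identity $A^{-1} - B^{-1} = A^{-1}(B-A)B^{-1}$ with $A = X+Y$ and $B = X+D+Y$, for which $B - A = D$, obtaining $\delta = (X+Y)^{-1}D(X+D+Y)^{-1}\Delta\tilde{v}$. Both $X+Y$ and $X+D+Y$ are symmetric positive definite (recall $X \succ 0$, $Y \succ 0$, and $D \succ 0$), hence so are their inverses, and $D$ is symmetric, so $\delta^{\top} = \Delta\tilde{v}^{\top}(X+D+Y)^{-1}D(X+Y)^{-1}$. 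Substituting into $\text{PoSA} = \tfrac{1}{2}\delta^{\top}(X+Y)\delta$, the middle factor $(X+Y)^{-1}(X+Y)(X+Y)^{-1} = (X+Y)^{-1}$ collapses, leaving
\[
\text{PoSA} = \tfrac{1}{2}\,\Delta\tilde{v}^{\top}(X+D+Y)^{-1}D(X+Y)^{-1}D(X+D+Y)^{-1}\Delta\tilde{v} = \tfrac{1}{2}\,\Delta\tilde{v}^{\top}\Pi\,\Delta\tilde{v},
\]
which is the claimed identity. There is no genuine obstacle here; the only points requiring a little care are verifying that the first-order term in the expansion of $F$ around $q^{*}$ vanishes and bookkeeping the transposes, both immediate from the symmetry of $X$, $D$, and $Y$.
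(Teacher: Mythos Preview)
Your proof is correct and takes a genuinely different, cleaner route than the paper's. The paper substitutes $q^{*}$ and $q^{a}$ directly into $F$, splits the resulting expression into two pieces \textbf{(a)} and \textbf{(b)}, and then applies the Woodbury identity twice (once to simplify each piece, and again to simplify an intermediate term \textbf{(c)}) before the factors combine into $\Pi$. By contrast, you first use the optimality of $q^{*}$ to collapse the cost gap to the single quadratic $\tfrac{1}{2}\delta^{\top}(X+Y)\delta$, and then the elementary resolvent identity $A^{-1}-B^{-1}=A^{-1}(B-A)B^{-1}$ delivers $\delta$ in exactly the form needed for the middle $(X+Y)$ to cancel. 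Your argument avoids Woodbury entirely and makes the structure of $\Pi$ transparent; the paper's approach, while correct, obscures why the answer factors so neatly.
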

	{
	\begin{proof} 
	We have
		\begin{eqnarray} 
		&&\!\!\!\!\!\!F(q^{a}) -F(q^{*}) \nonumber\\
		&=& \!\!\!\!\!\!\frac{1}{2}\Delta \tilde{v}^{\top} \Big(\underbrace{(X+D+Y)^{-1}(X+Y)(X
			+D+Y)^{-1}\!\!\!-(X+D+Y)^{-1}}_{\textbf{(a)}} \nonumber\\
		&&~~~~~~\!\!\!+\underbrace{(X+Y)^{-1} - (X+D+Y)^{-1}}_{\textbf{(b)}}\Big)\Delta\tilde{v}. \label{equ:a+b}
		\end{eqnarray}
		By the Woodbury's formula \cite{golub2012matrix},
		\begin{equation*} \label{woodbury}
		(X+D+Y)^{-1} = (X+Y)^{-1} - (X+Y)^{-1}(D^{-1}+(X+Y)^{-1})^{-1}(X+Y)^{-1}.
		\end{equation*}
		Substituting the above into \textbf{(a)} and \textbf{(b)}, we obtain: 
		\begin{equation}
		\begin{aligned}
		\textbf{(a)}&=-(X+Y)^{-1}(D^{-1}+(X+D+Y)^{-1})^{-1}(X+D+Y)^{-1},\nonumber\\
		\textbf{(b)}&= (X+D+Y)^{-1}D(X+Y)^{-1}. \nonumber\\
		\end{aligned}
		\end{equation}
		Combining the above two equations, we have
		\begin{equation}
		\begin{aligned}
		\textbf{(a)}+\textbf{(b)}&=  (X+D+Y)^{-1}\underbrace{(D-(D^{-1}+(X+Y)^{-1})^{-1})}_{\textbf{(c)}}(X+Y)^{-1}.\nonumber
		\end{aligned}
		\end{equation}
			By the Woodbury's formula, the second term in \textbf{(c)}
		\begin{equation*}
		(D^{-1} + (X+Y)^{-1})^{-1} = D - D(X+D+Y)^{-1}D,
		\end{equation*}
		so we have $\textbf{(c)} = D(X+D+Y)^{-1}D$. Thus, we get
		\begin{equation}
		\begin{aligned}
		\textbf{(a)}+\textbf{(b)}&=(X+D+Y)^{-1}D(X+D+Y)^{-1}D(X+Y)^{-1}, \nonumber
		\end{aligned}
		\end{equation}
		
		i.e., $PoS\!A = \frac{1}{2}\Delta \tilde{v}^{\top}\Pi\Delta \tilde{v}$. 
	\end{proof}
}

	Notice that $\Pi \succ 0$, and thus PoSA is always positive for nonzero $\Delta \tilde{v}$ (which is actually the initial voltage deviation). Since PoSA is quadratic in $\Delta \tilde{v}$, without normalization PoSA can be arbitrarily large. We therefore investigate a normalized, worst-case PoSA with respect to the norm of $\Delta\tilde{v}$, defined as:
	\begin{eqnarray}
	\text{PoSA}_{\max}(X,Y) = \sup_{\Delta\tilde{v}} \frac{\text{PoSA}}{\Delta \tilde{v}^{\top}\Delta \tilde{v}} = \frac{1}{2}\sup_{\Delta\tilde{v}} \frac{\Delta \tilde{v}^{\top}\Pi\Delta \tilde{v}}{\Delta \tilde{v}^{\top}\Delta \tilde{v}}.
	\end{eqnarray}
	It follows that 
	\begin{eqnarray}
	\text{PoSA}_{\max}(X,Y)\! =\! \frac{1}{2}\lambdaup_{\max}(\Pi), \label{eq:nprosa}
	\end{eqnarray}
	where $\lambdaup_{\max}$ denotes the maximum eigenvalue, and can be achieved by the eigenvector of $\Pi$ corresponding to the maximum eigenvalue. 
	
	We will next characterize the $\text{PoSA}_{\max}$. 
	Before doing that, first notice that, if there are multiple subtrees at node $0$, the voltage controls at these subtrees are independent as node $0$ has a fixed voltage. Therefore, without loss of generality, for the rest of the paper we consider the network where node $0$ has only one direct child node.

{
\begin{remark}
We had originally considered $\frac{ F(q^a) - F(q^\star) } { F(q^\star) }$ or $\frac{ F(q^a) } { F(q^\star) }$ as the metric of efficiency loss, but the specific structure of the cost function in our problem does not allow a good analytical characterization of them. The metrics of the ratio type such as the price of anarchy was motivated by the competitive analysis of approximation or online algorithms where {\em competitive ratio} is used to characterize the performance of the algorithm. An analytical characterization of the competitive ratio is usually possible only if the problem has certain nice structure. Besides the competitive ratio, another typical metric used to characterize the performance of approximation or online algorithms is {\em regret} that is the difference in the achieved cost  of the algorithm and certain optimal cost. Our definition of the PoSA is a metric of the regret type.  
\end{remark}
}

	\subsection{Upper Bound on $\text{PoSA}_{\max}$}
	
	We now characterize the $\text{PoSA}_{\max}$ by deriving an upper bound for $\lambdaup_{\max}(\Pi)$. 
	Let $d = \max_{i\in \mathcal{N}} X_{ii} $, $y = \min_{i\in \mathcal{N}} y_{i}$, and $\lambdaup$, $\lambdaup_{\min}$ denote an arbitrary and the minimum eigenvalue respectively. 
	
	\begin{lemma} \label{lem:12}  An upper bound on eigenvalues of matrix $D(X+D+Y)^{-1}$ is given by: 
		\begin{equation}
		\begin{aligned}
		\lambdaup (D(X+D+Y)^{-1}) \leq \frac{d}{\lambdaup_{\min}(X)+d+y}.
		\end{aligned}
		\end{equation}
	\end{lemma}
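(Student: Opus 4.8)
The matrix $D(X+D+Y)^{-1}$ is not symmetric, but it is similar to the symmetric positive definite matrix $D^{1/2}(X+D+Y)^{-1}D^{1/2}$ via conjugation by $D^{1/2}$ (note $D\succ 0$ since $X\succ 0$ forces $X_{ii}>0$, and $X+D+Y\succ 0$). Hence its eigenvalues are real and positive, and it suffices to bound the largest eigenvalue of $D^{1/2}(X+D+Y)^{-1}D^{1/2}$. Writing $M:=X+D+Y$ and $c:=\dfrac{d}{\lambdaup_{\min}(X)+d+y}$, the bound $\lambdaup_{\max}\big(D^{1/2}M^{-1}D^{1/2}\big)\le c$ is equivalent to $D^{1/2}M^{-1}D^{1/2}\preceq cI$, which (conjugating by $D^{-1/2}$) is equivalent to $M^{-1}\preceq cD^{-1}$, and (inverting, which reverses the Loewner order on positive definite matrices) to $\tfrac{1}{c}D\preceq M$, i.e.\ to the single Loewner inequality
\begin{equation*}
M-\tfrac{1}{c}D \;=\; X+Y+\Big(1-\tfrac{1}{c}\Big)D \;\succeq\; 0 .
\end{equation*}

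The remaining step is to verify this inequality. Since $\tfrac{1}{c}=1+\tfrac{\lambdaup_{\min}(X)+y}{d}$, we have $1-\tfrac1c=-\tfrac{\lambdaup_{\min}(X)+y}{d}$, so $M-\tfrac1c D = X+Y-\tfrac{\lambdaup_{\min}(X)+y}{d}\,D$. Because $D=\diag\{X_{ii}\}$ and $d=\max_i X_{ii}$, we have $D\preceq dI$, hence $\tfrac{\lambdaup_{\min}(X)+y}{d}D\preceq(\lambdaup_{\min}(X)+y)I$. Combining with $X\succeq\lambdaup_{\min}(X)I$ and $Y\succeq yI$ (the latter since $y=\min_i y_i$) gives $M-\tfrac1c D\succeq X+Y-(\lambdaup_{\min}(X)+y)I\succeq 0$, which is exactly what we needed. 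This yields $\lambdaup\big(D(X+D+Y)^{-1}\big)\le c=\dfrac{d}{\lambdaup_{\min}(X)+d+y}$.

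I do not expect a genuine obstacle here: the only thing to be careful about is the chain of equivalences in the first paragraph, specifically that passing to inverses reverses the Loewner order, and that all the matrices involved ($D$, $Y$, $M$, and $X$) are positive definite so these manipulations are legitimate. Everything else reduces to the elementary bounds $D\preceq dI$, $X\succeq\lambdaup_{\min}(X)I$, $Y\succeq yI$. One could alternatively avoid the similarity transformation by working directly with the generalized eigenvalue problem $D w=\lambdaup M w$, but the symmetrization route seems cleanest.
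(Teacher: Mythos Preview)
Your proof is correct and follows essentially the same approach as the paper: both establish the Loewner inequality $X+D+Y\succeq \tfrac{1}{c}D$ from the elementary bounds $X+Y\succeq(\lambdaup_{\min}(X)+y)I$ and $D\preceq dI$, then invert. Your explicit symmetrization via $D^{1/2}(X+D+Y)^{-1}D^{1/2}$ is a welcome addition, since the paper writes the conclusion as $D(X+D+Y)^{-1}\preceq cI$ without commenting on the fact that the left-hand side is not symmetric.
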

	\begin{proof}
		By properties of matrix eigenvalue, we have 
		\begin{equation*}
		\begin{aligned}
		X + Y &\succeq \left(\lambdaup_{\min}(X)+y\right)I \succeq (\lambdaup_{\min}(X)+y)\frac{D}{d}, 
		\end{aligned}
		\end{equation*}
		where $I$ denotes the identity matrix. Equivalently,
		\begin{equation*}
		\begin{aligned}
		X+D+Y \succeq  \left(\frac{\lambdaup_{\min}(X)+y+d}{d}\right)D,
		\end{aligned}
		\end{equation*}
		which leads to
		\begin{equation*}
				D(X+D+Y)^{-1} \preceq \frac{d}{\lambdaup_{\min}(X)+d+y}I.
		\end{equation*} 
	\end{proof}

	By Lemma \ref{lem:12}, we derive an upper bound on $\lambdaup_{\max}(\Pi)$. 
	\begin{theorem} \label{theorem:1}
		The largest eigenvalue of $\Pi$ is upper bounded by:
		\begin{equation}\label{upper bound}
		\begin{aligned}
		\lambdaup_{\max}(\Pi)  \leq \lambdaup_{\max}((X+Y)^{-1}).
		\end{aligned}
		\end{equation} 
		That is, the $\text{PoSA}_{\max}$ is upper bounded by $\frac{1}{2}\lambdaup_{\max}((X+Y)^{-1})$.
	\end{theorem}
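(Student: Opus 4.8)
The plan is to upgrade the claimed scalar bound to a L\"owner-order inequality: I will show that, as symmetric matrices,
\[
\Pi \;\prec\; (X+Y)^{-1},
\]
which by the variational characterization of the top eigenvalue immediately gives $\lambdaup_{\max}(\Pi) < \lambdaup_{\max}((X+Y)^{-1})$, hence the theorem (indeed with strict inequality). So the real task is to rewrite $\Pi$ as $(X+Y)^{-1}$ minus a positive definite matrix.

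To do this I would simplify $\Pi = (X+D+Y)^{-1}D(X+Y)^{-1}D(X+D+Y)^{-1}$ by substituting $D = (X+D+Y)-(X+Y)$ and telescoping. Replacing the left-hand $D$ and cancelling an $(X+D+Y)^{-1}(X+D+Y)$ pair and an $(X+Y)(X+Y)^{-1}$ pair gives $\Pi = (X+Y)^{-1}D(X+D+Y)^{-1} - (X+D+Y)^{-1}D(X+D+Y)^{-1}$; doing the same substitution on the $D$ left in the first term collapses that term to $(X+Y)^{-1}-(X+D+Y)^{-1}$, so that
\[
\Pi = (X+Y)^{-1} - (X+D+Y)^{-1} - (X+D+Y)^{-1}D(X+D+Y)^{-1}.
\]
This is also visible directly in the proof of Lemma~\ref{lemma: posa}: the term $\textbf{(a)}$ there equals $(X+D+Y)^{-1}[(X+Y)-(X+D+Y)](X+D+Y)^{-1} = -(X+D+Y)^{-1}D(X+D+Y)^{-1}$, and $\textbf{(a)}+\textbf{(b)}$ is exactly the right-hand side above. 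Then I would simply note that both subtracted matrices are positive definite: $(X+D+Y)^{-1}\succ 0$ since $X+D+Y\succ 0$, and $(X+D+Y)^{-1}D(X+D+Y)^{-1}\succ 0$ since it is a congruence transform of $D$ by the invertible matrix $(X+D+Y)^{-1}$, and $D$ is positive definite because its diagonal entries $X_{ii}$ are strictly positive (being the diagonal of the positive definite matrix $X$). Hence $\Pi \prec (X+Y)^{-1}$ and the bound follows; in particular $\text{PoSA}_{\max} = \tfrac12\lambdaup_{\max}(\Pi) < \tfrac12\lambdaup_{\max}((X+Y)^{-1})$, slightly stronger than stated.

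The one thing to get right is recognizing this ``difference of positive definite matrices'' structure; everything else is a two-line check. A more brute-force attempt --- writing $\Pi = M^{\top}(X+Y)^{-1}M$ with $M = D(X+D+Y)^{-1}$ and trying $\lambdaup_{\max}(\Pi) \le \sigma_{\max}(M)^2\,\lambdaup_{\max}((X+Y)^{-1})$ --- would require $\sigma_{\max}(M)\le 1$, i.e.\ $(X+D+Y)^2 \succeq D^2$, which does not follow from $X+D+Y\succeq D$ alone since $S\mapsto S^2$ is not operator monotone; so although Lemma~\ref{lem:12} bounds the \emph{eigenvalues} of $M$ below one, that is not by itself enough along this route, and the algebraic identity is the clean way in. If a sharper estimate were wanted, one could further subtract $\lambdaup_{\min}(D)(X+D+Y)^{-2} = (\min_i X_{ii})(X+D+Y)^{-2}$ inside the bound, but that refinement is not needed here.
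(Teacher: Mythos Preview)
Your proof is correct and takes a genuinely different route from the paper. The paper argues via Lemma~\ref{lem:12}: it first rewrites $\lambdaup_{\max}(\Pi)$ using the similarity $AB\sim BA$, then applies a product-of-eigenvalues inequality to split off a factor $\lambdaup_{\max}\bigl(D^{2}(X+D+Y)^{-2}\bigr)$, bounds that factor by $d^{2}/(\lambdaup_{\min}(X)+d+y)^{2}\le 1$ via Lemma~\ref{lem:12}, and concludes. Your argument instead produces the L\"owner-order inequality $\Pi\prec (X+Y)^{-1}$ directly from the algebraic identity
\[
\Pi \;=\; (X+Y)^{-1} \;-\; (X+D+Y)^{-1} \;-\; (X+D+Y)^{-1}D(X+D+Y)^{-1},
\]
with both subtracted terms positive definite. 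This is shorter, avoids Lemma~\ref{lem:12} and any delicate eigenvalue-product step for non-symmetric factors, and yields strict inequality. As a bonus, regrouping the same identity as $\Pi=(X+Y)^{-1}-2(X+D+Y)^{-1}+(X+D+Y)^{-1}(X+Y)(X+D+Y)^{-1}$ immediately gives the lower bound of Theorem~\ref{lower} as well. What the paper's route buys, and yours does not, is the intermediate quantitative bound $\lambdaup_{\max}(\Pi)\le \frac{d^{2}}{(\lambdaup_{\min}(X)+d+y)^{2}}\,\lambdaup_{\max}((X+Y)^{-1})$, which is reused verbatim in the linear-network refinements (Theorem~\ref{thm:bfl} and the case $x_{ij}\in[a,b]$); if you want those corollaries you would need to recover that prefactor separately.
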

	\begin{proof} Notice that, given two matrices $A$ and $B$, $AB$ is similar to $BA$ and $\lambdaup(AB) \leq \lambdaup(A)\lambdaup(B)$~\cite{horn2012}. 
	We thus have 
		\begin{align}
		\lambdaup_{\max}(\Pi) &= \lambdaup_{\max}(D^{2}(X+D+Y)^{-2}(X+Y)^{-1}) \nonumber\\
		&\leq \lambdaup_{\max}(D^{2}(X+D+Y)^{-2})\lambdaup_{\max}((X+Y)^{-1}) \nonumber\\
		&\leq \frac{d^{2}}{(\lambdaup_{\min}(X)+d+y)^{2}}\lambdaup_{\max}((X+Y)^{-1}) \nonumber \\ 
		&\leq \lambdaup_{\max}((X+Y)^{-1}),
		\end{align}
		where the second inequality follows from Lemma \ref{lem:12}. 
	\end{proof}
	
	By \cite{horn1954}, $\lambdaup_{\min}(X) \leq\min_{i\in\hN} X_{ii}=X_{11}$. For a (large) tree network with large depth, $d=\max_{i\in\hN} X_{ii}$ scales linearly with the depth of the tree, and thus $d>> \lambdaup_{\min}(X) $ and $d>> y$. We see that $\frac{d^{2}}{(\lambdaup_{\min} (X) +d+y)^{2}} \rightarrow 1$ for large networks.

	\begin{remark}
	By Weyl's inequality \cite{weyl1912}, we have
	\begin{equation*}
	\lambdaup_{\max}((X+Y)^{-1}) = \frac{1}{\lambdaup_{\min}(X+Y)} 
	\leq \frac{1}{\lambdaup_{\min}(X) + y} 
	\leq \frac{1}{y}.
	\end{equation*}   
	We see that the $\text{PoSA}_{\max}$ is upper bounded by a constant independent of the size of the network, and the ``average'' $\text{PoSA}_{\max}$ per node goes to zero as the size of the network approaches to infinity. This is a desirable property as it states that the $\text{PoSA}_{\max}$ will not be arbitrarily large, no matter what the size of the network is. 
	
\end{remark}

	\subsection{Tightness of the Bound}
	We now investigate the tightness of the upper bound given by Theorem \ref{theorem:1}  under different conditions. To that end, we first give a lower bound on $\lambdaup_{\max}(\Pi)$, and then numerically investigate the tightness of the upper and lower bounds under different network settings. 
	\begin{theorem} \label{lower}
		The largest eigenvalue of $\Pi$ is lower bounded by:
		\begin{equation}
			\lambdaup_{\max}(\Pi) \geq \lambdaup_{\max}((X+Y)^{-1}-2(X+Y+D)^{-1}) . 
		\end{equation}
	\end{theorem}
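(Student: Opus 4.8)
The plan is to avoid working with the product form of $\Pi$ and instead reuse the algebra already carried out inside the proof of Lemma~\ref{lemma: posa}. Recall that there, \emph{before} invoking Woodbury's formula, we established the identity $\Pi = \textbf{(a)} + \textbf{(b)}$ (this is exactly \eqref{equ:a+b}), where $\textbf{(a)} = (X+D+Y)^{-1}(X+Y)(X+D+Y)^{-1} - (X+D+Y)^{-1}$ and $\textbf{(b)} = (X+Y)^{-1} - (X+D+Y)^{-1}$. First I would add these two expressions and regroup the standalone $(X+D+Y)^{-1}$ terms, yielding
\[ \Pi = \big[(X+Y)^{-1} - 2(X+D+Y)^{-1}\big] + (X+D+Y)^{-1}(X+Y)(X+D+Y)^{-1}. \]

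The key observation is then that the trailing term $(X+D+Y)^{-1}(X+Y)(X+D+Y)^{-1}$ is a congruence transform of $X+Y$; since $X+Y \succ 0$ (recall $X$ is positive definite by Section~\ref{sect:model} and $Y\succ 0$) and $X+D+Y$ is invertible, this term is positive semidefinite. Hence $\Pi \succeq (X+Y)^{-1} - 2(X+D+Y)^{-1}$ in the Loewner order. Finally I would invoke monotonicity of the largest eigenvalue under the Loewner order: if $A \succeq B$ then $\lambdaup_{\max}(A) = \max_{\|x\|_2 = 1} x^{\top} A x \geq \max_{\|x\|_2 = 1} x^{\top} B x = \lambdaup_{\max}(B)$. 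Applying this with $A = \Pi$ and $B = (X+Y)^{-1} - 2(X+D+Y)^{-1}$ gives precisely the claimed bound.

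There is essentially no hard step here; the argument is a one-line algebraic rearrangement followed by a positive-semidefiniteness observation and the Rayleigh-quotient comparison. The only thing requiring a bit of care is to start from the \emph{unsimplified} decomposition $\Pi = \textbf{(a)}+\textbf{(b)}$ rather than the compact product form $\Pi=(X+D+Y)^{-1}D(X+Y)^{-1}D(X+D+Y)^{-1}$: the product form does not expose the $(X+Y)^{-1}-2(X+D+Y)^{-1}$ piece, so starting there one would have to re-run the Woodbury manipulations in reverse. It may also be worth remarking, for completeness, that the bound is genuinely informative — the matrix $(X+Y)^{-1}-2(X+D+Y)^{-1}$ has a positive largest eigenvalue whenever $D$ is large relative to $X+Y$ along some direction, which is the regime of large/deep networks of interest — so that Theorem~\ref{lower} together with the upper bound of Theorem~\ref{theorem:1} brackets $\lambdaup_{\max}(\Pi)$ from both sides.
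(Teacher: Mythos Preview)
Your proposal is correct and follows essentially the same approach as the paper: both start from the decomposition $\Pi = \textbf{(a)}+\textbf{(b)}$ in \eqref{equ:a+b}, regroup to isolate $(X+Y)^{-1}-2(X+D+Y)^{-1}$ plus the positive-semidefinite term $(X+D+Y)^{-1}(X+Y)(X+D+Y)^{-1}$, and then drop the latter. The only cosmetic difference is that the paper phrases the last step via Weyl's inequality ($\lambdaup_{\max}(A+B)\geq \lambdaup_{\min}(A)+\lambdaup_{\max}(B)$ with $\lambdaup_{\min}(A)\geq 0$), whereas you use Loewner-order monotonicity of $\lambdaup_{\max}$ directly; these are equivalent here.
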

	\begin{proof}
		{Recall from equation \eqref{equ:a+b} that $\Pi$ is the sum of  two parts \textbf{(a)} and \textbf{(b)}. By re-arranging the terms in these two parts, 
		}we have 	
		\begin{equation*}
		\begin{aligned}
		\lambdaup_{\max}(\Pi) &= \lambdaup_{\max}(\underbrace{(X+D+Y)^{-1}(X+Y)(X+D+Y)^{-1}}_{{\textbf{(d)}}} \nonumber\\
		&~~~~~~~~+(X\!+\!Y)^{-1}\! -\! 2(X\!+\!D\!+\!Y)^{-1})\nonumber\\	
		&\geq \lambdaup_{\min}({\textbf{(d)}})+\lambdaup_{\max}((X\!+\!Y)^{-1}\! -\! 2(X\!+\!D\!+\!Y)^{-1}) \nonumber\\
		&\geq \lambdaup_{\max}((X\!+\!Y)^{-1}\! -\! 2(X\!+\!D\!+\!Y)^{-1}),
		\end{aligned}
		\end{equation*}
		where the first inequality is due to Weyl's inequality \cite{weyl1912} and the second inequality results from the fact that ${\textbf{(d)}} \succeq 0$.
	\end{proof}

    By Theorems  \ref{theorem:1} and \ref{lower}, we have 
    \begin{equation*}
	    	\lambdaup_{\max}((X+Y)^{-1}\!\!-2(X+Y+D)^{-1}) \leq \lambdaup_{\max}(\Pi) \leq \lambdaup_{\max}((X+Y)^{-1}).
    \end{equation*}
    By Weyl's inequality, we have 
	    $\lambdaup_{\max}(X+Y)^{-1} - \lambdaup_{\max}((X+Y)^{-1}-2(X+Y+D)^{-1}) \leq 2\lambdaup_{\max}((X+D+Y)^{-1})$. So the gap between the lower and upper bounds is bounded by $2\lambdaup_{\max}((X+D+Y)^{-1})$. Although this bound on the gap can be large for general matrix, due to the special structure of the reactance matrix $X$ 
	    our numerical experiments show that it is very small under most of the  settings (see below). This means that the upper and lower bounds on the $\text{PoSA}_{\max}$ are tight.

	\begin{figure}[ht!]
	\center{
		\includegraphics[scale = 0.7]{./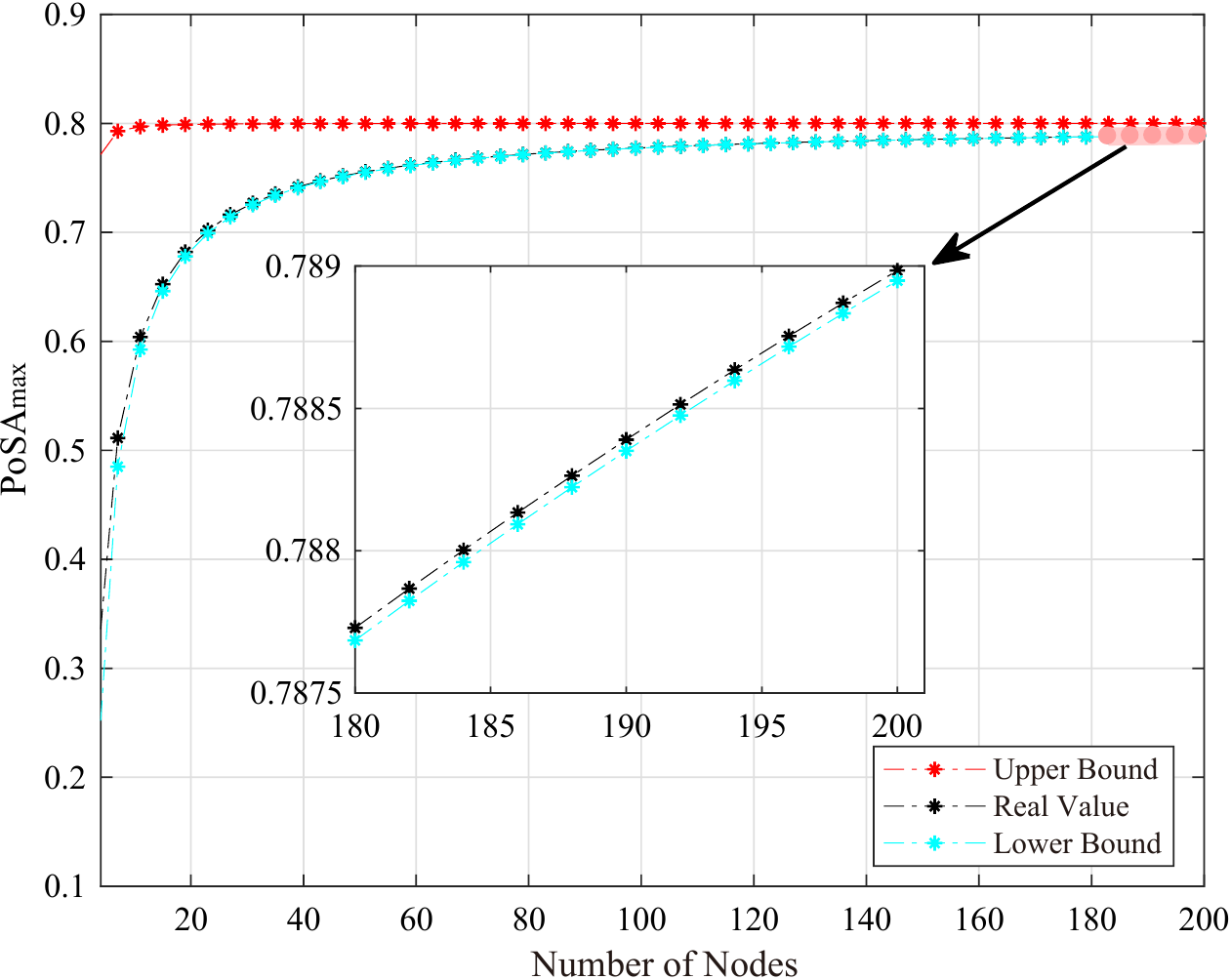}
		\caption{Bounds on $\text{PoSA}_{\max}$ for linear network with $x_{ij} = 1,~Y = I$.}
		\label{fig:fig3}}
	\end{figure}
	
	\begin{figure}[ht!]
	\center{
		\includegraphics[scale = 0.7]{./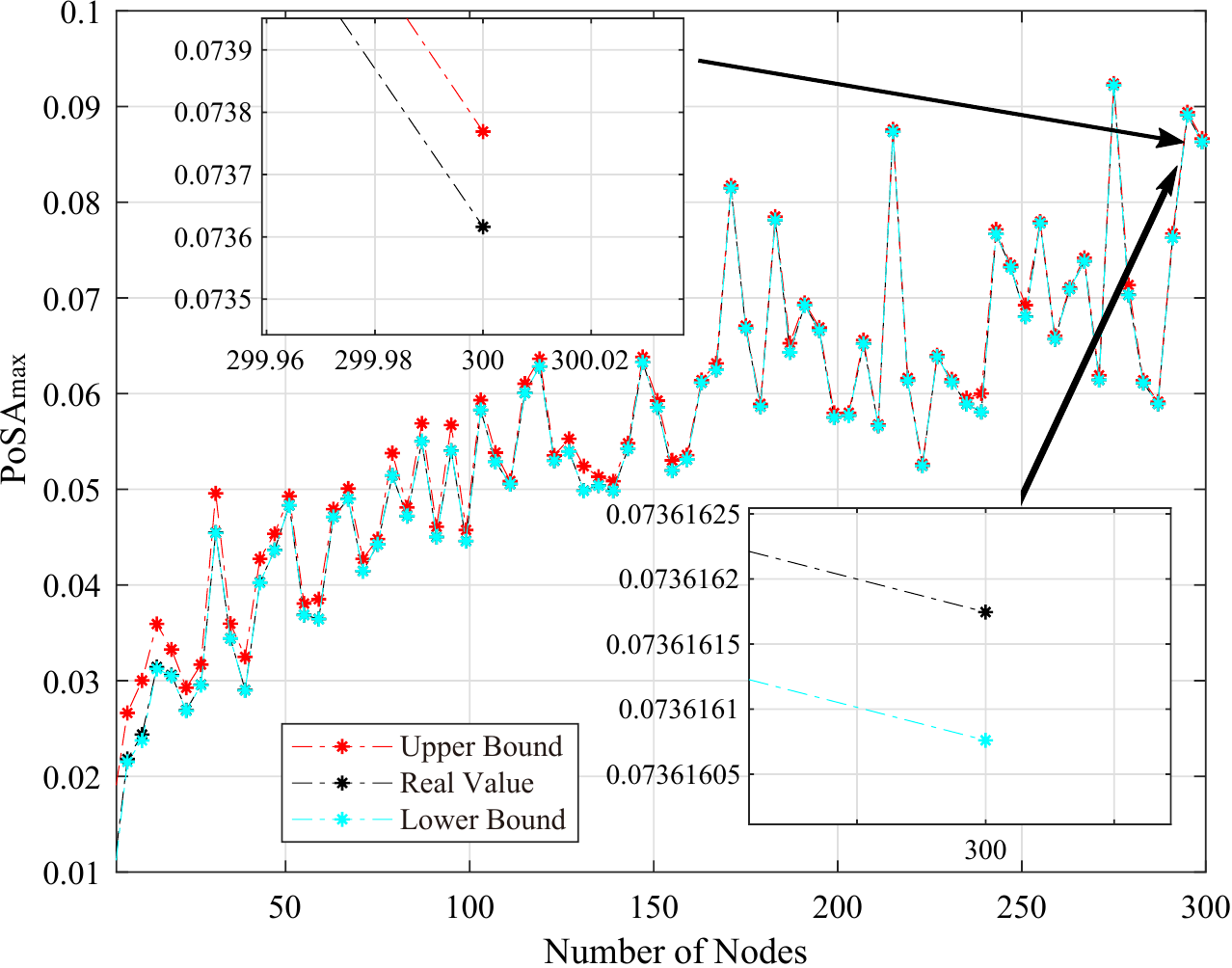}
		\caption{Bounds on $\text{PoSA}_{\max}$ for linear network with randomly chosen $x_{ij} \in (0,200],~y_i \in (0,100].$ Each sample is repeated 10 times.}
		\label{fig:fig4}}
	\end{figure}

Fig.~\ref{fig:fig3} shows the comparison between actual values and lower/upper bounds of the $\text{PoSA}_{\max}$ for a homogeneous linear network with different sizes. 
We see that the gap between the lower and upper bounds approaches to 0 as the size of the network increases. 
Fig. \ref{fig:fig4} shows the comparison for a heterogeneous linear network with randomly chosen parameters. Again, we see the bounds on the $\text{PoSA}_{\max}$ are tight when the size of the network is large. 


	\begin{figure}[ht!]
	\center{
	\includegraphics[scale = 0.7]{./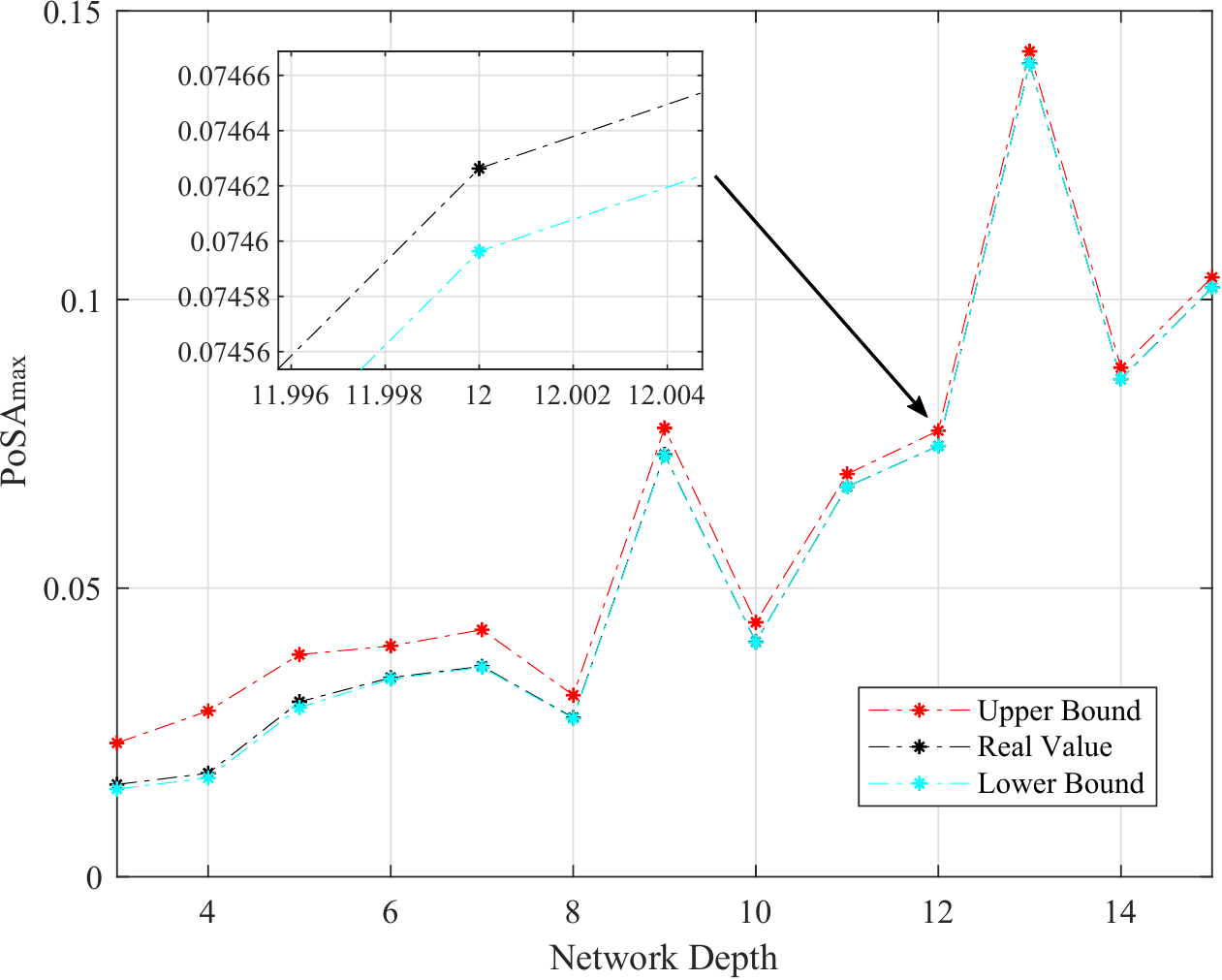}
	\caption{Bounds on $\text{PoSA}_{\max}$ for random tree with degree distribution $\mathbb{P} = \{0.5,0.5\}$ and randomly chosen $x_{ij} \in (0,200],~y_i \in (0,100]$. Each sample is repeated 10 times.}
	\label{fig:fig5}}
    \end{figure}
	
	\begin{figure}[ht!]
	\center{
		\includegraphics[scale = 0.7]{./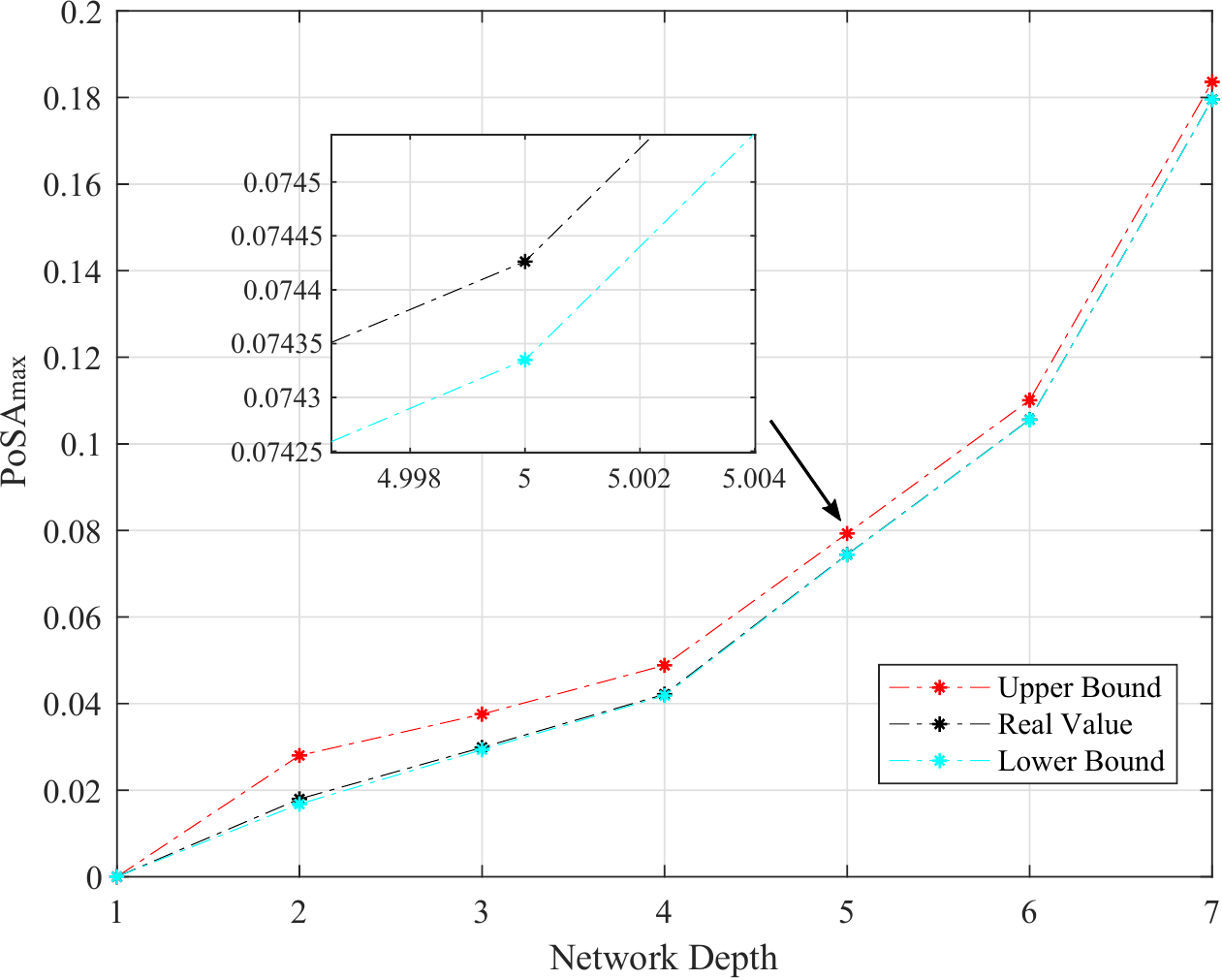}
		\caption{Bounds in $\text{PoSA}_{\max}$ for ternary tree with degree distribution $\mathbb{P} = \{0.3,0.3,0.4\}$ and randomly chosen $x_{ij} \in (0,200],~y_i \in (0,100].$ Each sample is repeated 10 times.}
		\label{fig:fig6}}
	\end{figure}

	We next evaluate the tightness of the bounds on random trees. The random trees are generated according to the degree distribution $\mathbb{P}$ in the number of children. For example, $\mathbb{P} = \{0.5,0.5\}$ means a node will have 0.5 probability of having one child and 0.5 probability of having two children. Moreover, the reactance of the power line and the cost coefficient of the node  are randomly chosen. Fig.~\ref{fig:fig5} shows the bounds for a random binary tree generated by distribution $\mathbb{P} = \{0.5,0.5\}$ and of up to the depth of 15 (totaling about 1400 nodes). Fig.~\ref{fig:fig6} shows the bounds for a ternary tree  generated by distribution $\mathbb{P} = \{0.3,0.3,0.4\}$. We see that the lower and upper bounds on the $\text{PoSA}_{\max}$ are tight when the size of network is large, and give a good approximation to the actual  $\text{PoSA}_{\max}$.

	Notice that all our numerical experiments show that the lower bound on the $\text{PoSA}_{\max}$ is tighter than the upper bound. Nonetheless, we choose the upper bound instead of the lower bound for analysis of $\text{PoSA}_{\max}$ for two reasons. First, we consider the worst-case $\text{PoSA}_{\max}$, so the upper bound is more appropriate. Second, even though we do not have a closed form for either bound, the upper bound is simpler to compute. 	
	Moreover, as will be seen in the next subsection,  we can derive a closed form for the upper bound for linear networks. 
	
	
	\subsection{Case Study: Linear Network}
	
	In this subsection, we investigate the $\text{PoSA}_{\max}$ for the linear network as shown in Fig.~\ref{fig: linear}. 
	
	\begin{figure}[ht]
		\centering
		\includegraphics[width = 7cm,height= 0.65cm]{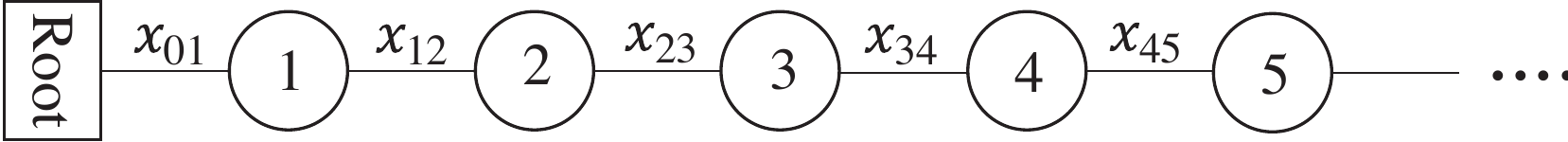}
		\caption{Linear Network}
		\label{fig: linear}
		\vspace{0mm}
	\end{figure} 
	
	For a linear network, the reactance matrix $X$ is given by
	\begin{equation*}
	\begin{aligned}
	X = \begin{bmatrix}
	x_{0,1}&\cdots&\cdots&x_{0,1}\\
	\vdots&\sum_{i=0}^{2}x_{i,i+1}&\cdots&\sum_{i=0}^{2}x_{i,i+1}\\
	\vdots&\vdots&\ddots&\vdots\\
	x_{0,1}&\sum_{i=0}^{2}x_{i,i+1}&\cdots&\sum_{i=0}^{n-1}x_{i,i+1}
	\end{bmatrix}, 
	\end{aligned}
	\end{equation*}
	where for clarity we write $x_{ij}$ as $x_{i,j}$. 
	By Lemma \ref{Laplacian} in the Appendix, $X^{-1}$ is a symmetric tridiagonal matrix: 
	\begin{equation*}\label{inverse}
	\begin{aligned}
	X^{-1} = 
	\begin{bmatrix} 
	\frac{x_{0,1}+x_{1,2}}{x_{0,1}x_{1,2}}&-\frac{1}{x_{1,2}}&0&\cdots&\cdots\\
	-\frac{1}{x_{1,2}}&\frac{x_{1,2}+x_{2,3}}{x_{1,2}x_{2,3}}&-\frac{1}{x_{2,3}}&0&\cdots\\
	0&-\frac{1}{x_{2,3}}&\frac{x_{2,3}+x_{3,4}}{x_{2,3}x_{3,4}}&-\frac{1}{x_{3,4}}&0\\
	\vdots&0&\ddots&\ddots&\ddots\\
	\vdots&\vdots&0&-\frac{1}{x_{n-1,n}}&\frac{1}{x_{n-1,n}} 
	\end{bmatrix}.
	\end{aligned}
	\end{equation*}
	We can have an analytical form for the eigenvalues of $X$ when all the $x_{i,i+1}$ take the same value.
	\begin{lemma} \label{special case}
		If all the line reactances $x_{i,i+1}$ take the same value, denoted by $a$, the $k$-th largest eigenvalue $\lambdaup_{k}$ of matrix $X^{-1}$ is given by
		\begin{equation}\label{eigens}
		\begin{aligned}
		\lambdaup_{k} = \frac{2}{a} + \frac{2}{a}\cos\frac{2k\pi}{2n+1}, k = 1,2,\cdots,n.
		\end{aligned}
		\end{equation}
	\end{lemma}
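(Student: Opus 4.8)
The plan is to work directly with the explicit tridiagonal form of $X^{-1}$ displayed just before the lemma. When every line reactance equals $a$, that matrix becomes $\frac{1}{a}$ times the $n\times n$ matrix
\begin{equation*}
T = \begin{bmatrix}
2 & -1 & & & \\
-1 & 2 & -1 & & \\
 & \ddots & \ddots & \ddots & \\
 & & -1 & 2 & -1 \\
 & & & -1 & 1
\end{bmatrix},
\end{equation*}
that is, the standard second-difference matrix except that the last diagonal entry is $1$ instead of $2$. So the whole problem reduces to diagonalizing $T$, and the eigenvalues of $X^{-1}$ are $\lambda_k = \mu_k/a$ where $\mu_k$ are the eigenvalues of $T$. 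I would therefore state up front that $X^{-1} = \frac{1}{a}T$ and spend the rest of the proof on $T$.

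To find the spectrum of $T$, I would use the classical ansatz for tridiagonal Toeplitz-type eigenvectors. If $\mu = 2 - 2\cos\theta$ for some $\theta$, then any sequence of the form $u_j = \sin(j\theta + \phi)$ satisfies the bulk recurrence $-u_{j-1} + 2u_j - u_{j+1} = \mu u_j$ for the interior rows $j = 2,\dots,n-1$ automatically. The two boundary rows impose two conditions that pin down $\phi$ and quantize $\theta$. The first row, $2u_1 - u_2 = \mu u_1$, is equivalent to the ``phantom'' condition $u_0 = 0$, which forces $\phi = 0$, so $u_j = \sin(j\theta)$. The last row, $-u_{n-1} + u_n = \mu u_n = (2-2\cos\theta)u_n$, rearranges to $u_{n-1} = (1 - 2\cos\theta)u_n$, i.e. the phantom condition $u_{n+1} = -u_n$ (using $u_{n+1} = 2\cos\theta\, u_n - u_{n-1}$). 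Writing this out with $u_j = \sin(j\theta)$ gives $\sin((n+1)\theta) = -\sin(n\theta)$, equivalently $\sin((n+1)\theta) + \sin(n\theta) = 0$, i.e. $2\sin\!\big(\tfrac{(2n+1)\theta}{2}\big)\cos\!\big(\tfrac{\theta}{2}\big) = 0$. Discarding $\cos(\theta/2)=0$ (which gives the zero vector), we get $(2n+1)\theta/2 = k\pi$, i.e. $\theta_k = \frac{2k\pi}{2n+1}$ for $k = 1,\dots,n$ (these give $n$ distinct values of $\cos\theta_k$, hence $n$ distinct eigenvalues, which must be all of them).

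Then $\mu_k = 2 - 2\cos\theta_k = 2 - 2\cos\frac{2k\pi}{2n+1}$, so $\lambda_k = \frac{\mu_k}{a} = \frac{2}{a} - \frac{2}{a}\cos\frac{2k\pi}{2n+1}$. To match the statement's form $\lambda_k = \frac{2}{a} + \frac{2}{a}\cos\frac{2k\pi}{2n+1}$, I would reindex: replacing $k$ by $n+1-k$ sends $\frac{2k\pi}{2n+1}$ to $\pi - \frac{(2k-1)\pi}{2n+1}$... more cleanly, note $-\cos\frac{2k\pi}{2n+1} = \cos\!\big(\pi - \frac{2k\pi}{2n+1}\big) = \cos\frac{(2n+1-2k)\pi}{2n+1}$, and as $k$ ranges over $1,\dots,n$ the index $2n+1-2k$ ranges over the odd numbers $2n-1, 2n-3,\dots,1$; one checks this is the same multiset of cosine values as $\cos\frac{2\ell\pi}{2n+1}$ for $\ell=1,\dots,n$ — indeed $\cos\frac{2\ell\pi}{2n+1} = \cos\frac{(2n+1)\cdot 2 - 2\ell\cdot 2 ... }{}$; the slick way is that $\{\cos\frac{2\pi m}{2n+1} : m=1,\dots,2n\}$ is symmetric under $m \mapsto 2n+1-m$, and $\{2,4,\dots,2n\}$ and $\{1,3,\dots,2n-1\}$ are swapped by that involution, so they give the same set of cosines. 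Hence $\{-\cos\theta_k\}_{k=1}^n = \{\cos\frac{2k\pi}{2n+1}\}_{k=1}^n$ and the two forms of the eigenvalue list agree as sets (the indexing $k$ being just a label). I would write this reindexing remark briefly rather than belabor it.

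The main obstacle is purely bookkeeping: getting the last (modified) boundary row translated correctly into a clean phantom-node condition, and then matching the resulting quantization $\theta_k = \frac{2k\pi}{2n+1}$ to the exact form $\frac{2}{a} + \frac{2}{a}\cos\frac{2k\pi}{2n+1}$ claimed in the lemma via the cosine-symmetry reindexing. There is no analytic difficulty — the interior recurrence is handled for free by the sinusoidal ansatz, and distinctness of the $n$ eigenvalues (hence completeness) follows because $\theta \mapsto \cos\theta$ is injective on $(0,\pi)$ and the $\theta_k$ lie in that interval with $2k\pi/(2n+1) < \pi$. One should double-check the edge case of which boundary contributes $u_0 = 0$ versus $u_{n+1} = -u_n$, since the irregular entry is at the bottom-right corner (the leaf node $n$), matching the $\frac{1}{x_{n-1,n}}$ rather than $\frac{x_{n-1,n}+x_{n,n+1}}{\cdots}$ entry in the displayed $X^{-1}$.
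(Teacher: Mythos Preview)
Your approach is sound and more self-contained than the paper's, which simply writes down the explicit tridiagonal $X^{-1}$ and cites Theorem~1 of Yueh (2005) for its spectrum. Deriving the eigenvalues directly via the sinusoidal ansatz is a perfectly good substitute for that citation.

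However, there is a genuine sign slip at the last boundary row, and it propagates. From $-u_{n-1}+u_n=(2-2\cos\theta)u_n$ you get $u_{n-1}=(2\cos\theta-1)u_n$, not $(1-2\cos\theta)u_n$. Equivalently, comparing the last row to the bulk recurrence $-u_{n-1}+2u_n-u_{n+1}=\mu u_n$ yields the phantom condition $u_{n+1}=u_n$, not $u_{n+1}=-u_n$. With $u_j=\sin(j\theta)$ this gives $\sin((n{+}1)\theta)-\sin(n\theta)=2\cos\!\big(\tfrac{(2n+1)\theta}{2}\big)\sin\!\big(\tfrac{\theta}{2}\big)=0$, hence $\theta_k=\frac{(2k-1)\pi}{2n+1}$ and $\mu_k=2-2\cos\frac{(2k-1)\pi}{2n+1}$. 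The substitution $\ell=n{+}1{-}k$ then gives $\mu_\ell=2+2\cos\frac{2\ell\pi}{2n+1}$ directly, matching the lemma. (A quick check at $n=2$: $T=\begin{bmatrix}2&-1\\-1&1\end{bmatrix}$ has eigenvalues $\tfrac{3\pm\sqrt5}{2}\approx 2.618,\,0.382$, which equal $2+2\cos\frac{2\pi}{5},\,2+2\cos\frac{4\pi}{5}$ but \emph{not} $2-2\cos\frac{2\pi}{5},\,2-2\cos\frac{4\pi}{5}$.)

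Your reindexing paragraph is therefore patching the wrong intermediate result, and the ``slick'' symmetry argument there is itself off by a factor of two in the angle: you need equality of $\{\cos\frac{m\pi}{2n+1}:m\ \text{odd}\}$ and $\{\cos\frac{m\pi}{2n+1}:m\ \text{even}\}$, but the involution $m\mapsto 2n{+}1{-}m$ sends $\cos\frac{m\pi}{2n+1}$ to $-\cos\frac{m\pi}{2n+1}$, so those two sets are negatives of each other, not equal. Fix the boundary sign and the rest falls out cleanly without any delicate reindexing.
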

	\begin{proof}
		We have 
		\begin{equation*}
		\begin{aligned}
		X^{-1} = \begin{bmatrix}
		\frac{2}{a}&-\frac{1}{a}&0&\cdots&\cdots\\
		-\frac{1}{a}&\frac{2}{a}&-\frac{1}{a}&0&\cdots\\
		0&-\frac{1}{a}&\frac{2}{a}&-\frac{1}{a}&0\\
		\vdots&0&\ddots&\ddots&-\frac{1}{a}\\
		\vdots&\vdots&0&-\frac{1}{a}&\frac{1}{a}
		\end{bmatrix}.
		\end{aligned}
		\end{equation*}
		By Theorem 1 of \cite{Yueh2005}, $X^{-1}$ has eigenvalues 
		\begin{eqnarray}
		\lambdaup_{k} = \frac{2}{a} + \frac{2}{a}\cos\frac{2k\pi}{2n+1}, k = 1,2,\cdots,n. \nonumber
		\end{eqnarray}	
	\end{proof}
	
	The following result is immediate. 
	\begin{theorem}\label{thm:bfl}
		For a linear network of size $n$ with all the line reactances equal to $a$, an upper bound of  $\text{PoSA}_{\max}$ is given by
	\begin{equation*} \label{linear_bound}
	\begin{aligned}
	\text{PoSA}_{\max}(X,Y)	&\leq
	\frac{(an)^{2}}{\left(\frac{a}{2+2\cos\frac{2\pi}{2n+1}}+an+y\right)^{2}\left(y+\frac{a}{2+2\cos\frac{2\pi}{2n+1}}\right)}.	
	\end{aligned}
	\end{equation*} 
	\end{theorem}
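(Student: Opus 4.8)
The plan is to reuse the chain of inequalities already derived inside the proof of Theorem~\ref{theorem:1} and then substitute the explicit spectral data for the homogeneous linear network. Recall that that proof in fact established the sharper estimate
$$\lambdaup_{\max}(\Pi)\ \le\ \frac{d^{2}}{(\lambdaup_{\min}(X)+d+y)^{2}}\,\lambdaup_{\max}\bigl((X+Y)^{-1}\bigr),$$
with $d=\max_{i\in\hN}X_{ii}$ and $y=\min_{i\in\hN}y_{i}$, before weakening it to $\lambdaup_{\max}((X+Y)^{-1})$. Combined with $\text{PoSA}_{\max}(X,Y)=\tfrac12\lambdaup_{\max}(\Pi)$ from \eqref{eq:nprosa}, it therefore suffices to (i) evaluate $d$, (ii) evaluate $\lambdaup_{\min}(X)$, and (iii) upper bound $\lambdaup_{\max}((X+Y)^{-1})$, all for the linear network with common line reactance $a$.

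First I would note that for this network the diagonal entries are $X_{ii}=\sum_{k=0}^{i-1}x_{k,k+1}=ia$, which is increasing in $i$, so $d=X_{nn}=na$. Next, $\lambdaup_{\min}(X)=1/\lambdaup_{\max}(X^{-1})$, and by Lemma~\ref{special case} the eigenvalues of $X^{-1}$ are $\lambdaup_{k}=\tfrac{2}{a}\bigl(1+\cos\tfrac{2k\pi}{2n+1}\bigr)$ for $k=1,\dots,n$; since $\cos$ is strictly decreasing on $(0,\pi)$ and each angle $\tfrac{2k\pi}{2n+1}$ lies in that interval, the largest eigenvalue is the one at $k=1$, giving $\lambdaup_{\max}(X^{-1})=\tfrac{2}{a}\bigl(1+\cos\tfrac{2\pi}{2n+1}\bigr)$ and hence $\lambdaup_{\min}(X)=\dfrac{a}{2+2\cos\frac{2\pi}{2n+1}}$. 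Finally, exactly as in the Remark after Theorem~\ref{theorem:1}, Weyl's inequality gives $\lambdaup_{\max}((X+Y)^{-1})=1/\lambdaup_{\min}(X+Y)\le 1/(\lambdaup_{\min}(X)+y)$.

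Substituting $d=na$, the value of $\lambdaup_{\min}(X)$, and this last bound into the displayed estimate yields an explicit upper bound of precisely the claimed form (up to the normalization factor $\tfrac12$ of \eqref{eq:nprosa}). I do not anticipate a genuine obstacle: the statement is essentially a corollary of Theorem~\ref{theorem:1} and Lemma~\ref{special case}. The one place to be slightly careful is the monotonicity argument identifying $k=1$ as the maximizer of $\lambdaup_{k}(X^{-1})$ — equivalently, that the smallest eigenvalue of $X$ is $a/(2+2\cos\frac{2\pi}{2n+1})$ — and keeping track of whether one wants the crude bound $\lambdaup_{\max}((X+Y)^{-1})\le 1/y$ (valid for any $Y$) or the sharper $1/(\lambdaup_{\min}(X)+y)$ used here; everything else is direct substitution.
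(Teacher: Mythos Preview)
Your proposal is correct and follows essentially the same approach as the paper: both combine the sharper intermediate estimate from the proof of Theorem~\ref{theorem:1} with Weyl's inequality $\lambdaup_{\min}(X+Y)\ge\lambdaup_{\min}(X)+y$, then insert $d=na$ and $\lambdaup_{\min}(X)=a/(2+2\cos\tfrac{2\pi}{2n+1})$ from Lemma~\ref{special case}. You are in fact more explicit than the paper about why $k=1$ gives the extremal eigenvalue and about the factor $\tfrac12$ (which the paper's stated bound simply absorbs, yielding a slightly weaker but still valid inequality).
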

	\begin{proof}
	By the inequality \eqref{upper bound} and the fact that $\lambdaup_{\min}(X+Y) \geq \lambdaup_{\min}(X) + y$, we have 
	\begin{align}\label{temp}
		\text{PoSA}_{\max}(X,Y) \leq  \frac{d^{2}}{(\lambdaup_{\min}(X)+d+y)^{2}(\lambdaup_{\min}(X) + y)}.
	\end{align}
	By Lemma \ref{special case}, $\lambdaup_{\min}(X) = \frac{a}{2+2\cos\frac{2\pi}{2n+1}}$ when $k=1$. Substitute it into \eqref{temp} to get the result.
	\end{proof}
	
	\begin{figure}[ht!]
	\center{
		\includegraphics[trim={0mm 0 0mm 0mm},clip,scale = 0.32]{./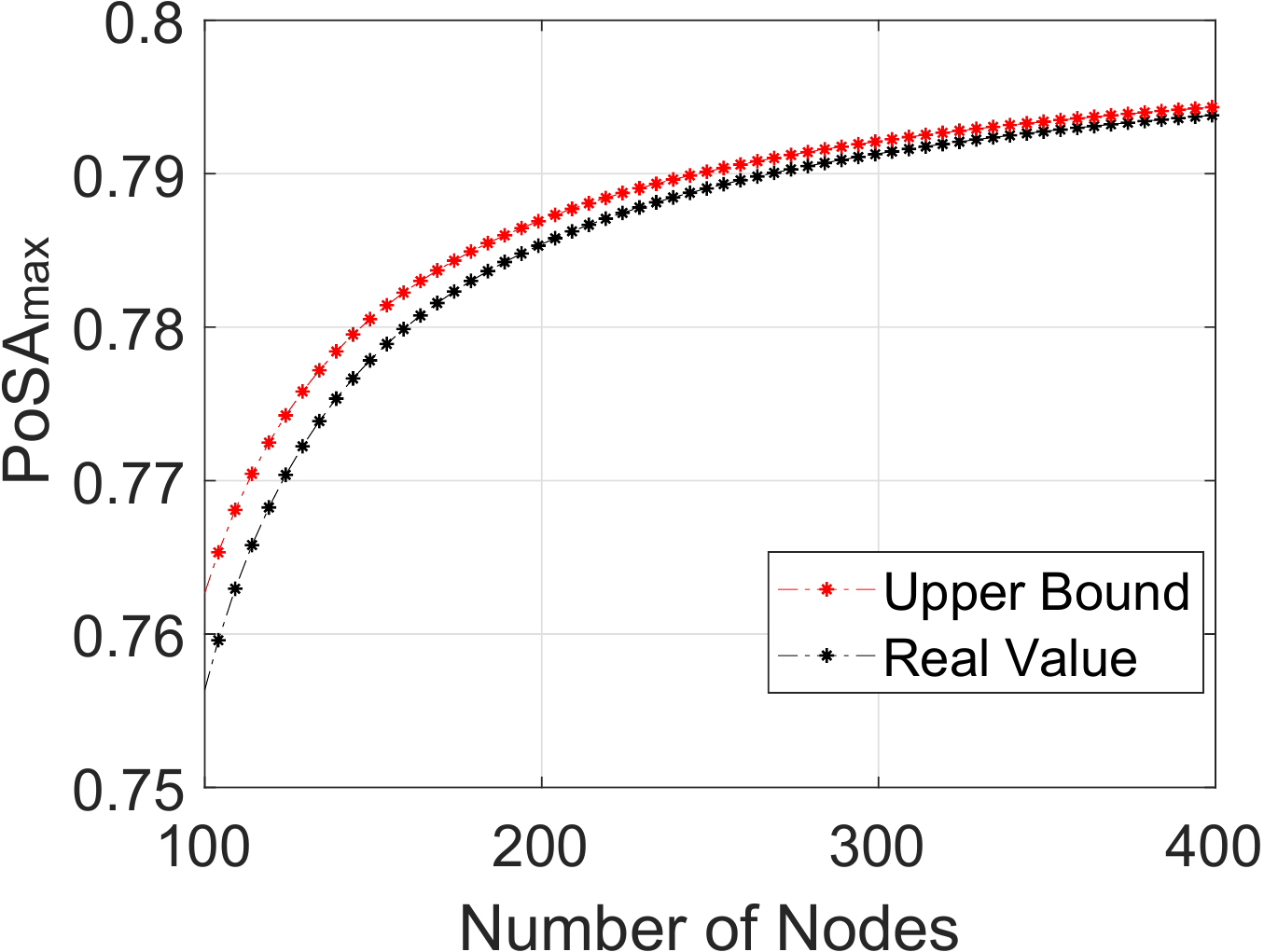}\hspace*{-3pt}
				\includegraphics[trim={0mm 0 0mm 0mm},clip,scale = 0.32]{./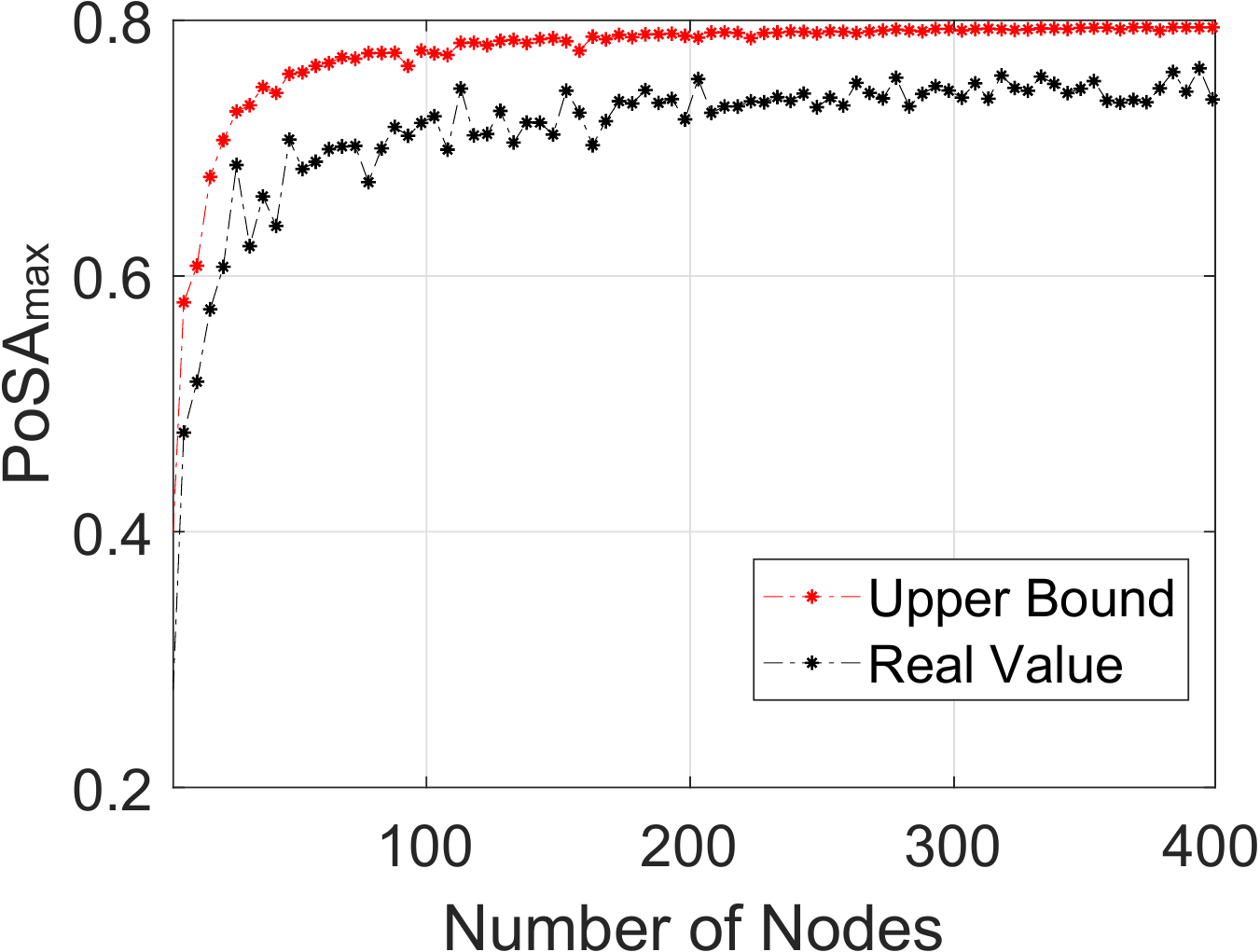} 
		\caption{Upper bound on  $\text{PoSA}_{\max}$ for linear network with $x_{ij} = 1,~y_i = 1$ (left) and with $x_{ij} = 1$ and randomly chosen $y_i \in [1,1.5]$ (right).} 
		\label{fig:fig8}}
	\end{figure}
	
	Notice that, if the eigenvectors corresponding to the minimum eigenvalues of $X$ and $Y$ are aligned, the bound given in Theorem \ref{thm:bfl} will be tighter. This can happen when $Y \approx \gamma I$ for certain $\gamma>0$, as shown in Fig.~\ref{fig:fig8} (left). Otherwise, if the cost coefficients $y_i$ of nodes span a large range, the corresponding eigenvectors to the minimum eigenvalues will unlikely be aligned and the bound will be less tight. This can be seen from Fig.~\ref{fig:fig8} (right) for a network with randomly chosen cost coefficients. 
	

	Lemma \ref{special case} considers the special case with all line reactances $x_{ij}$ taking the same value.  We next consider a more general setting with $x_{ij} \in [a,b]$ for certain $b\geq a >0$. 
	\begin{lemma}\label{theorem2}
		If the line reactances $x_{ij} \in [a,b]$ in the linear network, we can bound the $k$-th largest eigenvalue $\lambdaup_{k}$ of $X$ as follows:
		\begin{equation*}
		\begin{aligned}
		\lambdaup_{k}^{lower}	\leq \lambdaup_{k} \leq \lambdaup_{k}^{upper},~~k = 1,~\cdots,~n,
		\end{aligned}
		\end{equation*}
		where 
		\begin{equation*}
		    \lambdaup_{k}^{lower} = \frac{a}{2+2\cos\frac{2(n-k+1)\pi}{2n+1}}	~\text{ and }~
		   \lambdaup_{k}^{upper} = \frac{b}{2+2\cos\frac{2(n-k+1)\pi}{2n+1}}.
		\end{equation*}
	\end{lemma}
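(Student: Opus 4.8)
The plan is to exploit the tridiagonal structure of $X^{-1}$ and a monotonicity argument in the line reactances. Recall from the expression for $X^{-1}$ in the linear case that $X^{-1} = L$, where $L$ is the weighted graph Laplacian of the path graph with edge weights $1/x_{i,i+1}$ (with a grounded node $0$); concretely $L_{kk} = 1/x_{k-1,k} + 1/x_{k,k+1}$ for $k<n$, $L_{nn} = 1/x_{n-1,n}$, and $L_{k,k+1} = L_{k+1,k} = -1/x_{k,k+1}$. The eigenvalues of $X$ are the reciprocals of the eigenvalues of $L$, so it suffices to bound the eigenvalues of $L$ from above and below, which by reciprocation gives the stated bounds on $\lambdaup_k$ (note the index reversal: the $k$-th largest eigenvalue of $X$ corresponds to the $k$-th smallest of $L$, i.e. the $(n-k+1)$-th largest of $L$).

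First I would establish a Loewner monotonicity fact: if $x_{ij} \le x_{ij}'$ for every edge (entrywise), then $L' \preceq L$, where $L'$ is the Laplacian with the larger reactances. This follows because a weighted path Laplacian is $\sum_{(i,j)} \frac{1}{x_{ij}} (e_i - e_j)(e_i-e_j)^\top$ (with $e_0 := 0$ for the grounded edge at node $0$), a nonnegative combination of rank-one PSD matrices, and $1/x_{ij}$ is decreasing in $x_{ij}$; hence replacing each $x_{ij}$ by $a \le x_{ij}$ only increases $L$ in the Loewner order, and replacing by $b \ge x_{ij}$ only decreases it. So if $L_a$ and $L_b$ denote the Laplacians with all reactances equal to $a$ and $b$ respectively, then $L_b \preceq L \preceq L_a$. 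By Weyl's monotonicity theorem, $\lambdaup_k(L_b) \le \lambdaup_k(L) \le \lambdaup_k(L_a)$ for every $k$ (eigenvalues in a fixed, say decreasing, order).

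Second I would invoke Lemma \ref{special case}: when all reactances equal $c$, the eigenvalues of $X^{-1} = L_c$ are $\frac{2}{c}\big(1 + \cos\frac{2k\pi}{2n+1}\big)$, $k = 1,\dots,n$. Applying this with $c=a$ and $c=b$ and using the sandwich $L_b \preceq L \preceq L_a$, every eigenvalue $\mu$ of $L$ satisfies $\frac{2}{b}(1+\cos\theta_j) \le \mu_j \le \frac{2}{a}(1+\cos\theta_j)$ for the matching order index $j$, with $\theta_j = \frac{2j\pi}{2n+1}$. Taking reciprocals and matching the $k$-th largest eigenvalue of $X$ with the $(n-k+1)$-th eigenvalue in the natural ordering of $L_c$ — so that $\cos\frac{2j\pi}{2n+1}$ becomes $\cos\frac{2(n-k+1)\pi}{2n+1}$ — yields exactly $\lambdaup_k^{lower} = \frac{a}{2+2\cos\frac{2(n-k+1)\pi}{2n+1}} \le \lambdaup_k \le \frac{b}{2+2\cos\frac{2(n-k+1)\pi}{2n+1}} = \lambdaup_k^{upper}$.

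The main obstacle I anticipate is bookkeeping rather than mathematics: getting the index correspondence right between "the $k$-th largest eigenvalue of $X$" and the appropriate eigenvalue of $L = X^{-1}$, since inversion reverses the order, and then confirming that the cosine term in Lemma \ref{special case} lands on the index $n-k+1$ as claimed. I would double-check this on the endpoints $k=1$ (largest eigenvalue of $X$, smallest of $L$, index $n$ giving $\cos\frac{2n\pi}{2n+1}$, consistent with $n-k+1=n$) and $k=n$. Everything else — the Loewner ordering and Weyl's inequality — is routine once the rank-one decomposition of the path Laplacian is written down.
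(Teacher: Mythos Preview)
Your proposal is correct and follows the same overall architecture as the paper's proof: establish a Loewner sandwich between the homogeneous extremes and the heterogeneous $X$, invoke Weyl's monotonicity, and then read off the eigenvalues from Lemma~\ref{special case}. The only real difference is where you run the monotonicity argument. The paper works directly on $X$: increasing a single $x_{i-1,i}$ by $\epsilon$ adds the rank-one block $\epsilon\,\mathbf 1_{\{i,\dots,n\}}\mathbf 1_{\{i,\dots,n\}}^{\!\top}$ to $X$, which is PSD, so $X(a)\preceq X\preceq X(b)$ immediately. You instead work on $L=X^{-1}$ via the Laplacian rank-one decomposition $\sum_e \tfrac{1}{x_e}(e_i-e_j)(e_i-e_j)^{\!\top}$ and then pass back through reciprocals. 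Both decompositions are valid and dual to one another; the paper's route spares you the index-reversal bookkeeping you flagged (since the comparison is already at the level of eigenvalues of $X$), while your Laplacian argument generalises more transparently beyond linear networks to arbitrary trees. Either way the remaining step---matching the $k$-th largest eigenvalue of $X$ with the $(n-k+1)$-th formula in Lemma~\ref{special case}---is exactly as you describe, and your endpoint check at $k=1$ and $k=n$ is the right sanity test.
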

	\begin{proof}
	First, notice that for a linear network, if any of the $x_{ij}$ increases, all eigenvalues of $X$ will not decrease. To see this, suppose that one $x_{ij}$ increases by $\epsilon$. Then, the new reactance matrix $X^{\prime}$ can be written as
		\begin{equation*}
		\begin{aligned}
		X^{\prime} = X +\underbrace{ \begin{bmatrix}
			0&\cdots&\cdots&\cdots\\
			\vdots&\ddots&\cdots&\cdots\\
			\vdots&\vdots&\epsilon&\epsilon\\
			\vdots&\vdots&\epsilon&\ddots
			\end{bmatrix}}_{P}. \nonumber
		\end{aligned}
		\end{equation*}
		Notice that $P$ is a positive semi-definite matrix, So, the eigenvalues of $X^{\prime}$ will not be smaller than those of $X$, respectively. 
		

	If $x_{ij}$ randomly takes value in $[a,b]$, denote by $X(a)$ the reactance matrix with all $x_{ij} = a$ and $X(b)$ the matrix with all $x_{ij} = b$,  we have the following inequality to lower and upper bound $k$-th largest eigenvalue $\lambdaup_{k}(X)$ with the corresponding $k$-th largest eigenvalues of $X(a)$ and $X(b)$: 
	\begin{eqnarray}
	\lambdaup_{i}(X(a)) \leq \lambdaup_{i}(X) \leq \lambdaup_{i}(X(b)). \nonumber
	\end{eqnarray}
	The result then follows from Lemma \ref{special case}. 
	\end{proof}
	
	 By Lemma \ref{theorem2}, the following result is immediate. 
	 \begin{theorem}
	 For a linear network with the line reactances $x_{ij} \in  [a,b]$, the  $\text{PoSA}_{\max}$ is bounded by 
	\begin{equation*} 
	\begin{aligned}
	PoSA_{\max}(X,Y) &\leq \frac{d^{2}}{(\lambdaup_{n}^{\text{lower}}+d+y)^{2}(y+\lambdaup_{n}^{\text{lower}})},\\
	\end{aligned}
	\end{equation*} 
	\end{theorem}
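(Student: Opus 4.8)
The plan is to read off the claimed inequality as a direct corollary of the estimate already obtained for $\text{PoSA}_{\max}$ in the proof of Theorem~\ref{thm:bfl} together with the eigenvalue localization of Lemma~\ref{theorem2}. Recall that, chaining the intermediate bound from the proof of Theorem~\ref{theorem:1} with Weyl's inequality $\lambdaup_{\max}((X+Y)^{-1})=1/\lambdaup_{\min}(X+Y)\le 1/(\lambdaup_{\min}(X)+y)$, one arrives at inequality~\eqref{temp}, namely
\begin{equation*}
\text{PoSA}_{\max}(X,Y)\le\frac{d^{2}}{(\lambdaup_{\min}(X)+d+y)^{2}(\lambdaup_{\min}(X)+y)}.
\end{equation*}
This holds for \emph{any} reactance matrix $X$, in particular for the linear network with $x_{ij}\in[a,b]$, so the only thing left is to lower-bound $\lambdaup_{\min}(X)$ in this regime.

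First I would apply Lemma~\ref{theorem2} with $k=n$: since the $n$-th largest eigenvalue of $X$ is exactly its smallest, $\lambdaup_{\min}(X)=\lambdaup_{n}(X)\ge\lambdaup_{n}^{\mathrm{lower}}=\tfrac{a}{2+2\cos\frac{2\pi}{2n+1}}$. Then I would observe that the real function
\begin{equation*}
\varphi(t)=\frac{d^{2}}{(t+d+y)^{2}(t+y)}
\end{equation*}
is strictly decreasing on $t>0$, because both $(t+d+y)^{2}$ and $(t+y)$ are positive and increasing. Hence replacing $\lambdaup_{\min}(X)$ by the possibly smaller quantity $\lambdaup_{n}^{\mathrm{lower}}$ in \eqref{temp} can only enlarge the right-hand side, and we conclude
\begin{equation*}
\text{PoSA}_{\max}(X,Y)\le\varphi\!\left(\lambdaup_{n}^{\mathrm{lower}}\right)=\frac{d^{2}}{(\lambdaup_{n}^{\mathrm{lower}}+d+y)^{2}(y+\lambdaup_{n}^{\mathrm{lower}})},
\end{equation*}
which is the claim.

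I do not expect any genuine obstacle: the statement is a corollary, and the proof is two lines once \eqref{temp} and Lemma~\ref{theorem2} are in hand. The single point deserving care is the monotonicity argument — one must make sure that substituting a \emph{lower} bound for $\lambdaup_{\min}(X)$ produces a \emph{larger}, hence still valid, upper bound; this is exactly the fact that $\varphi$ is decreasing. If one wanted the bound entirely in terms of $a,b,n,y$ one could further use $d=\max_{i}X_{ii}=\sum_{j=0}^{n-1}x_{j,j+1}\le nb$, but the statement keeps $d$ symbolic, so no such replacement is required.
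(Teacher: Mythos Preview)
Your proposal is correct and follows exactly the route the paper takes: the paper simply declares the result ``immediate'' from Lemma~\ref{theorem2}, and what you have written is precisely the two-line unpacking of that claim --- apply \eqref{temp} and then replace $\lambdaup_{\min}(X)$ by the smaller quantity $\lambdaup_{n}^{\mathrm{lower}}$ using monotonicity of the right-hand side.
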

	
	Similarly, the tightness of the above bound depends on the heterogeneity of line reactances $x_{ij}$ and cost coefficient $y_i$.
	
	{ The above analytical and numerical results show that the $\text{PoSA}_{\max}$, i.e., the efficiency loss from the signal-anticipating voltage control, is upper bounded by a constant, independent of the size of the network, and the average $\text{PoSA}_{\max}$ per node goes to zero as the size of the network increases. This is desirable as it means that the $\text{PoSA}_{\max}$ will not be arbitrarily large, no matter what the size of the network is, and no mechanism is needed to mitigate the signal-anticipating behavior.}

{		
\begin{remark}	
		 if it happens that the efficiency loss of the signal-anticipating control is large, under the engineering perspective where the signal-anticipation comes from the limited/partial information at individual nodes, the distribution system operator may just tell each nodes to be signal-taking; and under the economic perspective where the signal-anticipation comes from the nodes being self-interested and strategic, the  distribution system operator may design proper incentive mechanisms such as pricing to mitigate the impact of the signal-anticipating behavior and to induce an efficient systemwide outcome. 
\end{remark}
}

	\begin{figure}
		\centering
		\includegraphics[scale=0.265]{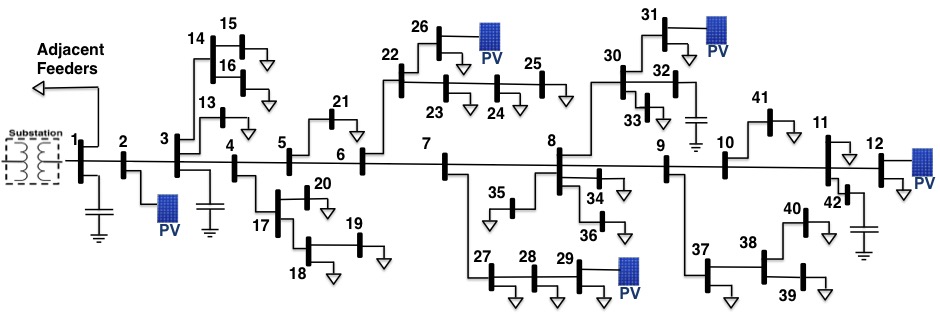}
		\caption{Circuit diagram for SCE distribution system.}
		\label{42bus}
	\end{figure}
{
	\section{Numerical Experiments with Real-world Circuit}
	In this section, we consider a 42-bus distribution feeder of Southern California Edison (SCE) as shown in Fig.~\ref{42bus}, to exam numerically the efficiency loss and convergence of the signal-anticipating voltage control. 
	
	As shown in Fig. \ref{42bus}, bus 1 is the substation (root bus) and five PV generators are integrated at buses 2, 12, 26, 29 and 31. Table \ref{data} describes the network data including the line impedance, the peak MVA demand of loads, and the capacity of the PV generators. As we aim to study the Volt/Var control through the PV inverters, all shunt capacitors are assumed to be off. We implement the piece-wise linear droop control function of the IEEE 1547.8 Standard, with the same slope $\alpha_i$  and deadband $[-\delta_i/2, \delta_i/2]$ at all inverters:
	\begin{equation} \label{eq:droop}
		f_i(v_i) = -\alpha_{i}[v_i - \delta_i/2]^{+} + \alpha_{i}[-v_i - \delta_i/2]^{+}.
	\end{equation}
This corresponds to the following cost function in reactive power provisioning:
\begin{equation}
		\begin{aligned}
			C_i(q_i) = \begin{cases}
			\frac{y_i}{2}q_i^2 - \frac{\delta_i}{2}q_i, & \text{if}\ q_i \leq 0, \\
			\frac{y_i}{2}q_i^2 + \frac{\delta_i}{2}q_i, & \text{if}\ q_i \geq 0,
			\end{cases} 
		\end{aligned}
	\end{equation}
where $y_i = 1/\alpha_i$. All the experiments are run with a full AC power flow model using MATPOWER \cite{zimmerman2011matpower} instead of its linear approximation. 

As we study a real-world distribution circuit, we cannot scale it to different sizes as we do in Section \ref{pose}. Instead, we will consider different values of $\alpha_i$ or equivalently $y_i$. We first exam the efficiency loss of the signal-anticipating voltage control. Fig.~\ref{fig:loss_posa} shows the PoSA$_{\text{max}}$ value versus the $y_i$ value with a deadband $\delta_i=0.02$~p.u. and capacity constraints of the PVs. Fig.~\ref{fig:loss_posan} shows  the PoSA$_{\text{max}}$ value versus the $y_i$ value for an ``ideal'' situation with no deadband (i.e., $\delta_i=0$) or PV capacity constraints. We see that the efficiency loss is very small and decreases with increasing $y_i$, as expected. 


			\begin{figure}[ht!]
		\center{
			\includegraphics[scale = 0.6]{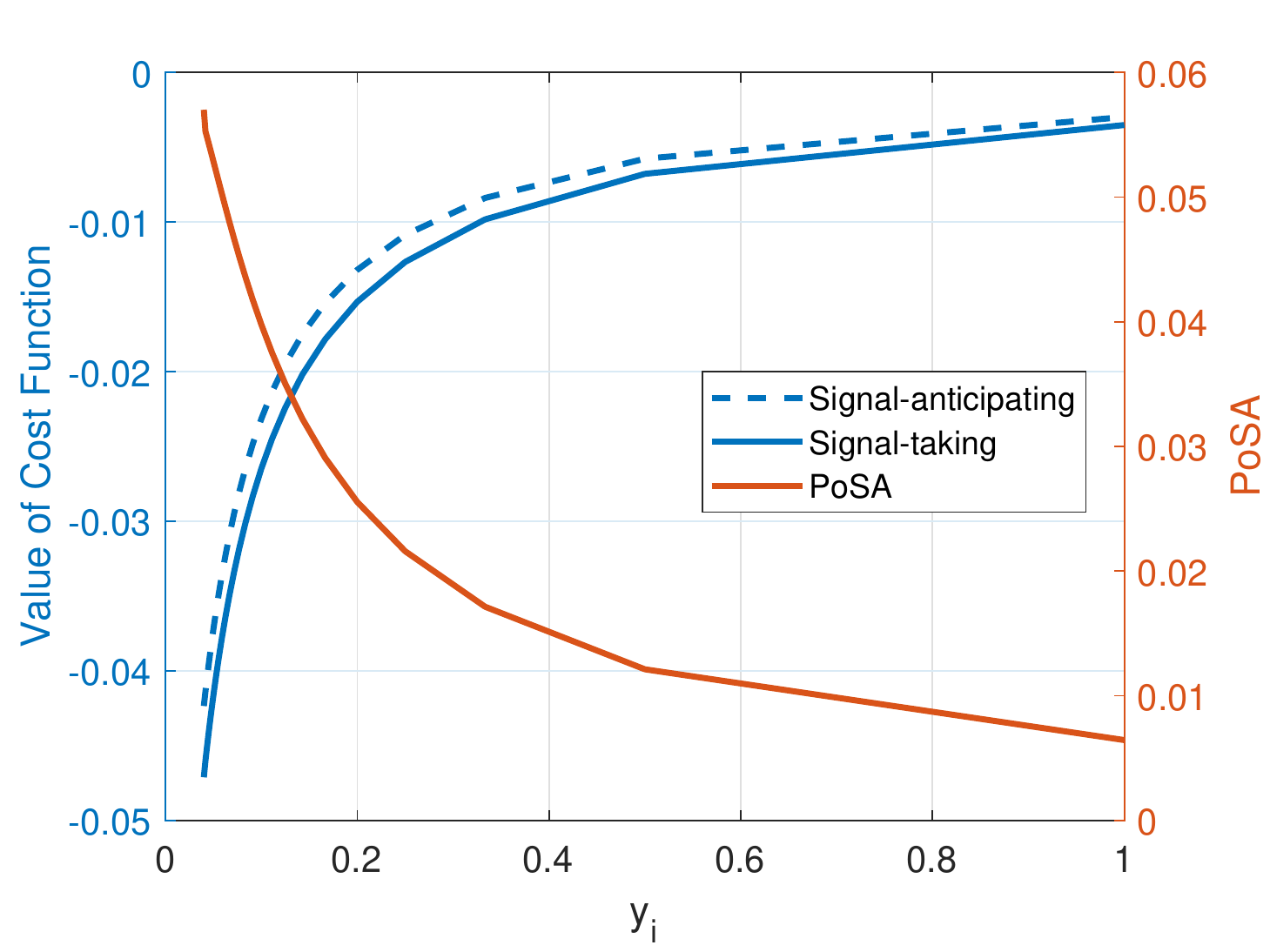}
			\caption{ Cost function values comparison and the price of signal-anticipation with a deadband $\delta_i=0.02$~p.u. and PV capacity constraints. 
			}
			\label{fig:loss_posa}}
	\end{figure}
	
	\begin{figure}[ht!]
		\center{
			\includegraphics[scale = 0.6]{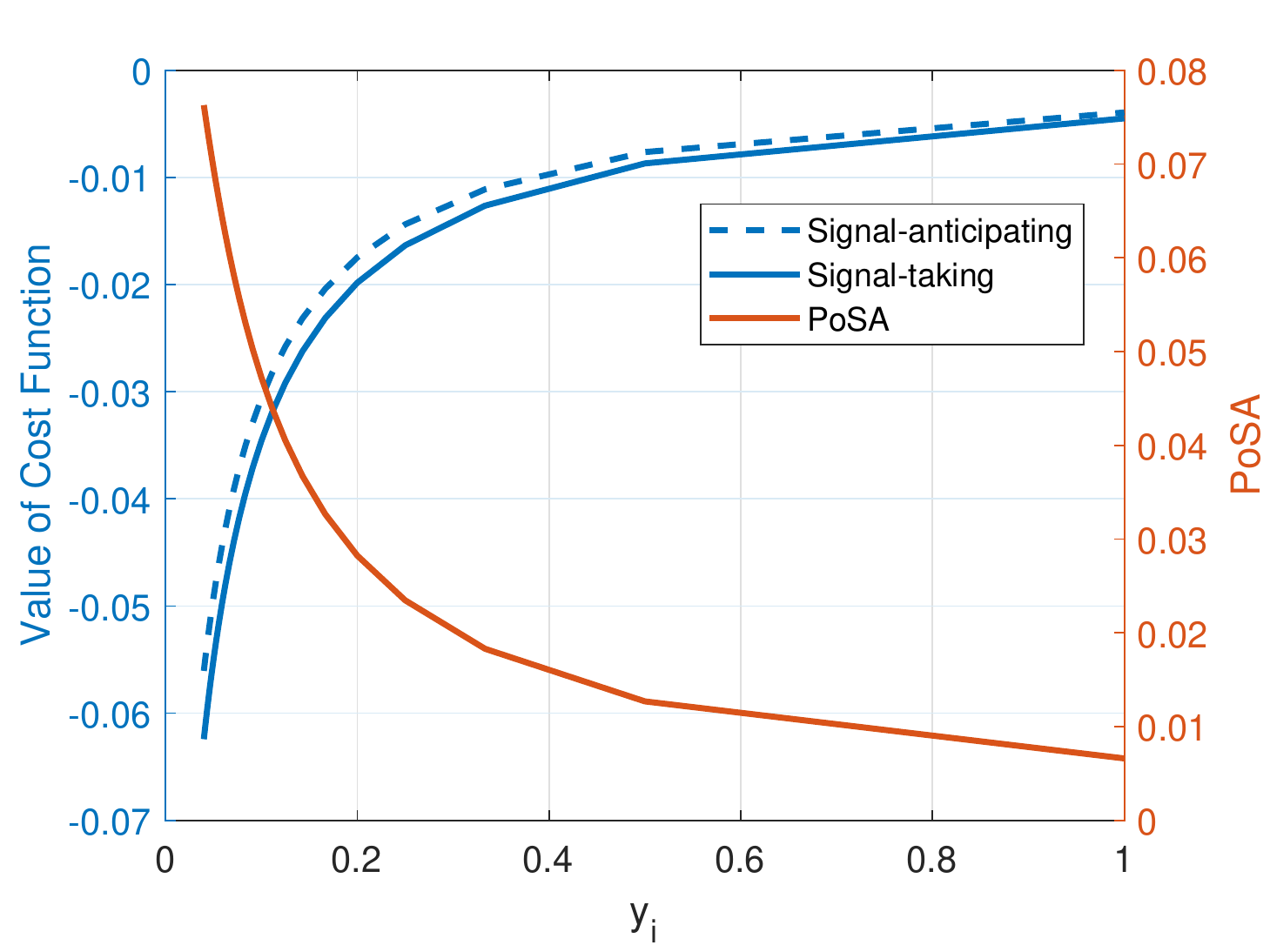}
			\caption{ Cost function values comparison and the price of signal-anticipation without deadband or PV capacity constraints.}
			\label{fig:loss_posan}}
	\end{figure}

%
	
	  	
  	We next exam the dynamical property of the signal-anticipating voltage control. Fig.~\ref{fig:exper} shows the evolution of voltages for different $\alpha_i$ values under the signal-taking and signal-anticipating voltages controls with a deadband $\delta_i=0.02$~p.u. and PV capacity constraints.  Fig.~\ref{fig:expern} shows that for the controls 
without deadband or PV capacity constraints. As shown in the subfigures (e)-(f), the signal-anticipating control has less restrictive convergence condition than the signal-taking control, which is consistent with the theoretical analysis in Section \ref{sect:con-comp}. 
	
	
	\begin{figure}[t!]
		\begin{subfigure}{0.48\columnwidth}
			\caption{Signal-taking: $\alpha_{i} = 9$.} \vspace{1pt}\vspace{-6pt}
			\label{fig:taking9}
			\includegraphics[width=\linewidth]{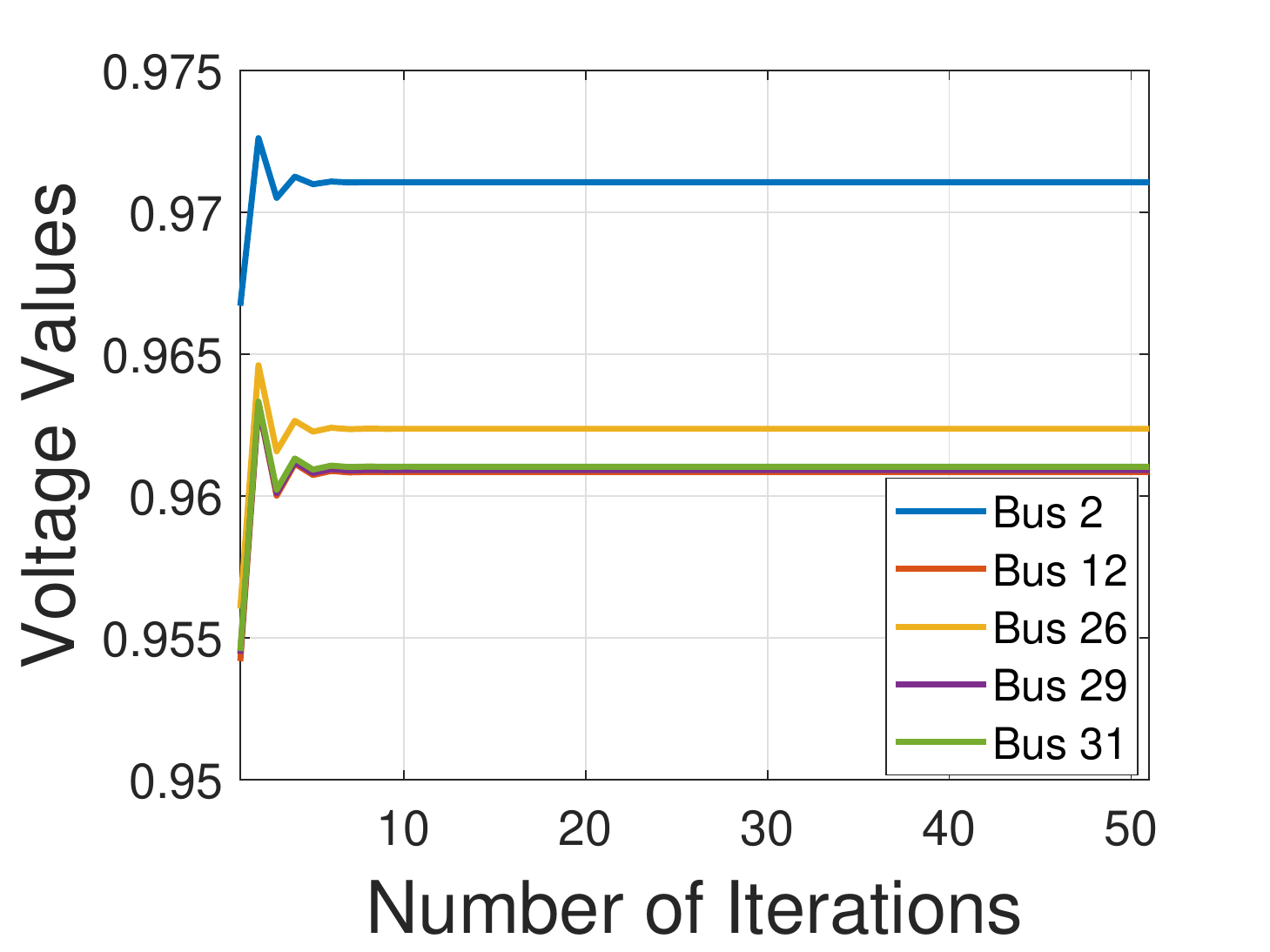}
		\end{subfigure}
		\vspace{13pt}
		\hspace*{\fill}
		\begin{subfigure}{0.48\columnwidth}
			\caption{Signal-anticipating: $\alpha_{i} = 9$.} \label{fig:anti9}\vspace{-6pt}
			\includegraphics[width=\linewidth]{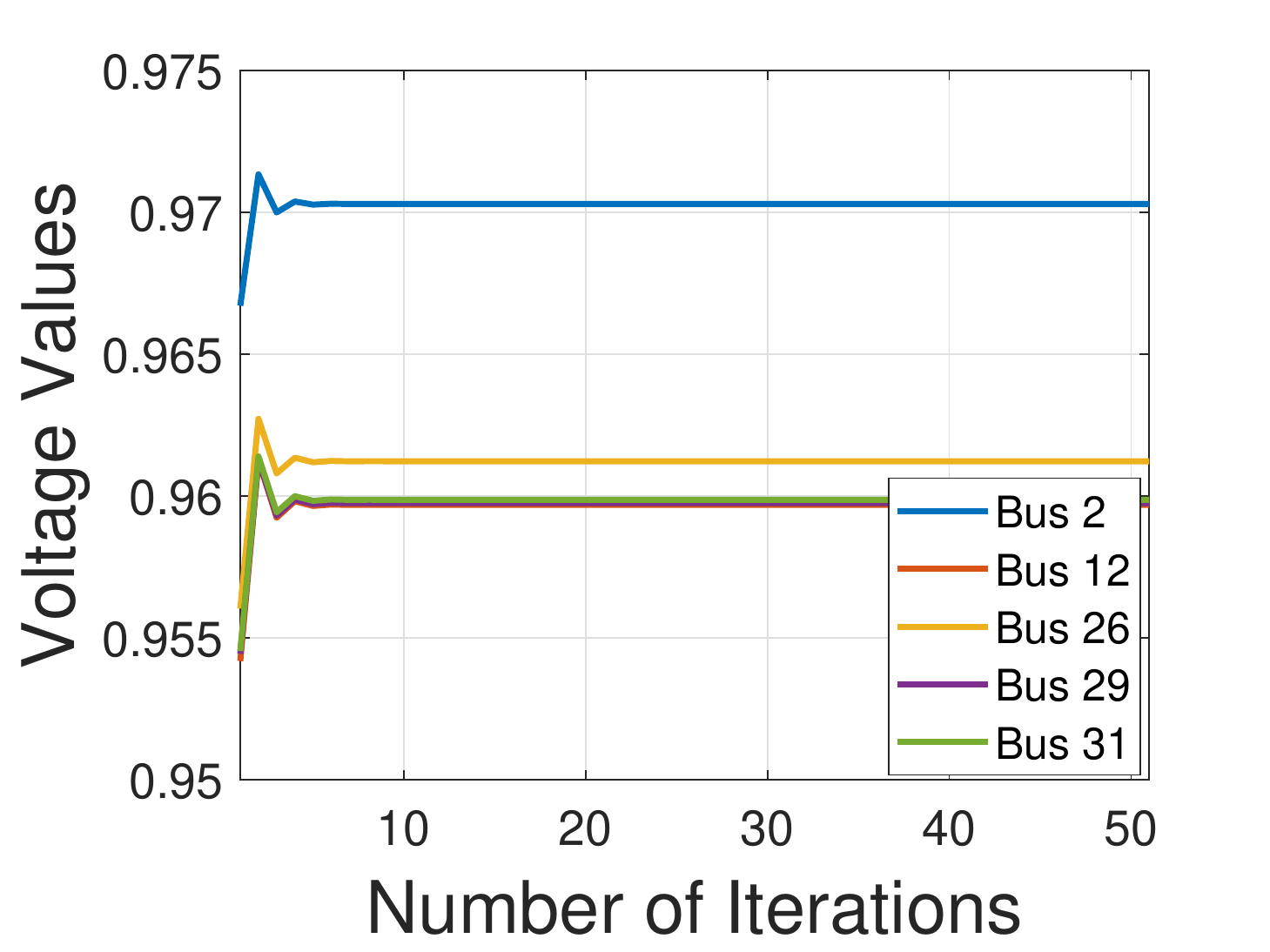}
		\end{subfigure}
		\hspace*{\fill}
		\begin{subfigure}{0.48\columnwidth}
			\caption{Signal-taking: $\alpha_{i}= 18$.} 
			\label{fig:taking18}\vspace{-6pt}
			\includegraphics[width=\linewidth]{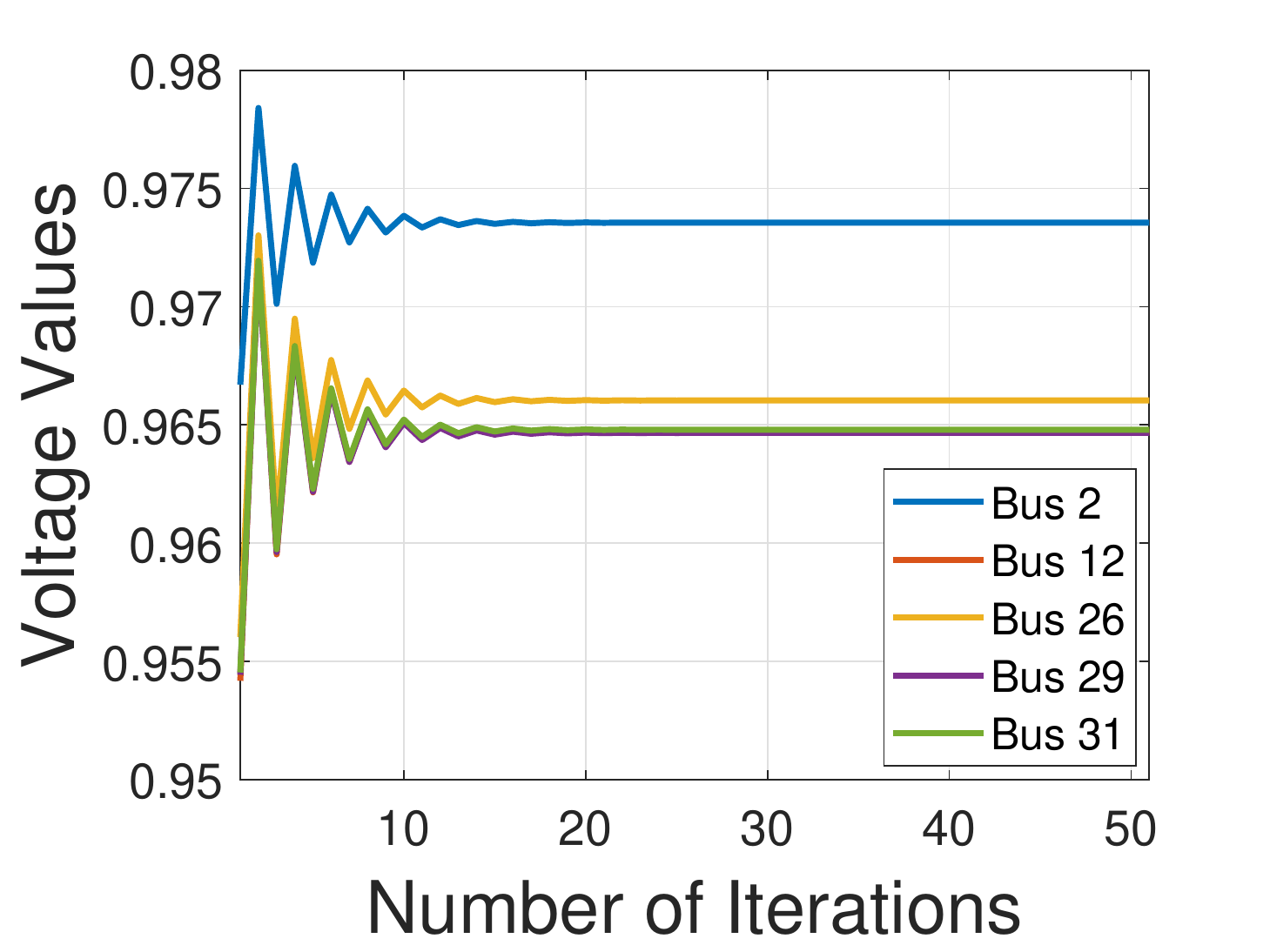}
		\end{subfigure}
	\vspace{13pt}
		\hspace*{\fill}
		\begin{subfigure}{0.48\columnwidth}
			\caption{Signal-anticipating: $\alpha_{i} = 18$.}
			\label{fig:anti18}\vspace{-6pt}
			\includegraphics[width=\linewidth]{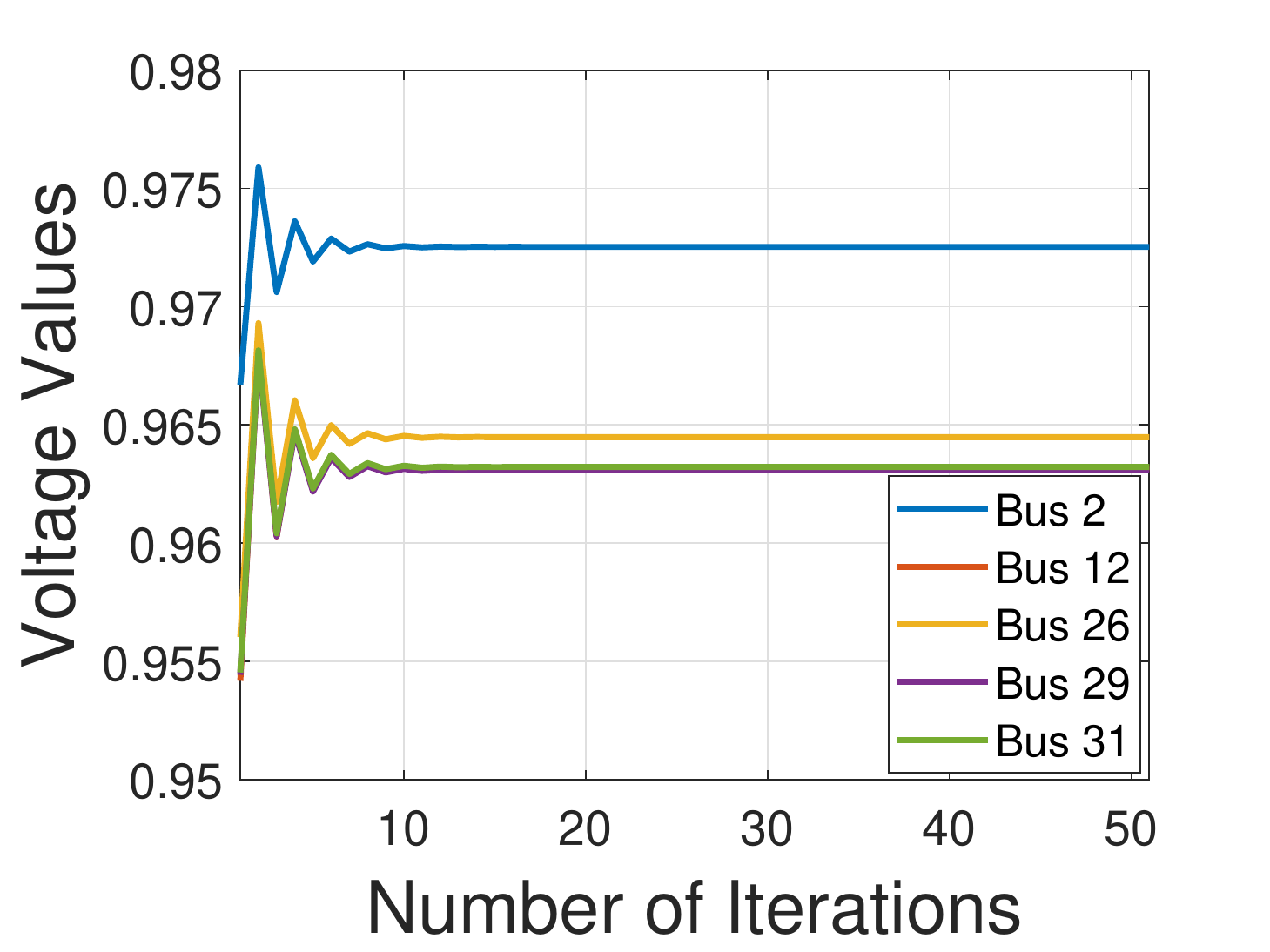}
		\end{subfigure}	
		\hspace*{\fill}
		\begin{subfigure}{0.48\columnwidth}
			\caption{Signal-taking: $\alpha_{i} = 27$.} \label{fig:taking27}\vspace{-6pt}
			\includegraphics[width=\linewidth]{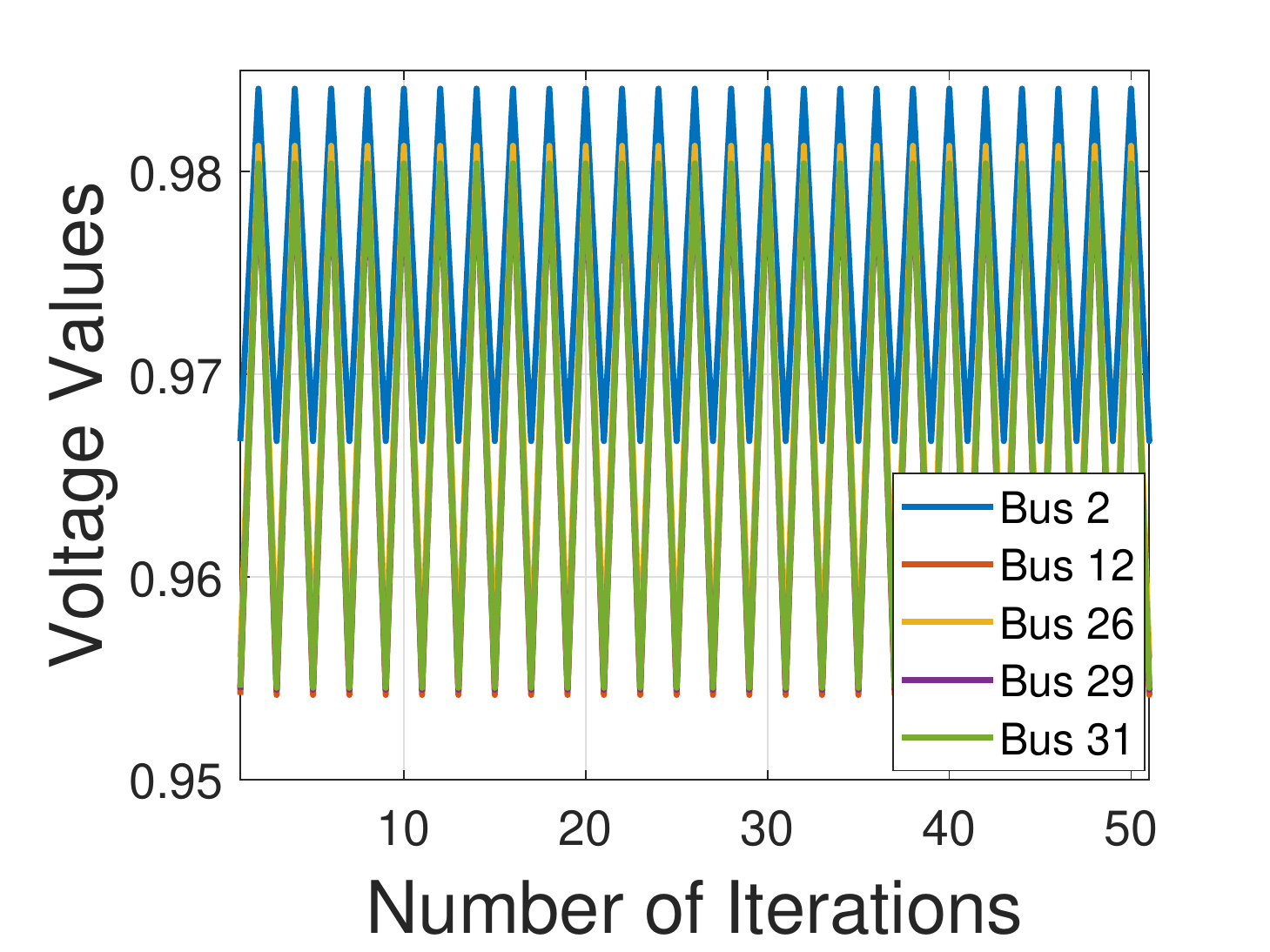}
		\end{subfigure}
		\hspace*{\fill}
		\begin{subfigure}{0.48\columnwidth}
			\caption{Signal-anticipating: $\alpha_{i}= 27$.} \label{fig:anti27}\vspace{-6pt}
			\includegraphics[width=\linewidth]{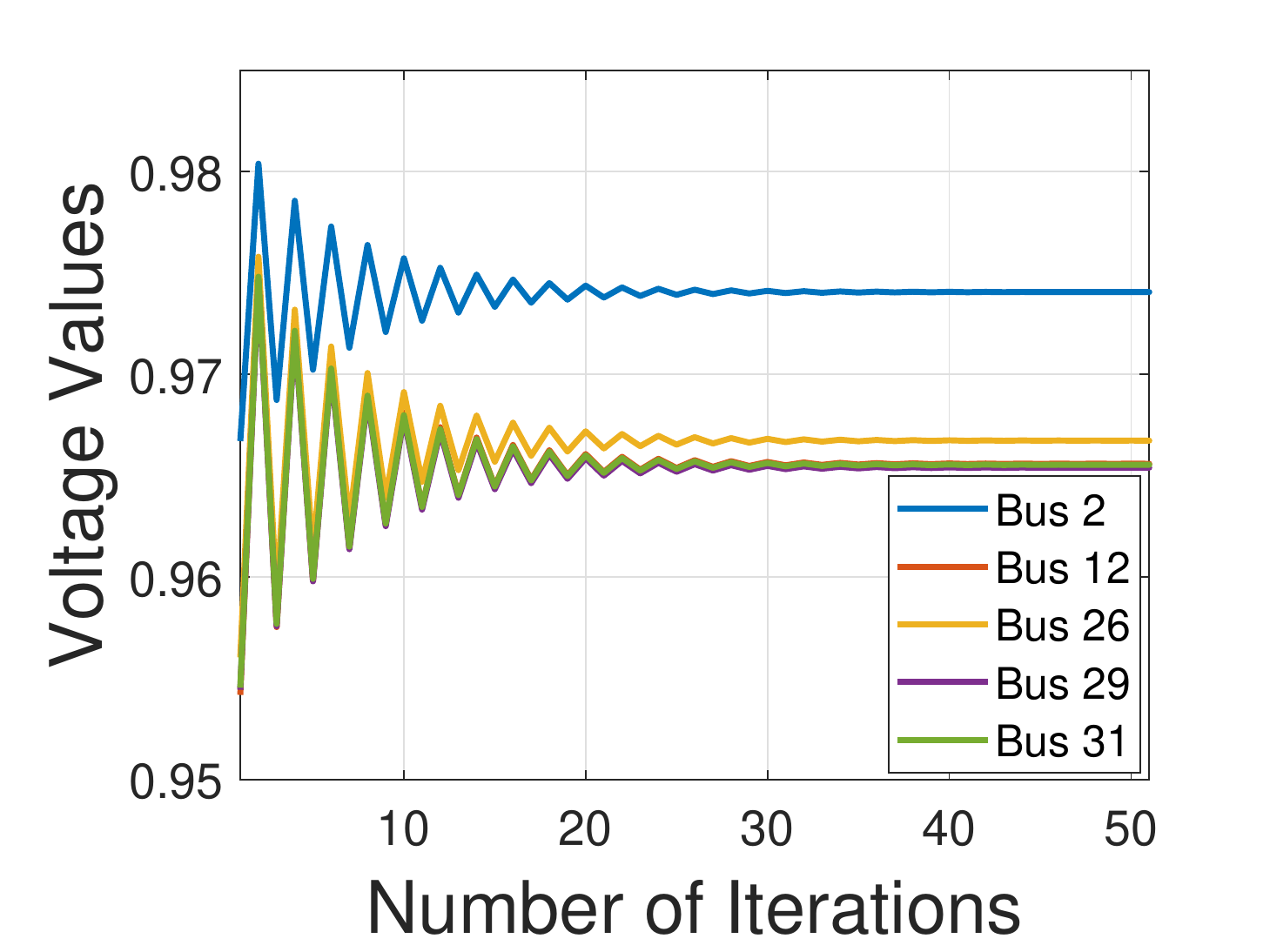}
		\end{subfigure}	
		\caption{Convergence comparison between the signal-anticipating and signal-taking voltage controls  with a deadband $\delta_i=0.02 p.u.$ and PV capacity constraints.} \label{fig:exper}
	\end{figure}

	\begin{figure}[t!]
	\begin{subfigure}{0.48\columnwidth}
		\caption{Signal-taking: $\alpha_{i} = 9$.} \vspace{1pt}\vspace{-6pt}
		\label{fig:taking9}
		\includegraphics[width=\linewidth]{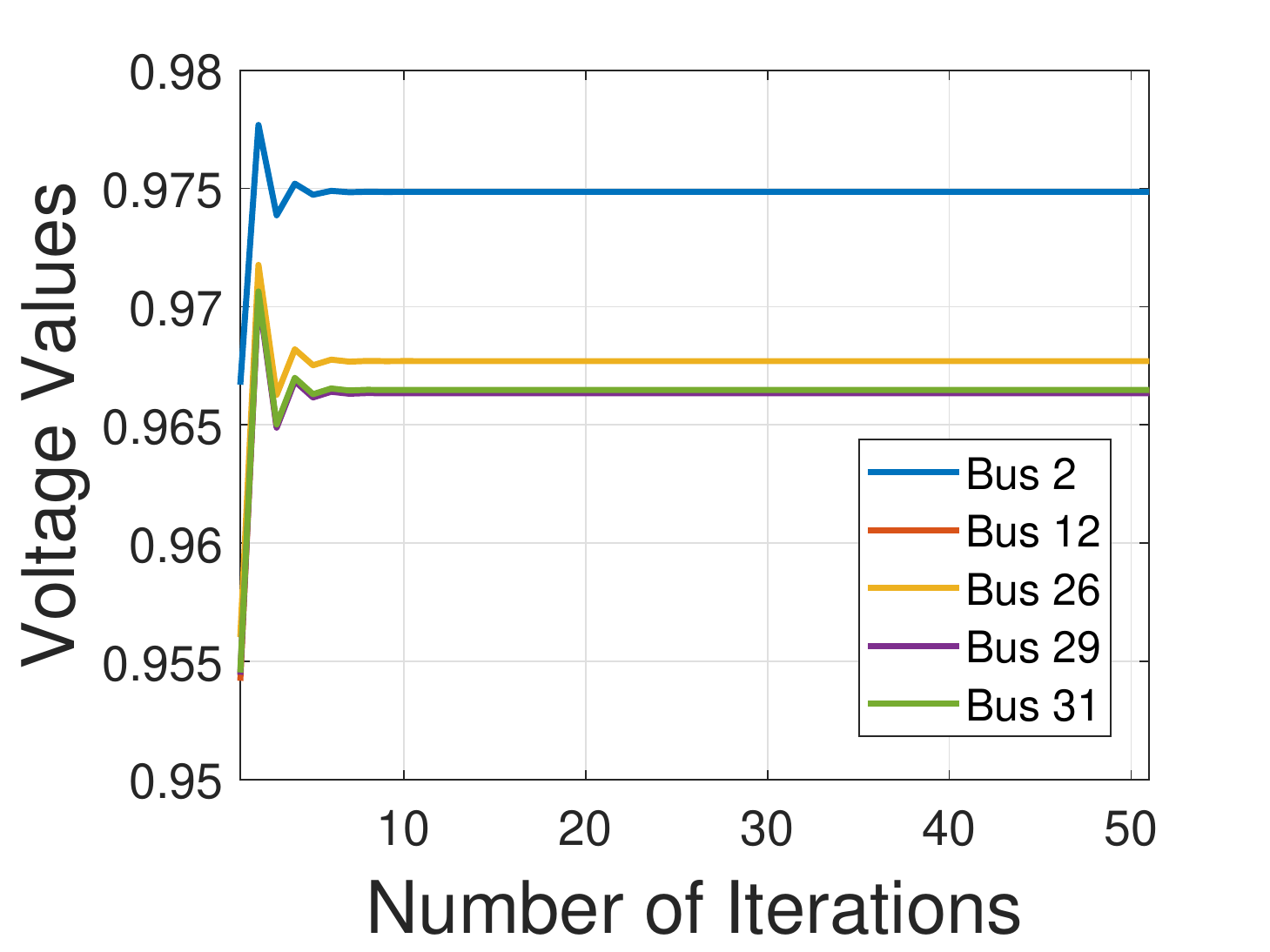}
	\end{subfigure}
	\vspace{13pt}
	\hspace*{\fill}
	\begin{subfigure}{0.48\columnwidth}
		\caption{Signal-anticipating: $\alpha_{i} = 9$.} \label{fig:anti9}\vspace{-6pt}
		\includegraphics[width=\linewidth]{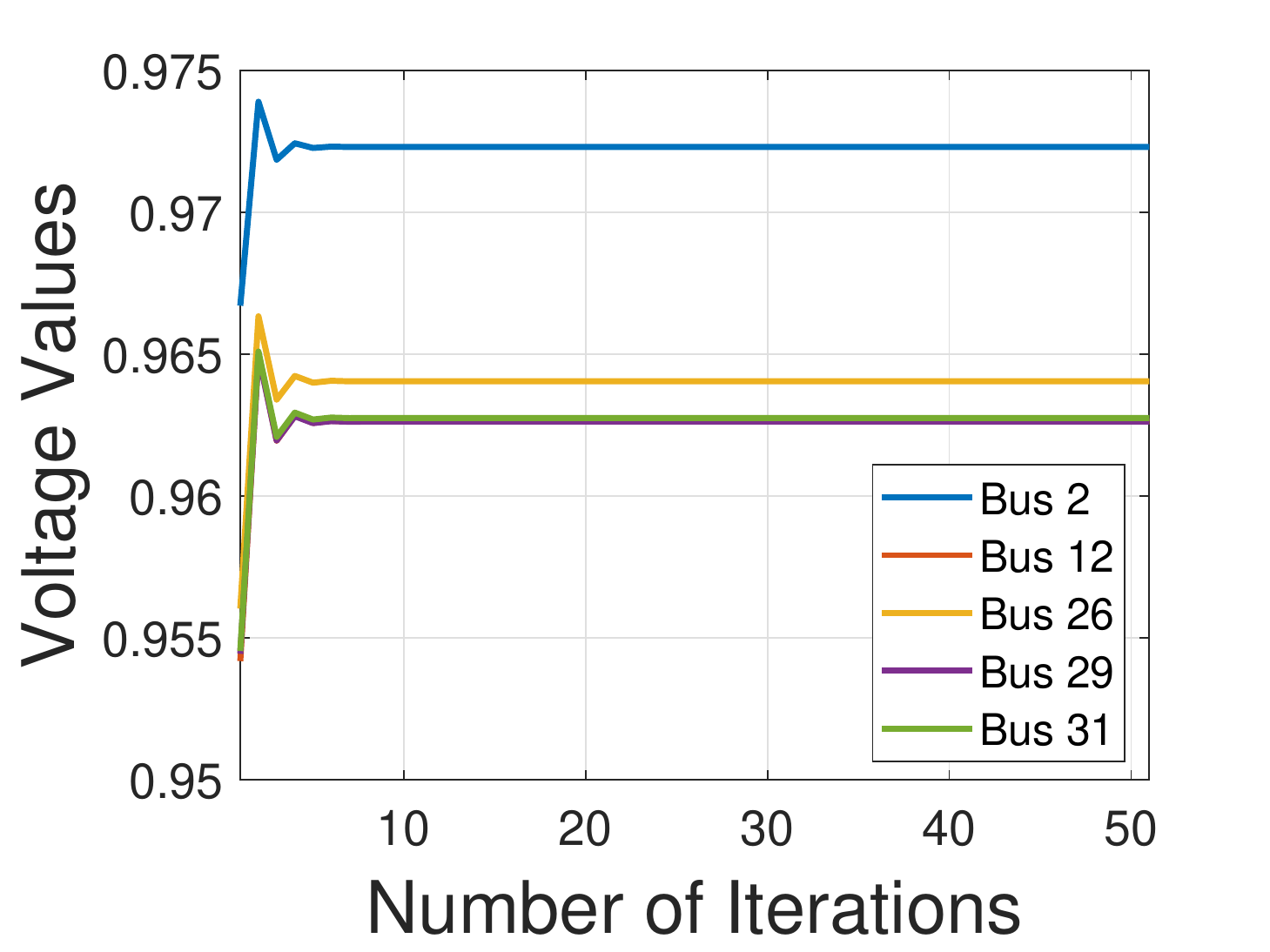}
	\end{subfigure}
	\hspace*{\fill}
	\begin{subfigure}{0.48\columnwidth}
		\caption{Signal-taking: $\alpha_{i}= 18$.} 
		\label{fig:taking18}\vspace{-6pt}
		\includegraphics[width=\linewidth]{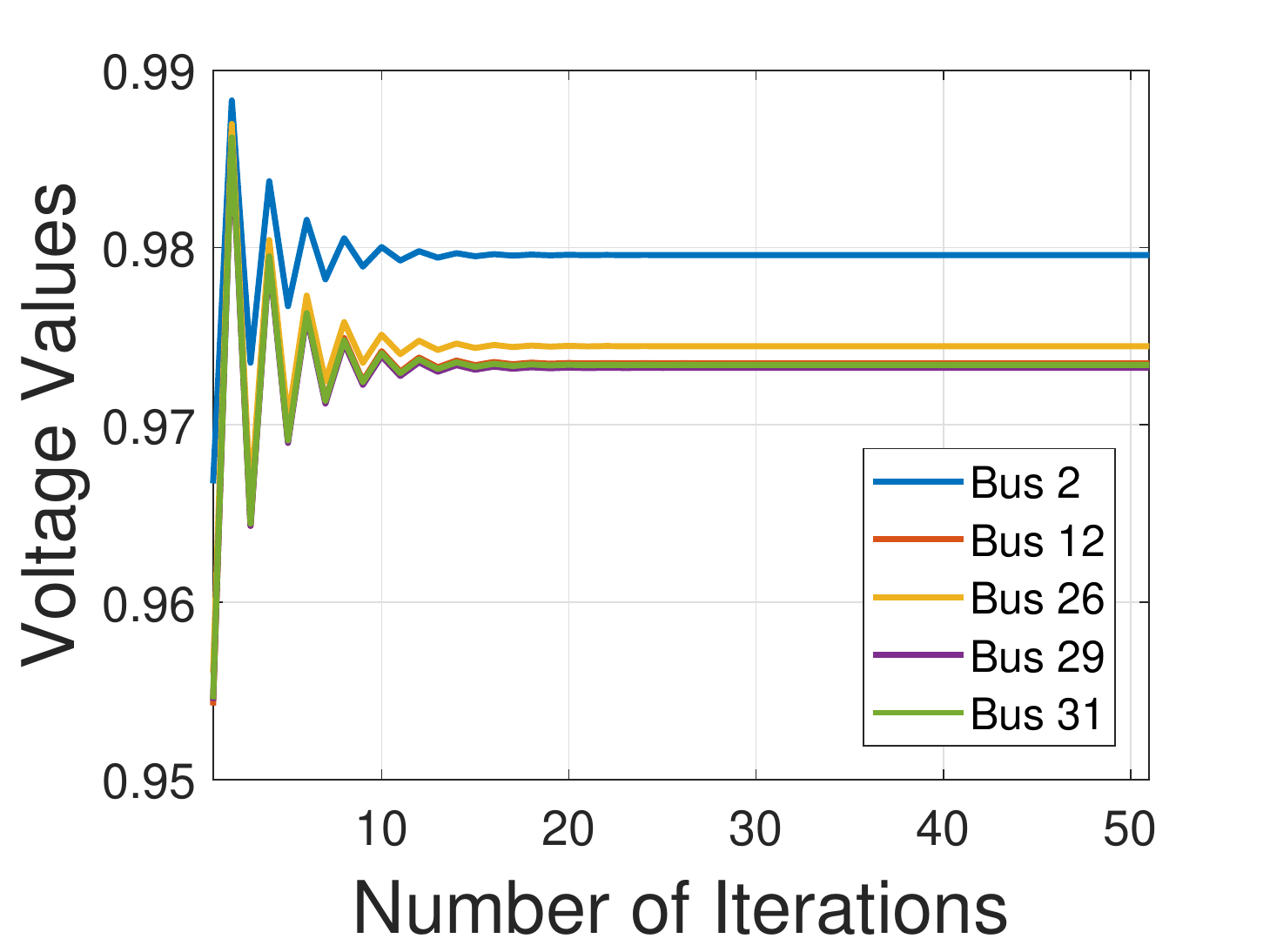}
	\end{subfigure}
	\vspace{13pt}
	\hspace*{\fill}
	\begin{subfigure}{0.48\columnwidth}
		\caption{Signal-anticipating: $\alpha_{i} = 18$.}
		\label{fig:anti18}\vspace{-6pt}
		\includegraphics[width=\linewidth]{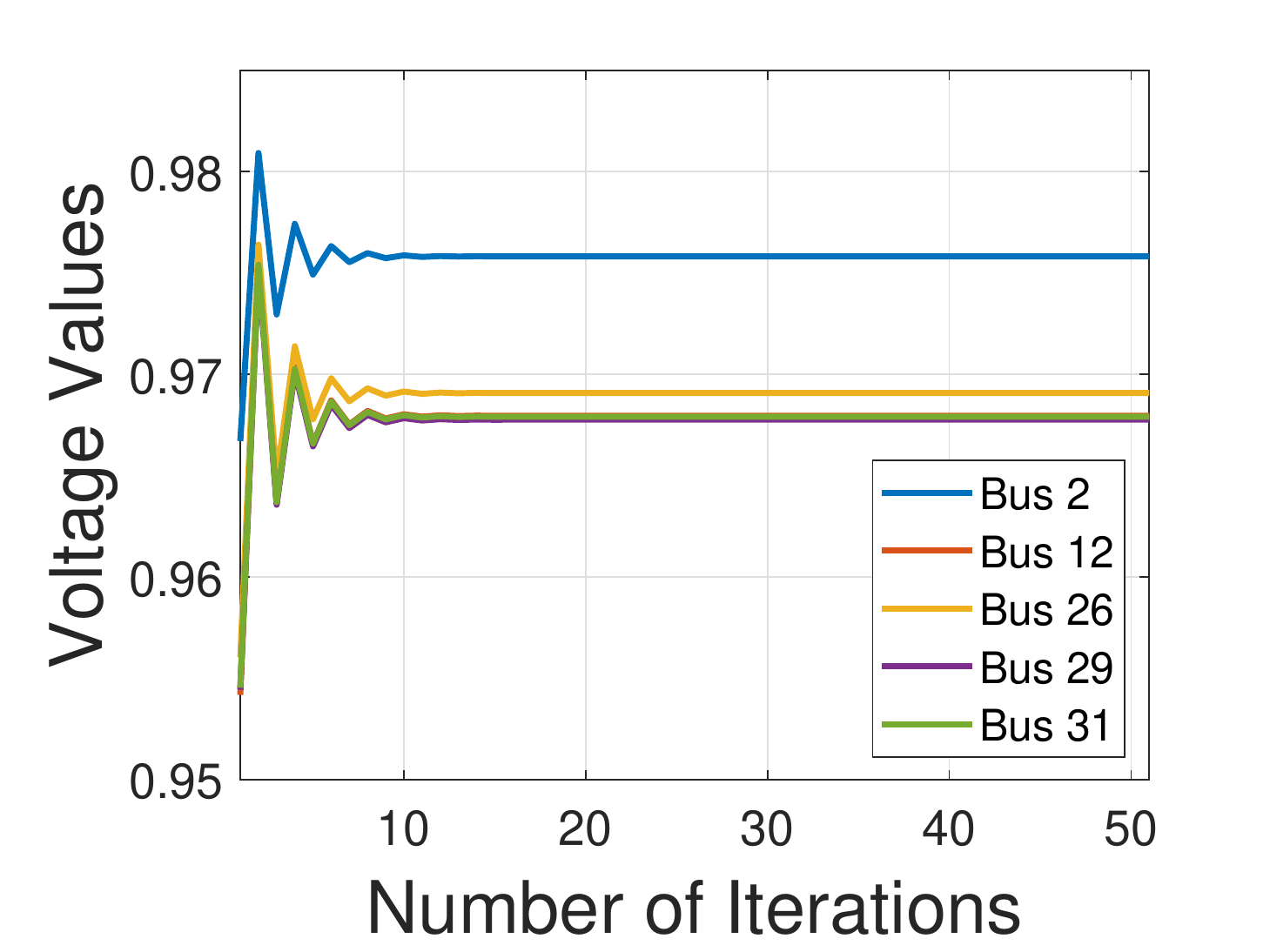}
	\end{subfigure}	
	\hspace*{\fill}
	\begin{subfigure}{0.48\columnwidth}
		\caption{Signal-taking: $\alpha_{i} = 27$.} \label{fig:taking27}\vspace{-6pt}
		\includegraphics[width=\linewidth]{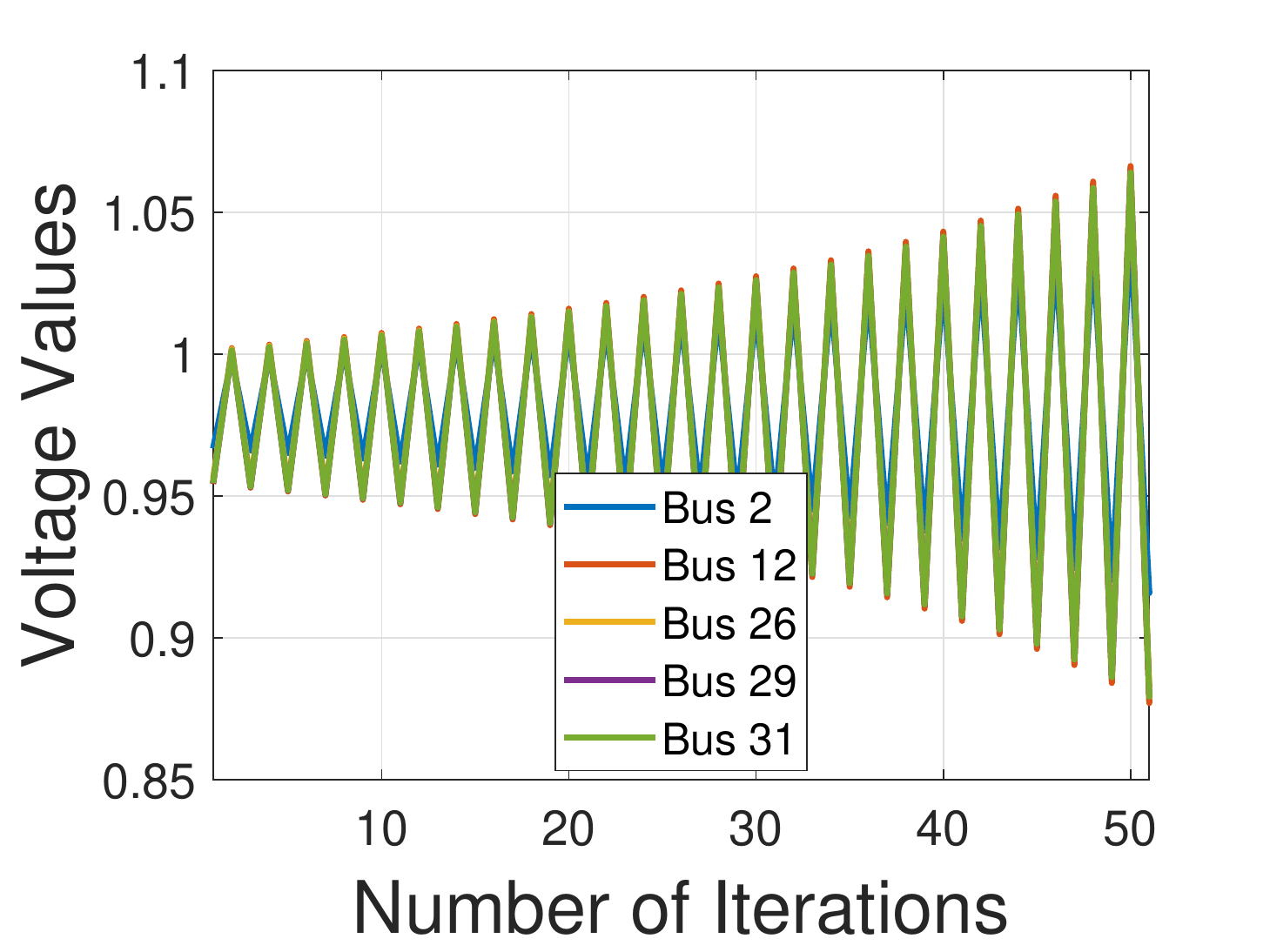}
	\end{subfigure}
	\hspace*{\fill}
	\begin{subfigure}{0.48\columnwidth}
		\caption{Signal-anticipating: $\alpha_{i}= 27$.} \label{fig:anti27}\vspace{-6pt}
		\includegraphics[width=\linewidth]{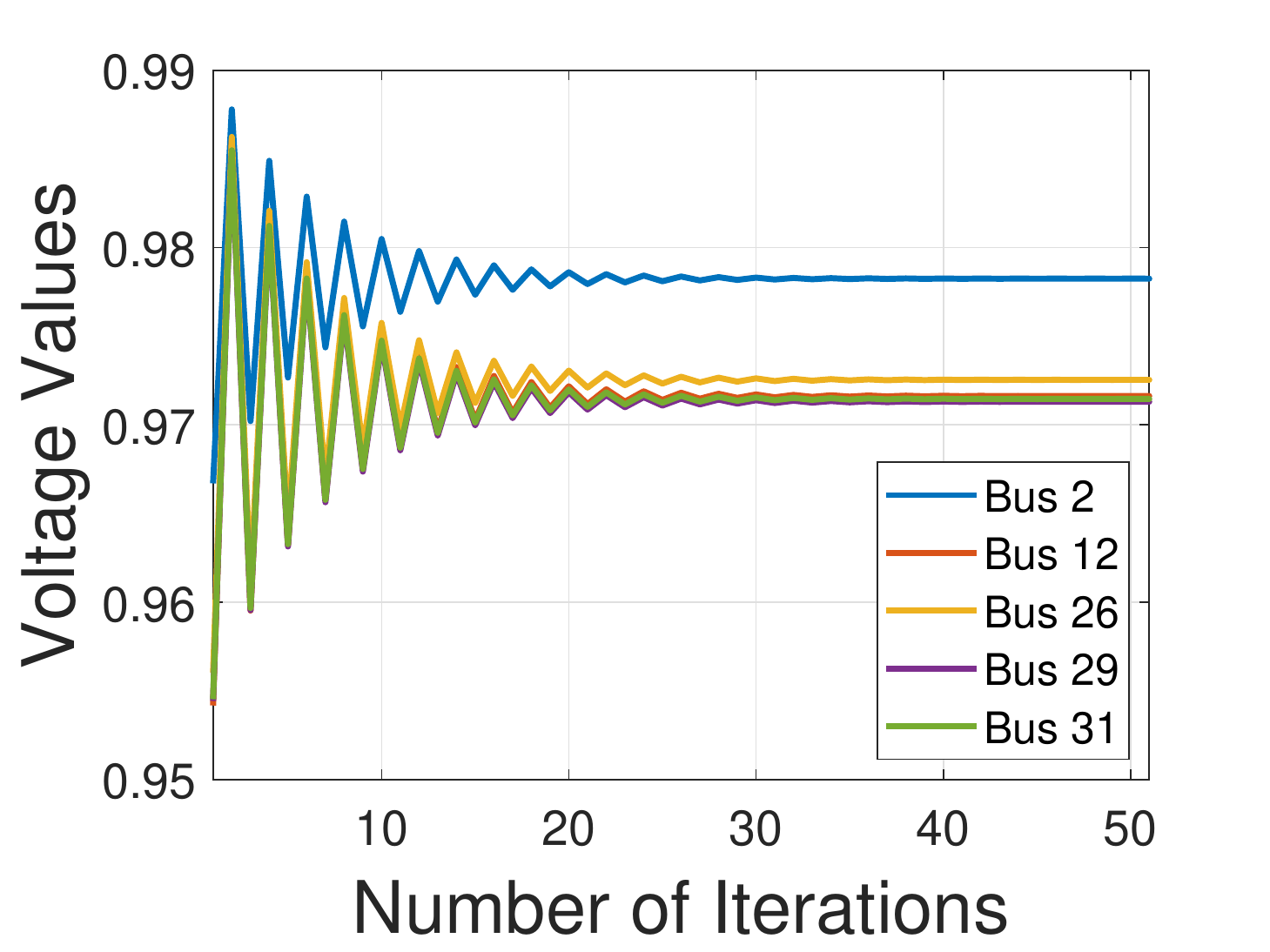}
	\end{subfigure}	
	\caption{Convergence comparison between the signal-anticipating and signal-taking voltage controls  without deadband or PV capacity constraints.} \label{fig:expern}
\end{figure}

	\begin{table*}[t]
		\centering
		
		\begin{tabular}{|c|c|c|c|c|c|c|c|c|c|c|c|c|c|c|c|c|c|}
			\hline
			\multicolumn{18}{|c|}{Network Data}\\
			\hline
			\multicolumn{4}{|c|}{Line Data}& \multicolumn{4}{|c|}{Line Data}& \multicolumn{4}{|c|}{Line Data}& \multicolumn{2}{|c|}{Load Data}& \multicolumn{2}{|c|}{Load Data}&\multicolumn{2}{|c|}{\!\!PV Generators\!\!}\\
			\hline
			From&To&R&X&From&To& R& X& From& To& R& X& Bus& Peak & Bus& Peak & Bus&\!\!Capacity\!\!\\
			Bus.&Bus.&$(\Omega)$& $(\Omega)$ & Bus. & Bus. & $(\Omega)$ & $(\Omega)$ & Bus.& Bus.& $(\Omega)$ & $(\Omega)$ & No.&  MVA& No.& MVA& No.& MW\\
			\hline
			1	&	2	&	0.259	&	0.808	&	8	&	34	&	0.244	&	0.046 	&	18	&	19	&	0.198	&	 0.046	&	11	&	 0.67	&	28	&	 0.27 	&				 &		 \\

			2	&	3	&	0.031	&	0.092	&	8	&	36	&	0.107	 &	0.031 	&	22	&	26	&	0.046	&	 0.015	&	12	&	0.45		 &	29	&	0.2 &			 2	&	 1\\

			3	&	4	&	0.046	&	0.092	&	8	&	30	&	0.076	 &	0.015 	&	22	&	23	&	0.107	&	 0.031	&	13	&	0.89	 &	31	&	0.27	  &			26	&	 2	 \\

			3	&	13	&	0.092	&	0.031	&	8	&	9	&	0.031	 &	0.031 	&	23	&	24	&	0.107	&	 0.031	&	15	&	0.07	 &	33	&	0.45	 	 &			29	&	 1.8 	 \\

			3	&	14	&	0.214	&	0.046	&	9	&	10	&	0.015	 &	0.015	&	24	&	25	&	0.061	&	 0.015	&	16	&	0.67	 &	34	&	1.34  &			31	&	 2.5 	 \\

			4	&	17	&	0.336	&	0.061	&	 9	&	37	&	0.153	 &	0.046 	&	27	&	28	&	0.046	&	 0.015	&	18	&	0.45	 &	35	&	0.13	  &			12	&	 3 	 \\

			4	&	5	&	0.107	&	0.183	&	10	&	11	&	0.107	 &	0.076 	&	28	&	29	&	0.031	&	0		&	19	&	1.23	 &	36	&	0.67	  &			&		 \\

			5	&	21	&	0.061	&	0.015	&	10	&	41	&	0.229	 &	0.122 	&	30	&	31	&	0.076	&	 0.015  &	20	&	0.45	 &	37	&	0.13	 	&			 &			 \\

			5	&	6	&	0.015	&	0.031	&	11	&	42	&	0.031	 &	0.015 	&	30	&	32	&	0.076	&	 0.046	&	21	&	0.2 &	39	&	0.45 	&			  &	 \\

			6	&	22	&	0.168	&	0.061	&	11	&	12	&	0.076	 &	0.046 	&	30	&	33	&	0.107	&	 0.015	&	23	&	0.13		 &	40	&	0.2 	&		 &	\\

			6	&	7	&	0.031	&	0.046	&	14	&	16	&	0.046	 &	0.015 	&	37	&	38	&	0.061	&	0.015	&	24	&	0.13	 &	 41	&	0.45		&		&		 \\		\cline{15-18}

			7	&	27	&	0.076	&	0.015	&	14	&	15	&	0.107	 &	0.015	&	38	&	39	&	0.061	&	0.015	&	 25	&	0.2 &	 \multicolumn{4}{c|}{$V_{base}$ = 12.35 KV}		 	\\

			7	&	8	&	0.015	&	0.015	&	17	&	18	&	0.122	 &	0.092	&	38	&	40	&	0.061	&	 0.015	&	 26	&	0.07 &	 \multicolumn{4}{c|}{$S_{base}$ = 1000 KVA} 	 	\\

			8	&	35	&	0.046	&	0.015	&	17	&	20	&	0.214	 &	0.046	&	 	&	 	&	 	&	 	&	27	&	0.13		 &	 \multicolumn{4}{c|}{$Z_{base}$ = 152.52 $\Omega$}   	 	 \\	
			
			\hline
		\end{tabular}
		\caption{Network Parameters of the SCE Circuit: Line impedances, peak spot load KVA, Capacitors and PV generation's nameplate ratings. }
		\label{data}
	\end{table*}

}
	\section{Conclusion}
	

	We have considered the signal-anticipating behavior in local voltage control for distribution systems, and shown that the signal-anticipating voltage control is the best response algorithm of a voltage control game. We characterize the Nash equilibrium of the voltage control game and establish its asymptotic global stability under the signal-anticipating voltage control. We have further introduced the notion of Price of Signal-Anticipation (PoSA) to characterize the impact of signal-anticipating control, and characterized analytically and numerically how the PoSA scales with the size, topology, and heterogeneity of the network. 	Our results show that the PoSA is upper bounded by a constant and the average PoSA per node will go to zero as the size of the network increases. This is desirable as it means that the PoSA will not be arbitrarily large, no matter what the size of the network is, and no mechanism is needed to mitigate the signal-anticipating behavior. 

	\vspace{3mm}
 	\bibliographystyle{IEEEtran}
	\bibliography{posa,power}

{	
\appendix[Properties of Reactance Matrix $X$] 


One interesting and important property of $X$ is that its inverse $X^{-1}$ has an analytical form which is strongly related to the Laplacian matrix of the inverse tree defined next.
\begin{definition}[\textbf{Inverse Tree}]
	Given the tree graph $\mathcal{T}=\{\hN \cup\{0\}, \hL\}$ with node $0$ (root node) being of degree 1, the inverse tree $\mathcal{T}'$ is defined as follows:
	\begin{itemize}
		\item[a.] $\mathcal{T}'$ and $\mathcal{T}$ have the same topology, i.e., the same sets of vertices and lines. 
		\item[b.] For each line $(i,j) \in \hL$ of $\mathcal{T}'$, its link weight is $x_{ij}' = \frac{1}{x_{ij}}.$
	\end{itemize} 
\end{definition}
See Fig.~\ref{fig:tree1} and Fig.~\ref{fig:tree2} for an illustrative example for inverse tree. 
\begin{figure}[!ht]
	\centering
	\begin{minipage}{0.4\linewidth}
		\begin{tikzpicture}[level distance=1.cm,
		level 1/.style={sibling distance=1cm},
		level 2/.style={sibling distance=1cm}]
		\tikzstyle{every node}=[circle,draw]
		
		\node [rectangle]{0 (Root)}
		child {
			node {1} 
			child {
				node {2}  	
				child {
					node {3} 
					edge from parent
					node[right,draw=none]{$c$}
				}
				edge from parent 
				node [left,draw=none]{$b$}
			}
			child[missing]
			child {
				node {4} 
				edge from parent
				node[right,draw=none]{$d$}
			}
			edge from parent 
			node[left,draw=none]{$a$}
		};
		\end{tikzpicture} 
		\caption{The original \\ tree $\mathcal{T}$.}
		\label{fig:tree1}
	\end{minipage}~
	\begin{minipage}{0.36\linewidth}
		\begin{tikzpicture}[level distance=1cm,
		level 1/.style={sibling distance=1cm},
		level 2/.style={sibling distance=1cm}]
		\tikzstyle{every node}=[circle,draw]
		
		\node [rectangle] {0 (Root)}
		child {
			node {1} 
			child {
				node {2}  	
				child {
					node {3} 
					edge from parent
					node[right,draw=none]{${1}/{c}$}
				}
				edge from parent 
				node [left,draw=none]{${1}/{b}$}
			}
			child[missing]
			child {
				node {4} 
				edge from parent
				node[right,draw=none]{${1}/{d}$}
			}
			edge from parent 
			node[left,draw=none]{${1}/{a}$}
		};
		\end{tikzpicture} 
		\caption{The inverse \\ tree $\mathcal{T}'$.}
		\label{fig:tree2}
	\end{minipage}
\end{figure}

Denote by $\mathbb{L} \in \mathbb{R}^{n \times n}$ the weighted Laplacian matrix for inverse tree $\mathcal{T}'$ excluding the root node, defined as follows
\begin{eqnarray}
\mathbb{L}_{ij}=\left\{ \begin{array}{ll}
-{1}/{x_{ij}}, & (i,j) \in \hL,i\neq j\\
\sum_{(i,k) \in \hL} {1}/{x_{ik}}, &i = j\\
0, &\text{otherwise}
\end{array}\right..\nonumber
\end{eqnarray}
Index by $1$ the direct child node of node $0$ and denote by $a$ the weight of link $(0, 1)$. We have the following result that gives an explicit form of $X^{-1}$.
\begin{lemma}\label{Laplacian}
	For any tree $\mathcal{T}$, the inverse $X^{-1}$ of the reactance matrix $X$ has the following explicit form:
	\begin{equation}\label{equ:inverse}
	\begin{aligned}
	X^{-1} = \mathbb{L} + \begin{bmatrix}
	{1}/{a}&0&\cdots\\
	0& 0 & \cdots \\
	\vdots&\vdots&\ddots\\
	\end{bmatrix}. 
	\end{aligned}
	\end{equation}
\end{lemma}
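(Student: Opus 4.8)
The plan is to produce an explicit factorization of $X$ dictated by the path structure of the tree, invert it, and then read off the entries against those of $\mathbb{L}$. First I would index each link of $\mathcal{T}$ by its endpoint farther from the root: for a non-root node $i$ let $e_i=(p(i),i)$ be the link to its parent $p(i)$, with $p(1)=0$. Let $M$ be the $n\times n$ matrix with $M_{ij}=1$ if $e_j\in\hL_i$ (equivalently, if $j$ lies on the path from $0$ to $i$) and $M_{ij}=0$ otherwise, and let $D_x=\diag\{x_{e_j}\}_{j\in\hN}$. Since $(MD_xM^\top)_{ij}=\sum_{e\in\hL_i\cap\hL_j}x_e$, this is exactly $X_{ij}$, so $X=MD_xM^\top$. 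Ordering nodes so that every parent precedes its children, $M$ is lower triangular with unit diagonal, hence invertible; I claim $M^{-1}=I-P$, where $P$ is the parent-incidence matrix $P_{ij}=\mathbf{1}[\,j=p(i)\,]$ (whose first row vanishes because $p(1)=0\notin\hN$). This is a short "telescoping along the path" verification: $(MP)_{ik}=\sum_{j:\,p(j)=k}M_{ij}$ counts the children of $k$ lying on the path to $i$, which is $1$ exactly when $k$ is a strict ancestor of $i$, i.e. $(MP)_{ik}=M_{ik}-\delta_{ik}$.

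Next I would write $X^{-1}=(I-P)^\top D_w(I-P)$ with $D_w:=D_x^{-1}=\diag\{1/x_{e_i}\}_{i\in\hN}$ and expand. A direct entrywise computation of $(I-P)^\top D_w(I-P)$ gives, on the diagonal, $(X^{-1})_{ii}=\frac{1}{x_{e_i}}+\sum_{k:\,p(k)=i}\frac{1}{x_{e_k}}$ — that is, $1/x_{p(i),i}$ plus the sum of $1/x_{i,k}$ over the children $k$ of $i$ — and off the diagonal, the only surviving contribution is when $i$ and $j$ are adjacent in $\mathcal{T}$, yielding $(X^{-1})_{ij}=-1/x_{ij}$, with $(X^{-1})_{ij}=0$ otherwise. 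Comparing with the definition of the inverse-tree Laplacian $\mathbb{L}$ (edge weights $1/x_{ij}$, root pruned): the off-diagonal entries already coincide; for $i\neq 1$ the link to $p(i)$ has a non-root endpoint and is counted in $\mathbb{L}_{ii}$, so $(X^{-1})_{ii}=\mathbb{L}_{ii}$; and for $i=1$ the link $(0,1)$ of reactance $a=x_{0,1}$ is precisely the one excluded from $\mathbb{L}$, so $(X^{-1})_{11}=\mathbb{L}_{11}+\tfrac1a$. Assembling these comparisons gives $X^{-1}=\mathbb{L}+\tfrac1a\,e_1e_1^\top$, which is \eqref{equ:inverse}. (Alternatively, one could skip the factorization and verify $X\big(\mathbb{L}+\tfrac1a e_1e_1^\top\big)=I$ directly from $X_{ij}=\sum_{(h,k)\in\hL_i\cap\hL_j}x_{hk}$, but the factorization makes the cancellations transparent.)

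The two entrywise matrix computations are routine. The only place that needs care — and the main obstacle — is the bookkeeping around the root: establishing $M^{-1}=I-P$ cleanly (the "$-1$ into the parent" pattern, with node $1$ as the exceptional row since its parent is the grounded node), and then tracking that the single link incident to the root is exactly what separates $X^{-1}$ from the Laplacian of the root-pruned inverse tree, which is what produces the rank-one correction $\tfrac1a e_1e_1^\top$. Once the indexing conventions and the treatment of node $0$ are fixed, both $X=MD_xM^\top$ and the tree-patterned form of $(I-P)^\top D_w(I-P)$ follow mechanically.
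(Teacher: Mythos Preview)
Your proof is correct and takes a genuinely different route from the paper. The paper proceeds by direct verification: it writes down the claimed entries of $X^{-1}$ and checks $(XX^{-1})_{ij}=\delta_{ij}$ through a case analysis on whether $\hL_i\cap\hL_j\subseteq\hL_{h_j}$ or $\hL_j\subset\hL_i$ (here $h_j$ denotes the parent of $j$). You instead construct the inverse by factoring $X=MD_xM^{\top}$ through the path--edge incidence matrix, observing that $M^{-1}=I-P$ has the simple ``$-1$ into the parent'' pattern, and then reading off the entries of $(I-P)^{\top}D_x^{-1}(I-P)$. Your factorization is the same one underlying the standard identity that the reduced nodal admittance matrix of a tree is the Laplacian plus a grounding term, and it explains \emph{why} $X^{-1}$ has tree-Laplacian structure rather than merely confirming it; it also makes positive definiteness of $X$ and the rank-one correction $\tfrac{1}{a}e_1e_1^{\top}$ at the grounded node fall out automatically. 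The paper's direct verification avoids introducing the auxiliary matrices $M$ and $P$ and may read as more self-contained, but your argument is cleaner once the factorization is in hand --- indeed you note the direct-verification alternative yourself.
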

\begin{proof}
	Before proving the result, notice that the reactance matrix, its inverse, and the weighted Laplacian matrix are for the tree and the inverse tree excluding the root node (node $0$), even though they contain information about the root node. 
	Accordingly, denote by $\mathbb{N}_i$, $\mathbb{C}_i$ and $h_i$ the set of neighbor nodes,  the set of child nodes and the parent node of node $i\in\cN$ in the tree $\mathcal{T}$ (and the inverse tree $\mathcal{T}^{'}$) excluding the root node. 
	Notice that 
	\begin{eqnarray}
	(X^{-1})_{ij}=\begin{cases}
	{1}/{a}+\sum_{k\in \mathbb{C}_1}{1}/({X_{kk}-X_{11}}), & \text{if }i=j=1, \\
	{1}/({X_{ii}-X_{h_i h_i}}) +\sum_{k\in \mathbb{C}_i}{1}/({X_{kk}-X_{ii}}),& \text{if }i=j\neq 1,\\
	-{1}/{|X_{ii} - X_{jj}|}, & \text{if }j \in \mathbb{N}_i,\\
	0,&\text{otherwise.}
	\end{cases}\nonumber
	\end{eqnarray}
	
	
	To prove \eqref{equ:inverse}, it is equivalent to show  $(XX^{-1})_{ij} = \delta_{ij}$, where $\delta_{ij}$ is the Kronecker delta. We first analyze $(XX^{-1})_{ii}$. When $i \neq 1$, 
	\begin{eqnarray}
	(XX^{-1})_{ii} &=& \sum_{k}X_{ik}(X^{-1})_{ki} \nonumber\\
	&=& X_{ii}(X^{-1})_{ii} + \sum_{k \in \mathbb{N}_i}X_{ik}(X^{-1})_{ki} \nonumber\\
	&=& X_{ii}(X^{-1})_{ii} + \frac{X_{h_i h_i}}{X_{h_i h_i}-X_{ii}}+ \sum_{k \in \mathbb{C}_i}\frac{X_{ii}}{X_{ii}-X_{kk}} \nonumber\\
	&=& 1. \nonumber
	\end{eqnarray}
	When $i=1$, 
	\begin{equation*}
	\begin{aligned}
	(XX^{-1})_{11} = X_{11}(X^{-1})_{11} + \sum_{k \in \mathbb{C}_1}\frac{X_{11}}{X_{11}-X_{kk}} = a\frac{1}{a} =1.
	\end{aligned}
	\end{equation*}   
	
	Similarly, for $(XX^{-1})_{ij}$ with $i\neq j$, we have
	\begin{equation*}
	\begin{aligned}
	(XX^{-1})_{ij} 
	= \frac{X_{ij}-X_{ih_j}}{X_{jj}-X_{h_j h_j}} + \sum_{k \in \mathbb{C}_j}\frac{X_{ij}-X_{ik}}{X_{kk}-X_{jj}}. 
	\end{aligned}
	\end{equation*}
	Consider the overlap $\hL_i\cap\hL_j$ between the paths from the root node to nodes $i$ and $j$. We have two cases:
	\begin{itemize}
		\item If $\hL_i\cap\hL_j \subseteq \hL_{h_j}$, then $X_{ij}=X_{ik}$ for $k\in \mathbb{N}_j$ and thus $(XX^{-1})_{ij}=0$. 
		\item Otherwise, $\hL_{j} \subset \hL_i$ and there exits a node $c_j \in \mathbb{C}_j$ such that  $\hL_{c_j} \subseteq \hL_i$. We then have $X_{ij}=X_{jj}$, $X_{ih_j}=X_{h_j h_j}$, $X_{ic_j}=X_{c_j c_j}$ and $X_{ik}=X_{jj}$ for $k\in  \mathbb{C}_j \backslash \{c_j\}$. Applying these relations, we have $(XX^{-1})_{ij}=0$.   
	\end{itemize}	 
\end{proof}

As an illustrative example, we calculate $X^{-1}$ based on Fig.~\ref{fig:tree1} and \ref{fig:tree2}.
The reactance matrix $X$ for the original tree in Fig.~\ref{fig:tree1} is
\begin{equation*}
\begin{aligned}
X = \begin{bmatrix}
a&a&a&a\\
a&a+b&a+b&a\\
a&a+b&a+b+c&a\\
a&a&a&a+d 
\end{bmatrix}.
\end{aligned}
\end{equation*}
Based on the corresponding inverse tree in Fig.~\ref{fig:tree2}, the weighted Laplacian matrix $\mathbb{L}$ of $\mathcal{T}'$ excluding node $0$ is 
\begin{equation*}
\begin{aligned}
\mathbb{L} = \begin{bmatrix}
(b+d)/bd&-1/b&0&-1/d\\
-1/b&(b+c)/bc&-1/c&0\\
0&-1/c&1/c&0\\
-1/d&0&0&1/d 
\end{bmatrix}.
\end{aligned}
\end{equation*}
By Lemma~\ref{Laplacian}, we have
\begin{equation*}
\begin{aligned}
X^{-1} = \begin{bmatrix}
(b\!+\!d)/bd\!+\!1/a&-1/b&0&-1/d\\
-1/b&(b+c)/bc&-1/c&0\\
0&-1/c&1/c&0\\
-1/d&0&0&1/d 
\end{bmatrix}.
\end{aligned}
\end{equation*}

As can be seen from Lemma \ref{Laplacian} and the above example, the inverse matrix $X^{-1}$ reveals the topology information of power network (through the Laplacian matrix $\mathbb{L}$). 
}

\end{document}